\documentclass[11pt]{article}
\usepackage{amsmath,amsthm,amssymb,amscd,fancybox,ifthen,float,epsfig,subcaption}
\usepackage[all]{xy}
\usepackage{times}
\usepackage{hyperref}

\newcommand{\mar}[1]{{\marginpar{\sffamily{\scriptsize #1}}}}
\newcommand\cle[1]{\mar{{\bf CLE}: #1}}

\floatstyle{plain}
\restylefloat{figure}

\usepackage{color}

\definecolor{gr}{rgb}   {0.,   0.8,   0. }
\definecolor{bl}{rgb}   {0.,   0.5,   1. }
\definecolor{mg}{rgb}   {0.7,  0.,    0.7}

\newcommand{\Bk}{\color{black}}

\newcommand{\Bl}{\color{black}}

\newcommand\mf{\operatorname{mf}}
\newcommand\Sc{\operatorname{sc}}
\newcommand\cf{\operatorname{cf}}
\newcommand\df{\operatorname{df}}
\newcommand\scH{{}^{\Sc}H}
\newcommand\ff{\operatorname{ff}}

\newcommand\WG{\operatorname{WG}}

\newcommand\Diff{\operatorname{Diff}}

\newcommand\pp{\operatorname{pp}}

\newcommand\sign{\operatorname{sig}}

\newcommand\WF{\operatorname{WF}}

\newcommand\In{\operatorname{in}}
\newcommand\Int{\operatorname{int}}

\newcommand\out{\operatorname{out}}

\newcommand\cT{\mathcal T}
\newcommand\obbRd{\overline{\bbR^d}}
\newcommand\obbR[1]{\overline{\bbR^{#1}}}
\newcommand\cM{\mathcal M}
\newcommand\IM{\mathcal I_M}

\newcommand\cH{\mathcal H}
\newcommand\cI{\mathcal I}

\newcommand\cR{\mathcal R}

\newcommand\bm{\boldsymbol m}
\newcommand\bM{\boldsymbol M}

\newcommand\fd{\mathfrak d}
\newcommand\ellk{\mathfrak l}

\newcommand\cF{\mathcal F}
\newcommand\cE{\mathcal E}

\newcommand\tH{\widetilde{H}}

\newcommand\tQ{\widetilde{Q}}
\newcommand\hQ{\widehat{Q}}
\newcommand\txi{\tilde{\xi}}

\newcommand\teta{\tilde{\eta}}

\newcommand\cxi{\check{\xi}}
\newcommand\cuxi{\breve{\xi}}
\newcommand\cutheta{\breve{\theta}}

\newcommand\ceta{\check{\eta}}

\newcommand\hx{\widehat{x}}

\newcommand\hN{\widehat{N}}
\newcommand\hpsi{\widehat{\psi}}

\newcommand\cA{\mathcal{A}}
\newcommand\cC{\mathcal{C}}
\newcommand\cL{\mathcal{L}}
\newcommand\fL{\mathfrak{L}}
\newcommand\cS{\mathcal{S}}

\newcommand\cD{\mathcal{D}}

\newcommand\hf{\hat{f}}

\newcommand\hH{\hat{H}}

\renewcommand\Im{\operatorname{Im}}

\newcommand\bbR{\mathbb R}

\newcommand\bbS{\mathbb S}

\newcommand\pa{\partial}

\newcommand\restrictedto{\upharpoonright}

\newcommand\supp{\operatorname{supp}}

\newcommand\CI{{\mathcal C}^{\infty}}

\newcommand\Id{\operatorname{Id}}

\newpage

\newcommand{\RR}{\mathbb{R}}

\newcommand\half{{\frac{1}{2}}}
\newcommand\ep{\epsilon}
\newcommand\epv{\varepsilon}

\newcommand{\Cinf}{C^\infty}

\newcommand\Range{\operatorname{Ran}}

\newcommand\sci{{}^{\mathrm{sc}}}
\newcommand\sct{\sci T^*}

\newcommand\scl{{\mathrm{sc}}}
\newcommand\Scl{{\mathrm{Sc}}}

\newcommand\DiffSc{\Diff_\Scl}

\newcommand\PsiSc{\Psi_\Scl}

\newcommand\Psisc{\Psi_{\scl}}

\newcommand\Hsc{H_{\scl}}

\newcommand\phit{\tilde\phi}

\newcommand\Xb{\bar X}

\newtheorem{theorem}{Theorem}[section]
\newtheorem{proposition}[theorem]{Proposition}
\newtheorem{corollary}[theorem]{Corollary}
\newtheorem{lemma}[theorem]{Lemma}

\theoremstyle{definition}
\newtheorem{definition}[theorem]{Definition}

\theoremstyle{remark}
\newtheorem{remark}[theorem]{Remark}

\numberwithin{equation}{section}

\begin{document}

\title{Radiation Conditions and Uniqueness Theorems\\ for Open Wave-guide Networks}

\author{Charles L. Epstein\footnote{ Center for Computational
  Mathematics, Flatiron Institute, 162 Fifth Avenue, New York, NY
  10010. E-mail: {cepstein@flatironinstitute.org}.},\, Rafe
  Mazzeo\footnote{Dept. of Mathematics, Stanford University, Stanford,
  CA. E-mail: rmazzeo@stanford.edu},\, Solomon Quinn\footnote{ Center
  for Computational Mathematics, Flatiron Institute, 162 Fifth Avenue,
  New York, NY 10010. E-mail: {squinn@flatironinstitute.org}.}, and
  Andr\'as Vasy\footnote{Dept. of Mathematics, Stanford University,
  Stanford, CA. E-mail: vasy@stanford.edu} } \date{October 14, 2026}

\maketitle
\begin{abstract}
We state radiation conditions and prove a variety of uniqueness
theorem for models of wave-guide networks, including acoustic
wave-guide networks, and electromagnetic wave-guide networks described by the time
harmonic Maxwell equations. To analyze Maxwell wave-guide networks we
need to establish generalizations of the Mourre estimates
from~\cite{VasyAsterisque,VasyExpDecay}. 
\end{abstract}

\centerline{\bf This paper is dedicated to the memory of Peter Lax
  (1926-2025),}
  \centerline{\bf who
  was a great inspiration to us all.}

\section{Introduction}
In a recent series of papers {\Bl a subset of the authors has} \Bk studied analytic and numerical problems
connected to scattering from open wave-guide networks. In these papers, an open
wave-guide network is modeled as a perturbation of the Laplacian on $\bbR^d$ of the
form\footnote{In this paper the Laplace operators, $\Delta$ or $\Delta_g,$ are
understood to be  non-negative.}
\begin{equation}\label{eqn1}
  H_q=\Delta-k^2(1+q(x)),
\end{equation}
where $q(x)$ is a potential whose support, outside of a ball, is
contained in \emph{channels}\footnote{This usage differs from that in
the study of $N$-body Schr\"odinger equations, where these are called
collision planes, {\Bl while channels there are given by bound states of
particular subsystems, i.e.\ wave guide modes in our setting.} \Bk}, which are cylindrical neighborhoods of a finite
collection of rays, $\cR=\{\ell_1,\dots,\ell_m\}.$ In $2d$ this could
be a model for the TM-mode in an non-magnetic (i.e. $d\mu=0$),
translation invariant, electromagnetic wave-guide, where
$k^2(1+q(x))=\omega^2\mu \epsilon(x).$ 
It is assumed that the time dependence is
harmonic, that is, the solution takes the form
\begin{equation}
  U(x,t)=e^{-i\omega t}u(x),
\end{equation}
see \cite{ChandlerZhang}.  The operator $H_q$ also gives a standard
model for an acoustic wave-guide network where $k^2(1+q(x))$ is the
reciprocal of the squared sound speed.

An operator, like that in~\eqref{eqn1}, defines a \emph{bi-infinite}
wave-guide if it is invariant under translation in some direction, say
$x\mapsto x+t\theta,$ for $t\in\bbR.$ In this case $q$ is a function of
$\pi(x)=x-\langle x,\theta\rangle\theta.$ We use the term
\emph{semi-infinite} wave-guide to refer to an operator like that
in~\eqref{eqn1}, which takes this form for $\langle x,\theta\rangle$
sufficiently large.

Let $\cC=\{\theta_1,\dots, \theta_m\}\subset\bbR^d$ be distinct
unit vectors such that there are positive numbers $\{r_j\},$ for
which:
\begin{equation}
  \ell_j=\{\alpha \theta_j:\:\alpha>r_j\}.
\end{equation}
We call these rays the \emph{channel ends}.
If we let $\obbRd$ denote the radial compactification of $\bbR^d,$
then the set $\cC,$ is identified with points on
$\pa\obbRd.$ Let $\pi_j(x)=x-\langle x,\theta_j\rangle \theta_j;$ in
the cylindrical neighborhood of $\ell_j,$ $q(x)$ is independent of the
\emph{channel radial variable} $\langle x,\theta_j\rangle$ and is only
a function of the transverse variables, that is, the image of
$\pi_j(x){\Bl \in\theta_j^\perp.} \Bk$ That is, the channel ends are semi-infinite wave-guides.

The main results of~\cite{EpMaWG3}, which are adaptations of results
in~\cite{VasyAsterisque}, give outgoing radiation conditions that
imply uniqueness for solutions of $H_qu=f,$ for $f$ with sufficiently
rapid decay. The details of the geometry underlying these sorts of
operators is described in Section~\ref{wg_geom}. Generally speaking we
call such a collection of rays, along with a compact interaction zone,
a \emph{wave-guide network geometry}, or \emph{wave-guide geometry},
and an operator whose coefficients have the dependency on variables
described above a \emph{wave-guide network operator}.

There is a great deal of mathematical, physical and engineering
literature on the subject of wave-guides,
see~\cite{Christiansen,CiraoloMagnanini2008,
  ChandlerZhang,DebevierePravica1, DebevierePravica2, JolyPoirier,
  KNH_SIAM_2005,Marcuse,MagnaniniSantosa, Nosich1994}. Most of these
papers are concerned with the bi-infinite case and perturbations of
this case. To the best of our knowledge~\cite{EpMaWG3} is the only
work that addresses radiation conditions and uniqueness questions for
anything more complicated. We employ positive commutator techniques (Mourre
estimates), which are most closely related to the study of uniqueness
results for $N$-body Schr\"odinger operators, see~\cite{CFKS,
  FroeseHerbst1, FroeseHerbst2,
  Isozaki94,Melrose94,VasyAsterisque}. As far as we are aware there
are no previous results on radiation conditions or uniqueness results
for wave-guide networks modeled by Maxwell's equations.

In this paper we give extensions of the results of~\cite{EpMaWG3} to
other physically interesting classes of equations, which are naturally
defined in a wave-guide geometry.  The results of our previous paper
rely heavily on the analysis in~\cite{VasyAsterisque} of propagation
of singularities ``at infinity'' for these sorts of equations. In that
paper and other similar works, several assumptions are made about the
form of the perturbations, which limit the applicability of these
results, excluding several physically interesting cases, in particular
Maxwell's equations. We assume that the reader is somewhat familiar
with the scattering calculus and 3-scattering calculus, at least at
the level of detail given in~\cite{EpMaWG3}. A more detailed
understanding is needed to read Appendices~\ref{MourreEstGenCase}
and~\ref{prop_est}.

 As a warm-up we obtain radiation conditions and uniqueness theorems
 for a model of acoustic wave-guide networks. Using the 3-scattering
 calculus, we can define radiations conditions that imply uniqueness
 for operators of the form
\begin{equation}
  \tH_{c,\rho}u=c(x)^2\rho(x)\nabla\cdot \rho(x)^{-1}\nabla u,
\end{equation}
where $c(x)$ is the sound-speed and $\rho(x)$ is the density of the
medium. These are assumed to take on constant positive values outside
of a wave-guide network geometry. A similar class of operators, which
are perturbations of bi-infinite waveguides is considered
in~\cite{DebevierePravica1, DebevierePravica2}.

The principal goal of this paper is to give radiation conditions for
electromagnetic wave-guide networks, which are modeled using the
time-harmonic Maxwell equations with spatially varying permittivity
and permeability. In section~\ref{s.maxwell} we consider this model:
\begin{equation}
    d\xi=i\omega\mu(x)\eta,\quad d^*\eta=-i\omega\epsilon(x)\xi,
\end{equation}
the electric field is represented by a 1-form, $\xi,$ and the magnetic
field by a 2-form, $\eta.$ The permeability, $\mu(x)$ and
permittivity, $\epsilon(x)$ are scalar functions that assume positive
constant values outside of a neighborhood of a wave-guide network
geometry, and, within the channel ends, are independent of the channel
radial variables $\{\langle x,\theta_j\rangle\}.$

To analyze this case requires a substantial generalization of known
results.  We need to allow more general first order perturbations, for
which the indicial operators in the channel ends have a term linear in
the rescaled conormal direction, i.e. the $\tau$-direction. These
terms arise from first order derivatives in the channel radial
directions.  The presence of such terms entails significant
modifications to the proof of the Mourre estimate.

Our approach to this problem has been only partially successful.  To
extend the positive commutator estimates to this case we have to
assume that the group velocities and phase velocities of the wave
guide modes have the same signs. The product
$\kappa(x)=\epsilon(x)\mu(x)$ is the reciprocal of the squared light
speed; let $\kappa_0=\sqrt{\epsilon_0\mu_0},$ be the reciprocal of the
free-space wave light-speed, and let $\kappa_j(x)$ be $\kappa(x)$
restricted to the end of the $j$th channel. The condition on the group velocities
holds if either
\begin{equation}
  \sqrt{2}\|d\log \kappa_j\|_{\infty}<\omega\kappa_0,\text{ or }
  \frac{\max\kappa_j(x)}{\kappa_0}< \sqrt{2},
\end{equation}
for all $j.$ These are, respectively, a lower bound
on the frequency, or an upper bound on the contrast ratios, which is
independent of the frequency.

This may point to limitations in our method or it may indicate new
physical phenomenon; to wit:
\begin{enumerate}
\item For small $\omega,$ or large contrast there may exist wave-guide
  modes of the form
  $$e^{i(\tau x_1-\omega t)}[\cxi(x_2,x_3),\ceta(x_2,x_3)],\quad
  \omega,\tau>0$$ which appear to be macroscopically outgoing, but are
  microlocally (at infinity) incoming, i.e. the group velocity and
  phase velocity point in opposite directions.
  \item There may exist a discrete set of frequencies, $\{\omega_j\},$
    for which the wave-guide network supports $L^2$-eigenmodes with
    positive energy.
\end{enumerate}
Of course, neither situation occurs in the scalar case, and either, or
both could be an artifact of the method used to prove uniqueness. In
the closely related context of elastic wave-guides the group velocity
and phase velocity can have opposing signs. The question of
physically correct radiation conditions then does not have a clear-cut
answer. There are many papers addressing these questions primarily in
the Russian literature, see for example~\cite{Nazarov2013}.

\subsection{Contents of the Paper}
In Section~\ref{wg_geom} we discuss the geometry that underlies
wave-guide networks and the scattering and 3-scattering
pseudodifferential operator calculi that are used to state the
radiation conditions and prove the uniqueness results. In
Section~\ref{acous_wg} we give radiation conditions for a model of
acoustic wave-guides. Perturbations of the bi-infinite case were
analyzed in~\cite{Christiansen,DebevierePravica1,
  DebevierePravica2}. In Section~\ref{s.maxwell} we begin our study of
Maxwell wave-guides. We represent the electric field as a 1-form,
$\xi$ and the magnetic field as a 2-form, $\eta.$ In this section we
introduce an elliptic system of second order equations, which
generalizes the vector Helmholtz equation, satisfied by solutions to
the time-harmonic Maxwell equations. In these equations the 1-form and
2-form terms are decoupled; these equations are described by a complex
of first order differential operators:
\begin{equation}
\fd_j: \cC^{\infty}(\bbR^3;\Lambda^j)\to
\cC^{\infty}(\bbR^3;\Lambda^{j+1}),
\, j=0,1,2,
\end{equation}
which satisfy $\fd_{j+1}\fd_j=0.$ The equations then take the form
\begin{equation}\label{eqn7.339}
  \begin{split}
    [\fd_0\fd_1^*+\fd_2^*\fd_1]\xi&=\omega^2\xi\\
     [\fd_1\fd_2^*+\fd_3^*\fd_2]\eta&=\omega^2\eta.
  \end{split}
\end{equation}

In Section~\ref{Max_rad_cond} we present the radiation conditions for
Maxwell wave-guide networks which are described in terms of the
3-scattering wave-front set. Away from the wave-guide channels the
conditions are equivalent to the Silver-M\"uller radiation
conditions. In the channels they distinguish between incoming and
outgoing wave-guide modes. To prove the uniqueness theorems we first
prove a Mourre-type estimate for the operators in~\eqref{eqn7.339},
following the microlocal method used in~\cite{VasyAsterisque}. To that
end we first need to analyze the spectral theory of  bi-infinite
wave-guide operators, and in particular the wave-guide modes they
supports. Similar results are given in~\cite{JolyPoirier}. This leads
to definitions of characteristic varieties and radial sets in the
3-scattering sense. The radiation conditions we give are phrased in
terms of these sets. The section concludes with a
proof of the radiation condition.

In Section~\ref{ss.mourre} we state the generalized Mourre estimate
needed for the uniqueness argument. This section uses the notation and
terminology from~\cite{VasyAsterisque}.  As noted above, our proof of
uniqueness only works under an assumption on the group velocities of
the relevant wave-guide modes, see Proposition~\ref{prop4}. This
condition holds if $\sqrt{2}\|d\log\kappa\|_{\infty}<\omega\kappa_0,$
or if the contrast ratios are less than $\sqrt{2},$ independently of
the frequency. We are also not able to exclude the possibility of a
discrete set of frequencies for which finite dimensional spaces of
positive energy $L^2$-eigenmodes might exist.

The paper concludes with several appendices containing proofs of the
theorems. The first appendix gives a relatively explicit, simple proof
of the Mourre estimate for wave-guide networks. The second appendix
gives a proof of the Mourre estimate for more general $N$-body
Schr\"odinger operators. Appendix~\ref{prop_est} describes
modifications of the proofs of various propagation of singularities
results from~\cite{VasyAsterisque} that are needed to handle the more
general class of operators considered here. Finally,
Appendix~\ref{app1} contains useful identities from Riemannian
geometry.

\section{Wave-guide geometry}\label{wg_geom}
While most of our applications are described for operators defined on
Euclidean space, the results hold in much greater generality. In this
section we describe the general geometric setting, taken
from~\cite{VasyAsterisque}, where our results hold, {\Bl in part as the
geometric perspective leads one to focus on the key analytic and
geometric, rather than algebraic, aspects.} \Bk

Let $X$ be a compact smooth manifold with boundary. If $s$ is a
defining function for $\pa X,$ and $(y_1,\dots,y_{d-1})$ coordinates
on a open subset $U\subset \pa X,$ then a scattering metric is one
which takes the form
\begin{equation}\label{metric0}
  g_0=\frac{ds^2}{s^4}+\frac{1}{s^2}\sum_{i,j}h_{ij}dy_idy_j,
\end{equation}
on $(0,\epsilon)\times U$ and the sum is positive definite metric on
$\pa X=\{s=0\}.$ We call such coordinates \emph{boundary adapted coordinates}.
The volume of the ball of radius $R$ grows like
$R^d,$ that is, like that of $\bbR^d.$ Indeed, the radial
compactification of $\bbR^d$ with the flat metric, which we denote by
$\obbRd,$ satisfies these hypotheses. {\Bl Recall that the radial
compactification simply glues a sphere at infinity to $\RR^d_w$ using
the product structure $(0,\infty)_r\times\bbS^{d-1}_\omega$ in polar
`coordinates', identifying $(0,1)_s\times\bbS^{d-1}_\omega$ in
$[0,1)_s\times\bbS^{d-1}_\omega$ (with $s=0$ the boundary of the
compactification) with $(1,\infty)_r\times\bbS^{d-1}_\omega$ via
$(s,\omega)\mapsto(s^{-1},\omega)$. A similar construction works at
the `large' end of a cone; thus metrics as in \eqref{metric0} are also
called {\em asymptotically conic}.} \Bk

{\Bl The locus of the phase space analysis is the scattering cotangent
bundle $T^*_{\Sc}X$,} \Bk which is trivialized near the
boundary by the approximately unit length covectors
$\left\{\frac{ds}{s^2},\frac{dy_1}{s},\dots,\frac{dy_{d-1}}{s}\right\}.$
Locally a section takes the form
\begin{equation}
\tau(s,y)\frac{ds}{s^2}+\sum_{j}\mu_j(s,y)\frac{dy_j}{s};
\end{equation}
we let $\zeta=(\tau,\mu)$ denote fiber coordinates for $T^*_{\Sc}(X)$
in a boundary adapted coordinate system. Observe that
\begin{equation}
  |\zeta|^2=\tau^2+\sum_{i,j}h_{ij}\mu_i\mu_j.
\end{equation}
We call $\tau$ the conormal direction along $\pa X.$ {\Bl Note that in
  the case of $\obbRd$, this trivialization corresponds again to
  `polar coordinates', i.e.\ with respect the decomposition
  $(0,\infty)_r\times\bbS^{d-1}$, $\frac{ds}{s^2}=-dr$, and with
  coordinates $y_j$ on the sphere, $\frac{dy_j}{s}=r^{-1}\,dy_j$. Note
also that, as familiar from writing the Euclidean metric in polar
coordinates, the standard coordinates $w_j$ give equivalently another
trivialization, in this case globally, via $dw_j$. Thus,
$T^*_{\Sc}\obbRd=\obbRd\times(\bbR^d)^*$ simply.} \Bk

On this space we define the calculi of scattering differential and 
pseudodifferential operator, $\Diff_{\Sc}^m(X)\subset \Psi^{m,l}_{\Sc}(X).$
The differential operators are easiest to describe: they are
represented locally as polynomials in the vector fields, dual to the
covectors above, $\{s^2\pa_s,s\pa_{y_1},\dots,s\pa_{y_{d-1}}\}.$ A differential
operator of order $m$ has the representation in an open neighborhood
$U$ of $\pa X,$
\begin{equation}\label{scat_diff}
  P(x,D)\restrictedto_{U}=\sum_{j=0}^m\sum_{\{\alpha:|\alpha|\leq
    m-j\}}a_{j,\alpha}(s,y)[s^2\pa_s]^j[s\pa_y]^{\alpha},
\end{equation}
where the coefficients $a_{j,\alpha}\in\cC^{\infty}(U).$ {\Bl
  Corresponding to the above description of $T^*_{\Sc}\obbRd$, which
  implies that $T_{\Sc}\obbRd=\obbRd\times\bbR^d$, with
  trivialization given by the $\pa_{w_j}$, equivalently these are vector fields,
  resp.\ differential operators, built from $\pa_{w_j}$ with
  coefficients that are smooth on the compactified space $\obbRd$.} \Bk

For the sake of simplicity we only define the scattering calculus on
$\obbRd.$ The general case follows by coordinate invariance, from the
$\obbRd$-case. This is a bi-filtered algebra, $\Psi^{m,l}_{\Sc}(\obbRd),$
where $m$ is the order of the operator, which agrees with usual order
of a pseudodifferential operator in $\bbR^d.$ The second index $l$ is
the order of vanishing (or growth) at the $\pa\obbRd.$ Let $w$ 
denote linear coordinates on $\bbR^d$ and $\xi$ dual coordinates on
the fiber of $T^*\bbR^d,$ {\Bl a function
$a(w,\xi)\in\cC^{\infty}(\bbR^d\times\bbR^d)$ defines the symbol of an
operator of order $(0,0),$ provided that, for all pairs of $d$-multi-indices,
$(\alpha,\beta),$ we have the symbolic estimates
\begin{equation}
 | \pa_w^{\alpha}\pa_{\xi}^{\beta} a(w,\xi)|\leq
 \frac{C_{\alpha\beta}}{(1+|w|)^{|\alpha|}(1+|\xi|)^{|\beta|}}.
\end{equation}
We denote the set of such functions by $S^{0,0}_{\Sc}(\obbRd).$
The space $\cC^{\infty}(\obbRd\times\obbRd)$ is a subspace of
$S^{0,0}_{\Sc}(\obbRd)$, called the space of classical symbols;
elements of this
have a one step polyhomogeneous expansion corresponding to their
Taylor series at the boundary; equivalently or more traditionally this expansion can be
phrased in powers of $r^{-1}$ with smooth coefficients on the sphere.} \Bk

The
action of the operator is defined for $f\in\cS(\bbR^d)$ by
\begin{equation}
  A(X,D)f(w)=\frac{1}{(2\pi)^d}\int_{\bbR^d}a(w,\xi)e^{i\xi\cdot w}\hf(\xi)d\xi.
\end{equation}
An operator in $\Psi^{m,l}_{\Sc}(\obbRd)$ is similarly defined by a symbol 
\begin{equation}
  a(w,\xi)=(1+|\xi|^2)^{\frac m2}(1+|w|^2)^{-\frac{l}{2}}a'(w,\xi)\text{ where }a'\in S^{0,0}_{\Sc}(\obbRd).
\end{equation}
We denote these spaces of symbols by $S^{m,l}_{\Sc}(\obbRd).$

{\Bl These operators have a principal symbol at
  $\pa(\obbRd\times\obbRd)$, i.e.\ at infinity either in position or
  in momentum, defined by
\begin{equation}
 \sigma_{m,l}(A)(w;\xi)=a(w,\xi)\mod S^{m-1,l+1};
\end{equation}
for classical symbols of order $(0,0)$ this can be identified with
restriction to $\pa(\obbRd\times\obbRd)$, and the same holds for
general classical symbols after renormalization. It is sometimes
convenient to break this up into two
principal symbols: an interior symbol
$$
\sigma_{m}(A)(w;\xi)=a(w,\xi)\mod S^{m-1,l},
$$
and a normal symbol, $\hN_l(A)$ which for classical $a$ is a function on
$T^*_{\Sc}(\obbRd)\restrictedto_{\pa\obbRd}.$ Let $(s,y)$ denote
boundary adapted coordinates. The normal symbol of a classical operator
$A\in\Psi^{m,l}_{\Sc}(\obbRd)$ is defined by restricting
$s^{-l}a(s,y;\tau,\mu)$ to $\pa\obbRd=\{s=0\}.$ }

The symbol $\sigma_m(A)$ captures the leading differential order  of the
operator; for a operator
$P\in\Diff^{m}_{\Sc}(\obbRd)$ we have, in boundary adapted coordinates,
\begin{equation}
  \sigma_m(P)(s,y;\tau,\mu)=\sum_{\{(j,\alpha):j+|\alpha|=m\}}a_j(s,y)(i\tau)^j[i\mu]^{\alpha}.
\end{equation}
The normal symbol involves lower order terms as well {\Bl but only at
  the boundary of $\obbRd,$ as it is more a
measure of order of vanishing there.} \Bk If
$P\in\Psi^{m,l}_{\Sc}(\obbRd)$ is a differential operator, then
  \begin{equation}
    \hN_l(P)(y;\tau,\mu)=\sum_{j=0}^m
    \sum_{\{\alpha:|\alpha|\leq m-j\}}[s^{-l}a_j(s,y)]\restrictedto_{s=0}(i\tau)^j[i\mu]^{\alpha}.
  \end{equation}
  For example, if $q\in\cC^{\infty}(\obbRd),$ then
  $\Delta_g+q-k^2\in\Diff^2_{\Sc}(\obbRd),$ with
  \begin{equation}
    \sigma_2(\Delta_g+q-k^2)=|\xi|_g^2\text{ and }
    \hN_0(\Delta_g+q-k^2)=|\zeta|^2+q\restrictedto_{\pa\obbRd}-k^2.
  \end{equation}

 For $m\in\bbR,$ the space $H_{\Sc}^{m,0}(\bbR^d)$ is the usual order $m$
 $L^2$-Sobolev space. Let $\rho\in\cC^{\infty}(\obbRd)$ be a global defining function for
 $\pa\obbRd;$ for $l\in\bbR$ set
\begin{equation}
  H_{\Sc}^{m,l}(\bbR^d)=\rho^lH_{\Sc}^{m,0}(\bbR^d).
\end{equation}
For all $(m,l)$ the space $H_{\Sc}^{m,l}(\bbR^d)\subset\cS'(\bbR^d).$
The Fourier transform, defined on $\cS(\bbR^d)$ by
\begin{equation}
  \cF u(\xi)=\int_{\bbR^d}e^{-i\xi\cdot w}u(w)dw
\end{equation}
extends by duality to $\cS'(\bbR^d).$ It defines isomorphisms
\begin{equation}
  \cF: H_{\Sc}^{m,l}(\bbR^d)\to H_{\Sc}^{l,m}(\bbR^d).
\end{equation}
An operator $A\in\Psi^{m,l}_{\Sc}(\obbRd)$ extends to define bounded maps
\begin{equation}
  A:H_{\Sc}^{p,q}(\bbR^d)\to H_{\Sc}^{p-m,q+l}(\bbR^d)\text{ for all }p,q\in\bbR.
\end{equation}
With these definitions in hand we can now define the scattering wave-front set,
which is used to describe the radiation conditions for Helmholtz-like operators
with short range potentials. To define radiation conditions for wave-guide operators
requires a further blow-up and a somewhat more complicated class of
pseudodifferential operators. This is because a wave-guide potential
does not extend to be a smooth function on the radial compactification $\obbRd.$

{\Bl Since we will be interested with tempered distributions $u$ in the null-space of
  elliptic operators, elliptic in the sense of the interior symbol $\sigma_m$,
  elliptic regularity implies that they are smooth
in $\bbR^d$, thus have no wave front set there. Moreover,
ellipticity at the corner $\pa(\obbRd\times\obbRd)$, still captured by
$\sigma_m$, implies the
absence of wave front set there as well. The \emph{scattering wave-front set,} $\WF_{\Sc}(u)\subset
T^*_{\Sc}(\obbRd)\restrictedto_{\pa\obbRd},$ thus gives a frequency localized
description of the growth rate of $u$ at $\pa\obbRd.$} \Bk If $\omega\in\pa\obbRd$
and there is a function $\psi\in\cC^{\infty}(\obbRd)$ with $\psi(\omega)=1,$
such that $\psi u\in\cS(\bbR^d),$ then the fiber of $T^*_{\Sc}(\obbRd)$ over
$\omega$ is not in $\WF_{\Sc}(u).$ To localize in frequency, we suppose that
there is an operator $A\in\Psi^{0,0}_{\Sc}(\obbRd)$ such that
$\hN_0(A)(\omega;\tau,\mu)=1$ so that $Au\in\cS(\bbR^d).$ In this case the point
$(\omega;\tau,\mu)\notin\WF_{\Sc}(u).$ In fact we can further refine the wave front set
using the spaces $H_{\Sc}^{m,l}(\bbR^d).$ If 
\begin{equation}
  Au\in H_{\Sc}^{m,l}(\bbR^d)\text{ then }(\omega;\tau,\mu)\notin\WF^{m,l}_{\Sc}(u).
\end{equation}
For our applications, the data, $u,$ is smooth, so the differential order,
$m,$ does not play any role. {\Bl A typical example of such an $A$
  would be the product of a Fourier (i.e.\ momentum) localizer
  $\chi=\chi(\xi)$, $\chi\in\cC^\infty_c(\bbR^d)$ and the position
  localizer $\psi=\psi(w)$, $\psi\in\cC^{\infty}(\obbRd)$ as
  above\footnote{Note that $\phi$ localizes to a compact set of
    momenta (without a compactification), while $\psi$ localizes near
    infinity of $\bbR^d$, corresponding to an asymptotically conic
    position localizer. In traditional microlocal analysis on $\bbR^d$
  the roles of $\phi$ and $\psi$ are reversed.}: $A=\cF^{-1}\chi\cF\psi$ corresponding to
  the amplitude $a(w,\xi)=\psi(w)\chi(\xi)$; in fact it suffices to test $u$ with
  pseudodifferential operators of such form as $\psi$ and $\chi$ are
  allowed to vary (with smaller support). We discuss this briefly
  in the 3-scattering context below.} \Bk

The operator $\Delta-k^2$ is classically elliptic, but its normal symbol,
$\hN_0(\Delta+k^2)=\tau^2+|\mu|^2-k^2,$ vanishes on a subset
\begin{equation}\label{eq:free-char-var}
  {\Bl \Sigma^0_{k^2}} \Bk \subset
  T^*_{\Sc}(\obbRd)\restrictedto_{\pa\obbRd},
\end{equation}
which is its
\emph{scattering characteristic variety}. In~\cite{Melrose94} it is shown that if
$u\in\cS'(\bbR^d),$ satisfies $(\Delta-k^2)u\in\cS(\bbR^d),$ then
\begin{equation}
  \WF_{\Sc}(u)\subset\Sigma^0_{k^2}=\{(\omega;\tau,\mu):\:\tau^2+|\mu|^2=k^2\}.
\end{equation}
The symbol of the Laplace operator defines a {\Bl (rescaled)} \Bk Hamiltonian vector field
$\scH_g$ on $T^*_{\Sc}(\obbRd)\restrictedto_{\pa\obbRd};$ the
scattering wave-front set is proved to be a union of complete
trajectories of this vector field, {\Bl which are called
  bi-characteristics.} \Bk The vector field vanishes at two
points in each fiber {\Bl of
  $T^*_{\Sc}(\obbRd)\restrictedto_{\pa\obbRd}$.} \Bk The unions of these points are submanifolds of
$\Sigma^0_{k^2}$ called the $+$ and $-$ radial sets:
\begin{equation}
  R^{\pm}_{k^2}=\{(\omega;\pm k,0):\omega\in\pa\obbRd\}.
\end{equation}
A complete trajectory of $\scH_g$ consists either of a closed arc in
$T^*_{\Sc}(\obbRd)\restrictedto_{\pa\obbRd}$ joining $(\omega;k,0)$ to
$(-\omega;-k,0)$ or simply a point in $R^{+}_{k^2}$ or $R^{-}_{k^2},$
see~\cite{MelroseZworski}. {\Bl The closed arcs project to (open)
  geodesic segments of length $\pi$ explaining the antipodal relation
  above; this description persists even in the geometric
  (asymptotically conic)
  generalization.} \Bk

In~\cite{Melrose94} Melrose shows that the Sommerfeld radiation
conditions are equivalent to the assumptions that either
\begin{equation}
  \WF^{*,-\frac 12}_{\Sc}(u)\cap R^{+}_{k^2}=\emptyset,\text{ or }
  \WF^{*,-\frac 12}_{\Sc}(u)\cap R^{-}_{k^2}=\emptyset.
\end{equation}
In this case $\WF(u)_{\Sc }\subset R^{-}_{k^2},$ ($\WF(u)_{\Sc }\subset
R^{+}_{k^2}$) and the solution is outgoing (incoming) in the classical
sense, {\Bl and $R^{-}_{k^2}$ is called the outgoing radial set and
$R^{+}_{k^2}$ the incoming one. Here the normalization of ``outgoing'' corresponds to the
  limiting absorption principle resolvent at spectral parameter
  (energy) $k^2+i0$, while ``incoming'' to $k^2-i0$. If
the potential $q$ is a symbol of negative order, or if it decays
sufficiently rapidly,} \Bk
then the same results apply to
$\Delta+q(w)-k^2.$ The operators  $\pa_r\pm
ik\in\Psi^{1,0}_{\Sc}(\obbRd)$ with 
\begin{equation}
  \hN_0(\pa_r\pm ik)(\omega;\tau,\mu)=-i(\tau\mp k);
\end{equation}
these symbols are non-zero except where $\tau=\pm k.$ {\Bl Here we
  take $k>0$, so $k^2+i0=(k+i0)^2$ in the spectral parameter.} The usual
outgoing Sommerfeld radiation condition is that
\begin{equation}
  (\pa_r-ik)u(r\omega)=O\left(r^{-\left(\frac{d-1}{2}+\delta\right)}\right),
\end{equation}
for a $\delta>0.$ {\Bl As $\hN_0(\pa_r- ik)(\tau)\neq 0$ for
  $\tau>0,$} \Bk we
see that this implies that
\begin{equation}
  \WF^{*,\eta-\frac 12}_{\Sc}(u)\cap R^{+}_{k^2}=\emptyset,
\end{equation}
for any $0<\eta<\delta,$ which then implies that $\WF _{\Sc }
(u)\subset R^{-}_{k^2}.$ {\Bl In fact, a more precise oscillatory
  description of $u$ is then possible, namely
  $e^{ikr}$ times a symbol, of order $-\frac{d-1}{2}$, which is
  classical if $q$ is a classical symbol of order $-2$. Note that
  $e^{i(k+i\ep)r}$ is exponentially decaying as $r\to\infty$ for
  $\ep>0$, in accordance with the limiting resolvent description above.}

{\Bl While, to minimize complexity, we do not discuss a geometrically
  generalized version of wave guide networks, we do indicate how our
  definition in
  $\bbR^d$ is placed in this geometric setting since the analytic
  framework of \cite{VasyAsterisque} is guided by the geometry. Thus,
  the wave guide network corresponds to a
finite
set of points $\cC=\{\theta_1,\dots,\theta_m\}\subset \pa X,$, via
considering $\pa X=\pa\bbR^d$ as the sphere at infinity in the radial
compactification picture,} \Bk and a collection of
  geodesic rays $\cR=\{\ell_1,\dots,\ell_m\}$ for the metric $g_0,$ with
  \begin{equation}
    \ell_j\cap\pa X=\theta_j.
  \end{equation}
{\Bl Within the compact space $X$, the support of wave-guide
  potentials $q$ only meets $\pa X$ at the $\theta_j$, and thus the
  $q$ are singular on this compactification. Thus, we need to resolve
  $\theta_j$ in order to assure that $q$ becomes smooth on this new
  space.} \Bk

 The 3-scattering {\Bl space} \Bk introduced in~\cite{VasyAsterisque} is adequate for
 our applications. To analyze perturbations that define wave guides we need to
 blow up the ends of channels, i.e. the points in $\cC.$ We denote this space
 and the blow-down map by
  $$\beta: X_{\WG}\longrightarrow X.$$
  Let
  $$\ff_j=\beta^{-1}(\theta_j)\text{ for }j=1,\dots,m,$$ denote the new front
  faces, i.e.  boundary components introduced by the blow-up, and let
  \begin{equation}
   \mf=\overline{\beta^{-1}(\pa X\setminus\cC)}, 
  \end{equation}
  denote the closure of the lift of the
  $\pa X\setminus \cC.$ In our case, the analysis is greatly simplified by the
  fact that $\cC$ consists of isolated points, and so the ``fibration'' of the
  each front-face, $\ff_j,$ consists of a single fiber isomorphic to a closed
  hemisphere {\Bl (or disk)} \Bk $S^{d-1}_+\simeq \obbR{d-1}$.

  {\Bl For the analysis near $\theta_j$, introduce coordinates
    $(x_1,x')$ on $\bbR^d$ so that in an asymptotically conic region $U_j$ where $x_1$ is sufficiently large, and
    $|x'|/|x_1|$ is sufficiently small, $q$ is independent of $x_1$,
    i.e.\ $q=q(x')$; in these coordinates $\theta_j$ corresponds to
    $(1,0,\ldots,0)$ on $\bbS^{d-1}$. Projective coordinates on $\obbRd$
    near $\theta_j$ are then given by
    \begin{equation*}
      \rho=|x_1|^{-1},\ z_i=x'_i/|x_1|,\ i=2,\ldots,d,
    \end{equation*}
    with $\rho$ the boundary defining one of these coordinates, and
    $\theta_j$ is $\{\rho=0,\ z=0\}$.
    The blow-up then
    introduces projective coordinates around $\rho=0$, $z=0$; thus, in
    any region where $z/\rho$ is bounded, one can take
    \begin{equation}\label{proj_coord}
      \rho=|x_1|^{-1},\ z_{i-1}/\rho=(x'_i/|x_1|)/|x_1|^{-1}=x'_i,\ i=2,\ldots,d.
    \end{equation}
    Note that within $U_j$ thus
the support of $q$ lies in this $z/\rho$ bounded region, i.e.\ in a
cylindrical neighborhood of the ray $\ell_j$ of the form
$[0,1)_{\rho}\times B_r(0)(z)$. Also, $s$ and $\rho$ are locally
equivalent boundary defining functions, i.e.\ they are both smooth
boundary defining functions of $X$, with their ratio a smooth positive
function (thus bounded from above and below by positive constants).
This ratio is $1$ on $\ell_j,$ but near $\ff_j$
it is
$$
\frac{(|x'|^2+x_1^2)^{-1/2}}{|x_1|^{-1}}=(|x'|^2/x_1^2+1)^{-1/2}=
(|z|^2+1)^{-1/2}=(1+\rho^2|x'|^2)^{-1/2},
$$
which is not 1. Correspondingly, {\em at} $\ff_j$, in terms of the
radial compactification coordinates discussed after \eqref{metric0}
via the polar decomposition, with standard spherical local
coordinates $y_i=x'_i/|x|$ on $\bbS^{d-1}_\omega$, we also have
\begin{equation}\label{proj_coord_2}
Y_i=\frac{y_i}{s}=z_i\ \text{on}\ \ff_j.
\end{equation}
In general, the approach of \cite{VasyAsterisque}
only uses the structure at $\ff_j$, where the ratio of these
defining functions is $1$,  but one needs to be careful if one wishes
to talk about a full neighborhood of even the interior of $\ff_j$. In
any case, for the operator in~\eqref{eqn1}, in $U_j$,
\begin{equation}
  H_q=D_{x_1}^2+\Delta_{x'}-k^2(1+q(x')),
\end{equation}
while for a more standard 3-body operator (with the potential
independent of $k$), in $U_j$
\begin{equation}\label{eq:conic-model-op}
  D_{x_1}^2+\Delta_{x'}+q(x')-k^2.
  \end{equation}
  } \Bk

The differential operators in the 3-scattering calculus take the form
in~\eqref{scat_diff}, but with coefficients that belong to
$\cC^{\infty}(\obbRd_{\WG}).$ The main point is to extend the normal symbols to
the front faces. These turn out to be operator-valued symbols, taking values in
$\Psi^{m,0}_{\Sc}(\ff_j),$ which are functions of $\tau,$ the conormal variable.
For $A\in\Psi_{3\Sc}^{m,l}(\obbRd_{\WG}),$ the normal symbol
$\hN_l(A)(\theta_j;\tau)\in\Psi^{m,0}_{\Sc}(\ff_j)$ is elliptic if this operator
has an inverse in $\Psi^{-m,0}_{\Sc}(\ff_j).$ Unlike points on $\mf,$ this is
not a pointwise condition, but rather a global condition on each front-face. We
{\Bl can} \Bk only microlocalize in the $\tau$-direction. {\Bl In the
non-degenerate situations we are interested in $\tau=0$ will not be a
locus of singularities of distributions which allows us to split the relevant part of the extension of the
scattering cotangent bundle over $\ff$ into two half spaces, $\{\tau>0\},
\{\tau<0\},$ corresponding to incoming and outgoing directions, as we
shortly explain.} \Bk

In terms of the {\Bl projective coordinates defined
  in~\eqref{proj_coord}-\eqref{proj_coord_2},} \Bk the normal symbol of
$\Delta+q-k^2$ on $\ff_j$  is the family of \emph{indicial operators}
\begin{equation}
  \hN_0(\Delta+q-k^2)(\ff_j;\tau)=\Delta_Y+q(Y)-k^2+\tau^2,
\end{equation}
where $\Delta_Y$ is the Laplace operator on $\ff_j$ defined by
$h_{ij}(0);$ {\Bl this agrees with \eqref{eq:conic-model-op} in view of the
comments preceding it.} \Bk We
extend the notions of characteristic variety and radial set to the
front faces; {\Bl note that over $\Int\mf$ we simply have
$$
\Sigma_{k^2}\restrictedto_{\Int\mf}=\Sigma^0_{k^2},
$$
with $\Sigma^0_{k^2}$ as in \eqref{eq:free-char-var}, and similarly over $\Int\mf$
bi-characteristics are the integral curves of the (rescaled) Hamiltonian vector
field $\scH_g$ of the Euclidean metric, of which the non-constant ones
(i.e.\ ones not contained in $R^0_{k^2}$ over $\Int \mf$) then project to geodesic
segments on $\Int \mf$, tending either to radial points there or to
one of the front faces $\ff_j$.
Over $\ff_j$,} \Bk the characteristic set is
\begin{equation}
  \Sigma_{k^2}\restrictedto_{\ff_j}=\{\tau:
\hN_0(\Delta+q-k^2)(\ff_j;\tau){\Bl \text{ is not invertible as an element
  of}\ \Psi^{2,0}_{\Sc}(\ff_j)} \Bk\}.
\end{equation}
{\Bl By the lack of ellipticity of $\Delta_Y+q(Y)-k^2+\tau^2$ as
  $|Y|\to\infty$ when $k^2\geq\tau^2$,
  $\hN_0(\Delta+q-k^2)(\ff_j;\tau)$ is not invertible in
  $\Psi^{2,0}_{\Sc}(\ff_j)$ in this case. Although for $k^2<\tau^2$
  the operator
  $\hN_0(\Delta+q-k^2)(\ff_j;\tau)$ is elliptic, thus has a pseudodifferential parametrix
  and is Fredholm (as a map between
  appropriate Sobolev spaces), it is not invertible
  if $k^2-\tau^2$ is a (negative) eigenvalue of $\Delta_Y+q(Y)$.
Thus,} \Bk $\Sigma_{k^2}\restrictedto_{\ff_j}$ consists of $\{\tau:\: |\tau|\leq k\}\cup\{\pm\tau_l:\: l=1,\dots,
N_j\},$ where $\{k^2-\tau_l^2\}$ are the negative eigenvalues (if there are
any) of $\Delta_Y + q(Y).$ {\Bl Note that $\{\tau:\: |\tau|\leq k\}$
  is the projection to $\bbR_\tau$ of the free characteristic set
  $\Sigma^0_{k^2}$ at $\theta_j$.} \Bk Assuming that all  $0<\tau_l,$ the radial sets over
the $\ff_j$ are
\begin{equation}
  R_{k^2}^{\pm}\restrictedto_{\ff_j}=\{\pm k{\Bl \}\cup\{} \Bk\pm\tau_{l}: l=1,\dots, N_j\};
\end{equation}
{\Bl $\{\pm k\}$ is the projection of the free radial set $R_{k^2}$ to
  $\bbR_\tau$ at $\theta_j$.} \Bk

{\Bl As described in \cite{VasyAsterisque}, maximally extended broken bi-characteristics\footnote{In the general 3-body
  setting more complicated phenomena can occur since the $\{\theta_j\}$ are
replaced by submanifolds along which propagation is possible; these
are called generalized broken bi-characteristics and are discussed more
in Section~\ref{prop_est}.} (which are not constant, i.e.\ not in
  the radial sets) are then concatenations of maximal bi-characteristic
  segments over $\Int \mf$ with the property that the projection of
  the concatenation to $\pa\obbRd$ extends to be continuous at the
  $\theta_j$ and such that for two consecutive bi-characteristic
  segments $\gamma_{i}$, $\gamma_{i+1}$, defined on $(t_{i-1},t_{i})$,
  resp.\ $(t_ i, t_{i+1})$ ending to $\theta_j$ in the forward, resp.\ backward
  direction, the $\tau$ components $\tau\circ\gamma_i$,
  $\tau\circ\gamma_{i+1}$ extend to be continuous: $\lim_{t\to
    t_i-}\tau(\gamma_i(t))=\lim_{t\to
    t_i+}\tau(\gamma_{i+1}(t))$. This corresponds to the law of
  reflection for the wave equation at boundaries or edges: tangent
  components of the momenta, which here are tangent to the $\ell_j$,
  i.e.\ just $\tau$, are conserved, while the normal component can
  change upon hitting a boundary or edge (but the total momentum needs
  to stay in the characteristic variety). Note that along the first
  segment $\gamma_0$ in the backward direction and along the last
  segment $\gamma_N$ in the forward direction $\tau$ can tend to $k$,
  resp.\ $-k$, i.e.\ the incoming, resp.\ outgoing radial sets, both
  over $\Int\mf$ and over $\cup_j\ff_j$. It is convenient to extend
  the concatenation to
  a curve $\gamma$ defined at the concatenation points $t_i$, where (with $\pi$ the bundle projection) 
  its value is $(\theta_j=\lim_{t\to t_i-}\pi(\gamma_i), \lim_{t\to
    t_i-}\tau(\gamma_i(t)))$, which agrees with the analogous limit
  from above.\footnote{It is best to define $\gamma$ directly in a
    compressed phase space as in \cite{VasyAsterisque}, but the in
    our case equivalent
    presentation we adopted here is simpler.}} \Bk

Suppose that $A\in\Psi_{3\Sc}^{0,0}(\obbRd),$ with its indicial operator, $
\hN_0(A)(\ff_j;\tau_0),$ on $\ff_j$ at $\tau_0$ invertible. If
\begin{equation}
  Au\in H_{\Sc}^{m,l}(\bbR^d), \text{ then }(\ff_j,\tau_0)\notin \WF^{m,l}_{3\Sc}(u).
\end{equation}
A formal solution, {\Bl namely} \Bk $u,$ with $(\Delta+q(x)-k^2)u\in\cS(\bbR^d)$, is outgoing if
\begin{equation}\label{eqn37.101-pre}
  \WF_{3\Sc}(u)\subset R_{k^2}^{-},
\end{equation}
and incoming if its 3-scattering wave-front set is contained in
$R_{k^2}^+.$ In fact,
to show that the solution is outgoing it suffices to show that
\begin{equation}\label{eqn37.101}
  \WF^{m,l}_{3\Sc}(u)\cap R^+_{k^2}=\emptyset,
\end{equation}
for an $l>-\frac 12,$ with an analogous result for incoming solutions.
{\Bl Indeed, this follows from the propagation of singularities
  \cite{Melrose94,VasyAsterisque}: first using that this limited
  regularity gives $\WF_{3\Sc}(u)\cap R^+_{k^2}=\emptyset$, and then
  using that any other non-radial point in the characteristic variety
  (outside which we have elliptic estimates) is on a broken
  bi-characteristic which tends to $R^+_{k^2}$ in the backward
  direction.}

If a formal solution is known to be outgoing or incoming over $\Int\mf,$ which
we assume in the sequel, then this considerably constrains the possibilities for
$\WF^{m,l}_{3\Sc}(u)\restrictedto_{\ff}.$ As noted above, the propagation
properties for $\WF^{m,l}_{3\Sc}(u)$ under the Hamiltonian vector field $\scH_g$
are well understood, see~\cite{Melrose94,VasyAsterisque}.  From this
analysis  it follows that 
\begin{equation}\label{eqn32.100}
  \WF_{3\Sc}(u)\restrictedto_{\Int\mf}\subset
R^{-}_{k^2}\text{ or }R^+_{k^2},
\end{equation}
implies that
\begin{equation}
  \WF_{3\Sc}(u)\restrictedto_{\ff}\subset [R^+_{k^2}\cup R^-_{k^2}]\restrictedto_{\ff}.
\end{equation}
{\Bl  Indeed, any other point in the characteristic set over $\ff$
 necessarily has $\tau^2<k^2$, hence there are bi-characteristic
 segments tending to it in the forward/backward direction, at least
 one of each kind must carry $\WF_{3\Sc}(u)$: see Proposition 14.1
 in~\cite{VasyAsterisque} and Section~\ref{prop_est} here for what is
 needed in the more general context of Maxwell's equations studied
 later in this paper.} \Bk  This means that to show that
 the solution is incoming, or outgoing, with this assumption, only requires a
 proof that $ \WF_{3\Sc}(u)\restrictedto_{\ff}$ is a subset of either
 $\{\tau>0\}$ or $\{\tau<0\}.$

 A simple explicit condition can be given to demonstrate such a claim. This
 condition is essentially that given\footnote{ {\Bl We remark
 that the version presented in \cite{VasyAsterisque} takes advantage
 of writing $u=\psi(H)u+(\Id-\psi(H))u$, with the second term Schwartz, for $u$ a formal solution where $\psi\equiv
 1$ near $k^2$ and compactly supported; incorporating
 $\psi(H)\in\Psi_{3\Sc}^{-\infty,0}(\obbRd)$ into  $A_{\omega, \pm}$
 makes the composite an element of
 $\Psi_{3\Sc}^{-\infty,0}(\obbRd)$; in general  $A_{\omega, \pm}$ is {\em not}
 pseudodifferential on its own as it is not symbolic in $\xi$. One adds to
 $A_{\omega,\pm}\phi(H)$ a suitable multiple of $\Id-\phi(H)$ to
 obtain a pseudodifferential operator with invertible indicial
 operator at the point in question. See also \cite[Section~5]{Vasy:Propagation-Many}.}} \Bk  in~\cite{Isozaki94}. To state this
 condition, we first define conical cut-offs. Let $\omega\in S^{d-1}$ and choose
 $R\gg 1$ and $\delta\ll 1.$ Let $\varphi_0\in\cC^{\infty}_c((-\delta,\delta)),$
 with $\varphi_0(t)=1$ for $2|t|<\delta,$ and $\varphi_1\in\cC^\infty(\bbR)$
 with $\varphi_1(t)=0,$ for $t<1,$ and $\varphi_1(t)=1$ for $t>2.$ We define the
 conic cut-off near to $\omega\in\obbRd$ setting
 \begin{equation}
   \Psi_{\omega,\delta,R}(x)=\varphi_1\left(\frac{|x|}{R}\right)\varphi_0\left(\left|\frac{x}{|x|}-\omega\right|\right)
 \end{equation}

 Now choose $0<\epsilon\ll k$ and $\chi\in\cC^{\infty}(\bbR),$ so that $\chi(t)=0$ for
 $t<-2\epsilon,$ and $\chi(t)=1$ for $t>-\epsilon.$
 For $u\in\cS'(\bbR^d)$ a
 formal solution to $(\Delta+q(x)-k^2)u\in\cS(\bbR^d)$ we define
 \begin{equation}\label{eqn41.333}
   A_{\omega, \pm}
   u(x)=\Psi_{\omega,\delta',R'}(x)\int_{\bbR^d}e^{ix\cdot
     \xi}\chi\left(\pm \frac{\xi\cdot\omega}{(1+|x|^2)^{\frac
       12}}\right)\cF(\Psi_{\omega,\delta,R}u)(\xi)d\xi.
 \end{equation}
 The operator depends on the choices made in its definition, in
 particular the positive constants, $\delta,\delta',R,R',\epsilon,$
 but we suppress this dependence in the notation.  Note that while
 $\cF(\Psi_{\omega,\delta,R}u)(\xi)$ is generally only a tempered
 distribution, it is rapidly decreasing, which makes $A_{\omega, \pm}
 u$ well defined, see~\cite[Remark 7]{EpMaWG3}. Assuming that
 $\WF_{3\Sc}(u)\restrictedto_{\Int\mf}\subset R^{-}_{k^2},$ the formal
 solution $u$ is outgoing along $\ff_j$ if there exist
 $0<\delta'<\delta,$ and $R'>R>0$ so that
\begin{equation}
  (1+r^2)^{\frac{l}{2}}A_{\theta_j, +} u\in  L^2(\bbR^d)\text{ for an }l>-\frac 12.
\end{equation}
This condition implies that
$\WF_{3\Sc}(u)\restrictedto_{\ff_j}\subset\{\tau<0\}.$ A formal
solution is outgoing if these conditions hold for all
$\theta_j\in\cC,$ along with
$\WF_{3\Sc}(u)\restrictedto_{\Int\mf}\subset R^{-}_{k^2}.$ As a
practical matter, one needs only to verify that~\eqref{eqn37.101}
holds for an $l>-\frac 12.$ For reasonably explicit solutions, these
conditions can be checked by simple computations. Several examples, in
the setting of $2d$-wave-guide networks, are given in~\cite{EpMaWG3}.
There is an analogous definition for an incoming solution, with $+$
replaced by $-.$

We take note of the following simple consequence of these results, and
the analysis in~\cite{VasyAsterisque}.
\begin{proposition}\label{prop1}
  Suppose that $u\in\cS'(\bbR^d)$ is a solution to $(\Delta-k^2(1+q))u=0$ in
  $B_R^c(0).$ If $u\in L^2(B_R^c(0))$ then $u$ vanishes in this set. 
\end{proposition}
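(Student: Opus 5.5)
The plan is to reduce the statement to the absence of positive-energy $L^2$ eigenfunctions in the exterior region, and then invoke the results of~\cite{VasyAsterisque}. These give super-exponential decay via a Froese--Herbst type argument, followed by unique continuation.

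First I will cut off. Let $\phi\in\cC^{\infty}(\bbR^d)$ vanish on $B_{R+1}(0)$ and equal $1$ outside $B_{R+2}(0)$. Set $v=\phi u$. Then $v\in L^2(\bbR^d)$ and $H_qv=[\Delta,\phi]u=f$, with $f$ compactly supported. Since $u$ is smooth in $B_R^c$ by elliptic regularity, $f\in\cC^\infty_c$. In particular $f\in\cS(\bbR^d)$, so $v$ is a formal solution. Because $v\in L^2=H^{0,0}_{\Sc}$, we have $\WF^{*,0}_{3\Sc}(v)=\emptyset$. This holds in particular on $R^+_{k^2}$ with $l=0>-\frac12$, and likewise on $R^-_{k^2}$.

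By the propagation results recalled around~\eqref{eqn37.101}, $v$ is therefore simultaneously outgoing and incoming:
\[
\WF_{3\Sc}(v)\subset R^+_{k^2}\cap R^-_{k^2}.
\]
Over $\Int\mf$ this intersection is empty. Over $\ff_j$ it is empty as well, because the radial points $\pm k$ and $\pm\tau_l$ are nonzero with opposite signs; this uses the standing nondegeneracy assumption that $\tau=0$ is not a threshold, i.e.\ $k^2$ is not an eigenvalue of $\Delta_Y-k^2q(Y)$. Hence $\WF_{3\Sc}(v)=\emptyset$, so $v\in\cS(\bbR^d)$.

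Next I will upgrade rapid polynomial decay to super-exponential decay. Here I will cite the Froese--Herbst argument as adapted in~\cite{VasyAsterisque}, which uses the Mourre estimate at energy $k^2$. Applying it to $v$ with $f$ compactly supported gives $e^{\alpha|x|}v\in L^2$ near infinity for every $\alpha>0$.

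A second step is needed because $H_q$ is $\Delta-k^2(1+q)$ rather than a fixed potential. I will regard it as a 3-body type operator with potential $-k^2q$ at energy $k^2$. The thresholds are then $\{0\}$ together with the eigenvalues of the subsystem operators $\Delta_Y-k^2q$. The Mourre estimate holds at non-threshold energies, and $k^2$ is non-threshold by assumption.

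The main obstacle is the final step: showing that super-exponential decay forces vanishing in the exterior region rather than merely at infinity. Here I will follow the Froese--Herbst convexity argument applied to $e^{\alpha\langle x\rangle}v$. As $\alpha\to\infty$ the positive commutator forces $v=0$ outside a large ball. This is where compact support of $f$ matters: the argument is applied on the region where $H_qv=0$, i.e.\ outside $B_{R+2}$.

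Finally, weak unique continuation for the second order elliptic operator $H_q$ on the connected open set $B_R^c(0)$ (taking $d\ge2$) gives $u\equiv0$ there. This requires $q$ to be, say, bounded, so that standard Carleman estimates apply.
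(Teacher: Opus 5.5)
Your proposal is correct and follows essentially the same route as the paper: $L^2$ membership gives the radial-set condition with $l=0>-\frac12$ at both $R^+_{k^2}$ and $R^-_{k^2}$, and propagation of singularities then forces $\WF_{3\Sc}$ to be empty, so the solution is Schwartz near infinity. From there, the local-at-infinity super-exponential decay and Carleman arguments of \cite{VasyAsterisque}, followed by classical unique continuation, finish the proof; your cutoff $v=\phi u$ simply makes the paper's ``local at infinity'' remark explicit, and the nondegeneracy hypothesis you invoke is automatic, since the radial points over $\ff_j$ are $\pm k$ and $\pm\tau_l$ with $\tau_l>k>0$ and $k^2>0$ lies above all thresholds, which are $\le 0$.
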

\begin{proof}
  As $u\in L^2(B_R^c(0)),$ the condition in~\eqref{eqn37.101} is
  satisfied for both $+$ and $-,$ which then implies {\Bl by the
    propagation of singularities as noted above} \Bk that
  $\WF_{3\Sc}(u)=\emptyset.$ Therefore $u$ is Schwartz in a
  neighborhood of infinity. The proof in~\cite{VasyAsterisque} that
  such a solution vanishes faster than any exponential is local at
  infinity. Moreover, the Carleman estimate {\Bl
    in~\cite{VasyAsterisque}} \Bk showing that a
  super-exponentially vanishing solution is actually zero in a
  neighborhood of infinity is also local at infinity. Hence $u$
  vanishes in an open set. The classical unique continuation theorems
  for second order elliptic operators then imply that $u$ vanishes in
  $B_R^c(0).$
\end{proof}

\section{Acoustic wave-guides}\label{acous_wg}

 In this section we recall the basic hypotheses
 from~\cite{VasyExpDecay}, and consider  operators
 $H:\cS(X;E)\to\cS(X;E),$ where $E\to X$ is a smooth vector bundle.
  
  The conditions we require on $H$ are:
  \begin{enumerate}
    \item For $j=1,\dots,m,$ the difference $g-g_0$ along $\ff_j$ is a
      section of $T^*\ff_j\otimes T^*\ff_j.$
  \item $H$ is self adjoint operator w.r.t $d\nu_{g}.$
    \item
      $H=\Delta_{g}\otimes\Id_E+V,$ where $\Delta_g$ is the metric
      Laplace operator,
           and $V$ is a first order, 3-scattering operator, which is
      formally self adjoint w.r.t. $d\nu_{g}$ and vanishes to
      infinite order on $\mf.$
      \item The indicial operators, $\hN_0(V)(\tau),$
      do not depend on $\tau.$
  \end{enumerate}

A basic example of an operator covered by these assumptions is a
model in acoustics where the sound speed and density, $c(x)$ and
$\rho(x),$ vary. So far as we know, radiation conditions for this type
of operator do not appear in the literature. This physical situation
is  modeled by the operator
$$\tH_{c,\rho}u=c(x)^2\rho(x)\nabla\cdot \rho(x)^{-1}\nabla u.$$ For
an acoustic wave-guide it is assumed that $d\rho$ and $dc$ vanish
outside a compact set, union with cylindrical neighborhoods of the
rays, $\cR.$ Moreover we assume that $\rho(x), c(x)$ are independent
of the channel radial variables outside a compact set.  A time
harmonic acoustic wave has the form $U(x,t)=e^{-i\omega t}u(x),$ which
therefore satisfies the equation
\begin{equation}
  \tH_{c,\rho}u+\omega^2u=0
\end{equation}

The papers~\cite{DebevierePravica1, DebevierePravica2} contain proofs
of the limiting absorption principle and the absence of point spectrum
embedded in the continuous spectrum, which lies in $(-\infty,0],$ for
  $\tH_{c,\rho}$ in the bi-infinite and perturbations of the
  bi-infinite case. Our interest is in a \emph{network} of
  wave-guides, which are modeled as a collection of semi-infinite
  wave-guides outside a compact set. As in the model case,
  $H=\Delta-k^2(1+q(x),$ considered in Section~\ref{wg_geom}, the
  radial sets for the operators $\tH_{c,\rho}$ are determined by the
  wave-guide modes for the bi-infinite wave-guides defined in the
  channel ends.

Suppose that $x=(x_1,x'),$ with $c, \rho$ independent of $x_1$ and
positive constants outside a compact set. A wave-guide mode for such
an operator takes the form $u(x_1,x')=e^{i\tau x_1}v(x')$ where $v$ is
an $L^2$-solution to
\begin{equation}
  \rho\nabla_{x'}\rho^{-1}\cdot\nabla_{x'} v+\frac{\omega^2}{c^2(x')}v=\tau^2v.
\end{equation}
If we let $v=\sqrt{\rho}w,$ then this equation becomes
\begin{equation}
  \sqrt{\rho}\,\nabla_{x'}\rho^{-1}\cdot\nabla_{x'}\sqrt{\rho} w+\frac{\omega^2}{c^2(x')}w=\tau^2w,
\end{equation}
which is a perturbation of $\Delta_{x'}+\omega^2c_0^{-2},$ self
adjoint w.r.t. $dx'.$ Here $c_0$ is the positive constant value assumed
by $c(x')$ outside a compact set. We can rewrite this as
\begin{equation}
   \sqrt{\rho}\,\nabla_{x'}\rho^{-1}\cdot\nabla_{x'}\sqrt{\rho}
   w+q(x')w=\left(\tau^2-\frac{\omega^2}{c_0^2}\right)w,
\end{equation}
with $q=\omega^2(c^{-2}(x')-c_0^{-2}),$ a compactly supported
potential. If $q$ is non-positive, then the spectrum of the operator
on the left is $(-\infty,0]$ and we can show that there are no
  $L^2$-eigenvalues embedded in the continuous spectrum, and therefore
  no wave-guide modes. If $q(x')>0$ for $x'$ in a set of positive
  measure, then the operator may have a finite number of positive
  eigenvalues, $\{(E^2_j,w_j):\: j=1,\dots, N\}$ such that $w_j\in
  L^2(\bbR^{d-1})$ and
\begin{equation}
   \sqrt{\rho}\,\nabla_{x'}\rho^{-1}\cdot\nabla_{x'}\sqrt{\rho}
   w_j+q(x')w_j=E_j^2w_j.
\end{equation}
In this case the frequencies
\begin{equation}
  \tau^{\pm}_j=\pm\sqrt{E_j^2+\frac{\omega^2}{c_0^2}},
\end{equation}
contribute to the radial set of this operator along the front face.

Returning to the general case of an acoustic wave-guide network, the equation we
would like to analyze is
\begin{equation}
  \tH_{c,\rho}u+\omega^2u=0.
\end{equation}
With $u=\sqrt{\rho}w,$ we rewrite this as
\begin{equation}
  H_{\rho,q}w+k_0^2w=0,
\end{equation}
where $k_0^2=\frac{\omega^2}{c^2_0},$ and 
\begin{equation}
  H_{\rho,q}w= \sqrt{\rho}\,\nabla\cdot \rho(x)^{-1}\nabla \sqrt{\rho}w+q(x)w,
\end{equation}
with
\begin{equation}
  q(x)=\omega^2\left(\frac{1}{c^2(x)}-\frac{1}{c^2_0}\right).
\end{equation}
The operator $H_{\rho,q}$ is self adjoint with respect to $dx.$

This equation fits into the framework outlined above, with
reference Laplacian $-\Delta_{g_0}.$ The operators $H_{\rho,q}$
and $-\Delta_{g_0}$ have the same second order part and therefore
\begin{equation}
  H_{\rho,q}-\Delta_{g_0}=Q_{\rho}
\end{equation}
is a first order operator that is formally self adjoint with respect to
$dx.$ It is not difficult to see that
$Q_{\rho}\in\Diff^1_{3\Sc}(\obbRd_{\WG}),$  does not involve derivatives
normal to the front faces of $\obbRd_{\WG},$ hence $\hN_0(Q_{\rho})(\tau)$
does not depend on $\tau$ near to the front faces in $\obbRd_{\WG}.$ 

On the $\mf$ the perturbation terms vanish and $\rho(x)$ is constant. Hence,
the characteristic variety over $\mf$ is as before,
\begin{equation}
  \Sigma_{k_0^2}\restrictedto_{\Int\mf}=\{(\omega;\tau,\mu):\:\omega\in\Int\mf\text{
    and }\tau^2+|\mu|^2=k_0^2\};
\end{equation}
with the radial sets given by
$R^{\pm}_{k_0^2}\restrictedto_{\Int\mf}=\{(\omega;\pm k_0,0)\}.$ We let
$(x_1,x')$ denote coordinates in which $\theta_j=(1,0,\dots,0).$ Let
\begin{equation}
 \frac{1}{x_1},\quad Y= x'
\end{equation}
be projective coordinates for $\ff_j.$ In these coordinates, the indicial operator
over $\ff_j$ is
\begin{equation}
  \hN_0(H_{\rho,q})(\ff_j;\tau)=\sqrt{\rho(Y)}\nabla_{Y}\cdot\rho^{-1}(Y)\nabla_Y\sqrt{\rho(Y)}
  +q(Y)+k_0^2-\tau^2.
\end{equation}
The continuous spectrum of the operator
$\sqrt{\rho(Y)}\nabla_{Y}\cdot\rho^{-1}(Y)\nabla_Y\sqrt{\rho(Y)}+q(Y)$ is $(-\infty,0].$ As
  noted above, there may be a finite number of positive
  eigenvalues
  $\{E^2_{j,l}:l=1,\dots,N_j\}.$ From this we conclude
  that the characteristic variety over $\ff_j$ is
  \begin{equation}
    \Sigma_{k_0^2}\restrictedto_{\ff_j}=\{|\tau|\leq k_0\}\cup\left\{\pm
    \sqrt{k_0^2+E_{j,l}^2}:\:l=1,\dots, N_j\right\}.
  \end{equation}
  As before the radial set over $\ff_j$ is
  \begin{equation}
     R^{\pm}_{k_0^2}\restrictedto_{\ff_j}=\{\tau=\pm k_0\}\cup\left\{\pm
    \sqrt{k_0^2+E_{j,l}^2}:\:l=1,\dots, N_j\right\}.
  \end{equation}
  
  In this wave-guide setting, the results in~\cite{VasyExpDecay} apply
  prove the following the following radiation condition.
  \begin{theorem}\label{thm2}
    If the metric $g,$  and the operator $H_{\rho,q}$ satisfy
    the hypotheses above, then, for $k_0\in (0,\infty),$ a solution $u$ to
    $(H_{\rho,q}+k_0^2)u=0,$ for which $\WF^{*,l}_{3\Sc}(u)\cap R^-_{k^2_0}=\emptyset,$ (or
    $\WF^{*,l}_{3\Sc}(u)\cap R^+_{k^2_0}=\emptyset$), for an $l>
    -\frac 12,$ has
    $\WF_{3\Sc}(u)\subset
    R^+_{k^2_0}$ (resp. $\WF_{3\Sc}(u)\subset
    R^-_{k^2_0}$) and therefore  $u\equiv 0.$
  \end{theorem}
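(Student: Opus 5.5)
The plan has two stages. First, a propagation-of-singularities argument upgrades the one-sided radial condition to a global statement about $\WF_{3\Sc}(u)$. Second, the uniqueness results of \cite{VasyExpDecay} force $u\equiv 0$. Throughout I will treat the case $\WF^{*,l}_{3\Sc}(u)\cap R^-_{k_0^2}=\emptyset$ with $l>-\frac12$; the other case is symmetric, with $\tau\mapsto-\tau$ and the time direction along broken bi-characteristics reversed. I will use that $H_{\rho,q}+k_0^2$ satisfies conditions (1)--(4) of this section. In particular $Q_\rho$ vanishes to infinite order on $\mf$, and its indicial operators $\hN_0(Q_\rho)(\tau)$ are independent of $\tau$. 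These are exactly the hypotheses under which the propagation and Mourre machinery of \cite{VasyAsterisque,VasyExpDecay} applies verbatim, with the wave-guide modes $\pm\sqrt{k_0^2+E_{j,l}^2}$ playing the role of the bound-state thresholds.

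\textbf{Step 1 (radial estimate).} The low-regularity radial point estimate at $R^-_{k_0^2}$ applies both over $\Int\mf$ and at the discrete points over each $\ff_j$. The assumption $\WF^{*,l}_{3\Sc}(u)\cap R^-_{k_0^2}=\emptyset$ for some $l>-\frac12$ lies above the threshold weight $-\frac12$. It therefore improves to $\WF_{3\Sc}(u)\cap R^-_{k_0^2}=\emptyset$, exactly as in the discussion following \eqref{eqn37.101}. This step needs the Mourre estimate at the relevant energies, which is why the $\tau$-independence of $\hN_0(Q_\rho)$ matters: the commutator with the radial vector field then has the same indicial structure as in the scalar $N$-body case.

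\textbf{Step 2 (propagation).} Off the characteristic set $\Sigma_{k_0^2}$, the operator is elliptic in the 3-scattering sense, so no wave-front set can occur there. For a point of $\Sigma_{k_0^2}$ that is not radial, I will follow a generalized broken bi-characteristic through it. Over $\Int\mf$ the bi-characteristics are those of the flat Laplacian. At $\ff_j$ the quantity $\tau$ is continuous and monotone along the flow, via the Mourre estimate. Such a curve tends in the forward direction to $R^-_{k_0^2}$, so propagation of singularities forward-to-backward excludes the point. This gives $\WF_{3\Sc}(u)\subset R^+_{k_0^2}$.

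\textbf{Step 3 (vanishing).} Now $u$ is an incoming solution with no outgoing component. I will apply the standard pairing argument. For a cutoff $\chi_R(r)$, compute $0=\Im\langle (H_{\rho,q}+k_0^2)u,\chi_R u\rangle$, which equals a boundary flux term. Because $\WF_{3\Sc}(u)$ lies in a single radial set, the flux has a sign and concentrates on $R^+_{k_0^2}$. Combined with the incoming radial estimate, this forces $u\in H^{*,l'}_{\Sc}$ for some $l'>-\frac12$ near $R^+_{k_0^2}$ as well. Both radial sets are then free of wave-front set above threshold, so $\WF_{3\Sc}(u)=\emptyset$ and $u$ is Schwartz. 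The main obstacle is this flux step at $\ff_j$, where the "boundary pairing" must be done channel by channel for each guided mode. I would try the commutator $i[H,\chi_R]$, whose indicial symbol at $\ff_j$ is a positive multiple of $\tau$ times projection onto the modes. Finally, I invoke the super-exponential decay theorem and Carleman estimate of \cite{VasyExpDecay}, which are valid under (1)--(4). As in the proof of Proposition~\ref{prop1}, this shows $u$ vanishes near infinity, and unique continuation for the second order elliptic operator $H_{\rho,q}$ gives $u\equiv 0$.
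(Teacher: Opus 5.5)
Your Steps 1--2 and your final step (Schwartz $\Rightarrow$ vanishing near infinity via super-exponential decay and the Carleman estimate as in Proposition~\ref{prop1}, then unique continuation) match the paper's proof. One small correction: the estimate you use in Step 1 is the \emph{high}-regularity (above-threshold) radial estimate, not the low-regularity one. The genuine gap is Step 3. The paper does not do this step by hand. It invokes Isozaki \cite[Lemma~4.5]{Isozaki94}, as presented in \cite[Proposition~17.8]{VasyAsterisque}, which uses the homogeneous equation to show $\WF^{*,l'}_{3\Sc}(u)=\emptyset$ for $l'\in(-\frac12,0)$. Your flux argument does not deliver this conclusion.

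Here is why. Take $\chi_R=\chi(r/R)$. The commutator $i[H_{\rho,q},\chi_R]$ is uniformly of decay order one, with errors of decay order two. Combine the identity $0=\langle i[H_{\rho,q},\chi_R]u,u\rangle$ with the fixed sign of $\tau$ on $R^+_{k_0^2}$ and a microlocal square-root argument. This yields only
\[
R^{-1}\|u\|^2_{L^2(\{R<|x|<2R\})}\longrightarrow 0 ,
\]
because the error terms are $O(R^{-2}\|u\|^2_{L^2(\{R<|x|<2R\})})$.

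That is an $o(1)$ gain exactly at the threshold weight, a $B^*_0$-type statement. It is not membership in $H^{*,l'}_{\Sc}$ for some $l'>-\frac12$, and that strict power gain is what the high-regularity radial estimate at $R^+_{k_0^2}$ needs as input. The ``incoming radial estimate'' you want to combine it with is the low-regularity one at $R^+_{k_0^2}$, and it never crosses $-\frac12$. Inserting a growing weight such as $\phi_R=r^{2\delta}\chi(r/R)$ does not help. The cutoff term $r^{2\delta}R^{-1}\chi'(r/R)$ has the opposite sign to $2\delta r^{2\delta-1}\chi(r/R)$ and is of comparable size, so you only get $o(R^{2\delta})$ bounds again.

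Note also that the front faces are not where the difficulty lies. By hypothesis (4), $\pa_\tau\hat H_{\ff}=2\tau>0$ at every point of $R^+_{k_0^2}$ over $\ff_j$, so the sign there is as easy as over $\mf$. The problem, crossing the threshold weight, is already present for the free Laplacian.

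To close the gap you need a quantitative input beyond $\WF_{3\Sc}(u)\subset R^+_{k_0^2}$. One option is to cite Isozaki's lemma, as the paper does. The other is to prove that the radiation defect decays a power faster than $u$, and then run the weighted pairing. Concretely, this means $(\pa_r+i\tau_0)$ applied to the part of $u$ microlocalized at a point of $R^+_{k_0^2}$ with $\tau=\tau_0$, mode by mode over $\ff_j$. This is a Legendrian/module-regularity statement that does not follow from wave-front set information alone; in the two-body case it is what Melrose's asymptotic expansion plus boundary pairing supplies \cite{Melrose94}. Once $u$ is above threshold at $R^+_{k_0^2}$, the rest of your argument goes through.
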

  \begin{remark}
    The outgoing condition on $\mf$ can be checked by evaluating the
    asymptotic decay rate of $(\pa_r-ik_0)u(r\omega);$ it is checked
    near the front faces by applying operators like those
    in~\eqref{eqn41.333}. In~\cite{VasyAsterisque,VasyExpDecay} the
    Mourre estimate, at any negative eigenvalue, is proved for
    operators like $H_{\rho,q}$. The absence of point spectrum
    embedded in the continuum is also established. Using standard
    arguments, this gives a proof of the limiting absorption principle,
    along with the fact that the limiting absorption solution to
    \begin{equation}
      (H_{\rho,q}+k_0^2)u=f\in (1+r^2)^{-\frac{l}{2}}L^2(\bbR^d),\, l>\frac 12,
    \end{equation}
    is outgoing.
  \end{remark}
  \begin{proof}
    {\Bl See the paragraph of
      \eqref{eqn37.101-pre}-\eqref{eqn37.101} for the proof that $\WF_{3\Sc}^{m,l}(u)\subset
    R^+_{k^2_0}$ (resp. $\WF ^{m,l}_{3\Sc}(u)\subset
    R^-_{k^2_0}$) for $l<-1/2$. By a result of Isozaki
  \cite[Lemma~4.5]{Isozaki94}, as presented in
  \cite[Proposition~17.8]{VasyAsterisque}, which strongly uses
    that $u$ satisfies the homogeneous equation,  for such a
  $u,$ $\WF_{3\Sc}^{m,l}(u)=\emptyset$ for  $l\in(-1/2,0).$ Hence
  the radial point estimate of \cite{VasyAsterisque} strengthens
  this to all $l$. At this point Proposition~\ref{prop1} is applicable and shows
  that $u$ vanishes near infinity. The classical unique continuation theorems
  for second order elliptic operators then imply that $u$ vanishes.}
  \end{proof}

  As a corollary of the proof in~\cite{VasyAsterisque,VasyExpDecay} of
  the uniqueness theorem we also have:
\begin{corollary}
    If $u\in L^2(B_r^c(0))$ is a solution to
    $H_{\rho,q}u+k_0^2u=0,$ for an $\omega>0,$ then $u\equiv 0.$
    Hence such an $L^2$-solution to $\tH_{c,\rho}u+\omega^2u=0$ is also
    identically zero.
\end{corollary}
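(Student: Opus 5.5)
The plan is to copy the proof of Proposition~\ref{prop1}, with $\Delta-k^2(1+q)$ replaced by $H_{\rho,q}+k_0^2$, and then to translate the result back to $\tH_{c,\rho}$. Since $u\in L^2(B_r^c(0))$, we have $(1+r^2)^{l/2}Au\in L^2$ for every $A\in\Psi^{0,0}_{3\Sc}$ microsupported near infinity, for any $l\le 0$. In particular we may take $l\in(-\tfrac12,0]$. Then $\WF^{*,l}_{3\Sc}(u)=\emptyset$, so the hypotheses of Theorem~\ref{thm2} hold simultaneously for $R^+_{k_0^2}$ and for $R^-_{k_0^2}$. The solution lives only on $B_r^c(0)$, so I first multiply by a cutoff $\phi\in\CI(\bbR^d)$, equal to $1$ near infinity and vanishing near $\overline{B_r(0)}$. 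Then $\phi u\in L^2(\bbR^d)$ and $(H_{\rho,q}+k_0^2)(\phi u)=[H_{\rho,q},\phi]u$ is compactly supported, hence Schwartz. This is because $u$ is smooth in the interior by elliptic regularity.

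The first key step applies the argument of the paragraph around \eqref{eqn37.101-pre}--\eqref{eqn37.101}, via propagation of singularities in the form valid for these operators. By hypothesis (4), $\hN_0(Q_\rho)$ is $\tau$-independent, so the results of \cite{VasyAsterisque,VasyExpDecay} apply directly. Using the incoming condition, every point of the characteristic set lies on a broken bicharacteristic emanating from $R^+_{k_0^2}$, which forces $\WF_{3\Sc}(\phi u)\subset R^-_{k_0^2}$. Using the outgoing condition gives $\WF_{3\Sc}(\phi u)\subset R^+_{k_0^2}$. Since $R^+\cap R^-=\emptyset$ over both $\mf$ and each $\ff_j$ (the $\tau$ values there have opposite signs, and $k_0>0$), we conclude $\WF_{3\Sc}(\phi u)=\emptyset$. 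Elliptic estimates handle the region off the characteristic set. Hence $u$ is Schwartz near infinity.

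The second step is the one I expect to carry the real content. We need that a solution which is Schwartz near infinity vanishes to super-exponential order, and then that the Carleman estimate forces it to vanish identically near infinity. Both results are from \cite{VasyAsterisque,VasyExpDecay}, where they are proved for exactly this class of operators (Laplacian plus a first-order 3-scattering perturbation vanishing on $\mf$, with $\tau$-independent indicial operators). The point to check is that these arguments are local at infinity and insensitive to the compactly supported error $[H_{\rho,q},\phi]u$. This was already used in Proposition~\ref{prop1}, and I would argue it the same way: the Mourre/Carleman commutator estimates are applied to $\phi u$ with weights supported where $\phi\equiv 1$. The outcome is that $u\equiv 0$ outside some large ball.

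Finally, $H_{\rho,q}+k_0^2$ is a second-order elliptic operator with scalar principal part $\rho^{1/2}\nabla\cdot\rho^{-1}\nabla\rho^{1/2}$. Its leading coefficients are Lipschitz (indeed smooth), so classical unique continuation gives $u\equiv 0$ on the connected set $B_r^c(0)$. For the second claim, let $\tilde u$ be an $L^2$-solution of $\tH_{c,\rho}\tilde u+\omega^2\tilde u=0$, and put $w=\rho^{-1/2}\tilde u$. Since $\rho$ is bounded above and below by positive constants, $w\in L^2(B_r^c(0))$. As computed above, $w$ solves $H_{\rho,q}w+k_0^2w=0$ with $k_0=\omega/c_0>0$. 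Hence $w\equiv 0$, and therefore $\tilde u\equiv 0$.
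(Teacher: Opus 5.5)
Your proposal is correct and is essentially the paper's argument: the paper proves this corollary by noting that it is essentially identical to the proof of Proposition~\ref{prop1} ($L^2$ decay gives the radiation condition at both $R^{+}_{k_0^2}$ and $R^{-}_{k_0^2}$, hence $\WF_{3\Sc}(u)=\emptyset$; then the local-at-infinity super-exponential decay and Carleman arguments of \cite{VasyAsterisque,VasyExpDecay}; then classical unique continuation). Your cutoff $\phi$, which makes $(H_{\rho,q}+k_0^2)(\phi u)$ Schwartz, and the substitution $w=\rho^{-1/2}\tilde u$ for the $\tH_{c,\rho}$ statement simply make explicit steps that the paper leaves implicit.
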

The proof of the corollary is essentially identical to the proof of Proposition~\ref{prop1}.
This result in the bi-infinite case follows from the analysis 
  in~\cite{DebevierePravica1, DebevierePravica2}.

\section{Time Harmonic Maxwell Equation}\label{s.maxwell}
A considerably more interesting case is that of electromagnetic
wave-guide networks, which are modeled by Maxwell's
equations on $\bbR^3.$ In the
time harmonic case, with time dependence $e^{-i\omega t},$ the
Maxwell equations take the form
\begin{equation}\label{eqn7}
  d\xi=i\omega\mu(x)\eta,\quad d^*\eta=-i\omega\epsilon(x)\xi,
\end{equation}
where $\xi$ is a 1-form and $\eta$ a 2-form. Here $\epsilon(x)$ is the
scalar permittivity and $\mu(x)$ the scalar permeability of the medium
at this frequency.  An immediate consequence of~\eqref{eqn7} are the
divergence equations
\begin{equation}\label{eqn13}
  d[\mu(x)\eta]=d^*[\epsilon(x)\xi]=0.
\end{equation}
Outside of a compact set, $\epsilon(x), \mu(x)$ are only non-constant
in the wave-guide channels, which are cylindrical neighborhoods of
finitely many rays, $\cR=\{\ell_1,\dots,\ell_m\}.$ Sufficiently far out
we assume that they are independent of the distance along the ray.
That is, near to $\ell_j,$ outside of large ball,
\begin{equation}
    \epsilon(x)=\epsilon(\pi_j(x)),\quad\mu(x)=\mu(\pi_j(x)).
\end{equation}
We sometime denote these pairs of functions as $\{(\epsilon_j,\mu_j)\}.$

The radiation conditions for such problems are essentially the same as
in the previous cases, however establishing uniqueness is considerably
more complicated. As we see below, the indicial operators that arise
in this setting are not of ``product type,'' and so the proof of the
Mourre estimate requires a additional assumption, which is guaranteed
hold under certain conditions on the
$\kappa_j(x)=\epsilon_j(x)\mu_j(x),$ see Proposition~\ref{prop4}. Even
with these hypotheses, we are not able to rule out the possibility
that, for a discrete set, $\{\omega_j\},$ equation~\eqref{eqn7} has
non-trivial Schwartz class, solutions.  While $L^2$-eigenfunctions at
positive energies do not arise for bi-infinite wave-guides, it remains
a tantalizing question whether Maxwell wave-guide networks can exhibit
behavior so utterly different from what arises in the scalar case.

  The uniqueness results, as stated in~\cite{VasyExpDecay}, do not
  apply directly to first order systems, but a solution to this
  equation also solves a second order elliptic PDEs. Indeed, there are
  many possible choices. We now give a construction for a second order
  equation where the form-degrees separate. As a first step we rescale setting
\begin{equation}\label{eqn14.40}
  \xi=\sqrt{\mu}\txi\text{ and }\eta=\sqrt{\epsilon}\teta,
\end{equation}
and let $\kappa(x)=\sqrt{\epsilon(x)\mu(x)}.$  In the sequel we assume that
$\kappa(x)=\kappa_0>0$ in the complement of the support of
$d\kappa.$ In the electromagnetic setting,
$1/\kappa(x)$ is the light speed. If the complement of the support of $d\kappa$
is the ``physical'' vacuum, then special relativity requires $\kappa(x)\geq \kappa_0.$

With the new unknowns we rewrite the equations
as
\begin{equation}\label{eqn4}
D^{\epsilon,\mu} \left(\begin{matrix}\txi\\\teta\end{matrix}\right)= \left(\begin{matrix} 0 &\frac{i}{\sqrt{\epsilon}}d^*\sqrt{\epsilon}\\
    \frac{-i}{\sqrt{\mu}}d\sqrt{\mu}&0\end{matrix}\right)
    \left(\begin{matrix}\txi\\\teta\end{matrix}\right)=
      \omega\kappa\left(\begin{matrix}\txi\\\teta\end{matrix}\right).
\end{equation}
Throughout this paper $d^*$ denotes the adjoint of $d$ w.r.t. to the
Euclidean metric. We define the fiber-metric on forms to be the
Euclidean metric, but let $d\nu_{w}(x)=w(x)dx$ denote a weighted Lebesgue
measure; the $L^2$-norms on 1- and 2-forms are
\begin{equation}\label{eqn17}
  \|\txi\|^2+\|\teta\|^2=\int_{\bbR^3}[|\txi(x)|^2+|\teta(x)|^2]d\nu_{\kappa}(x),
\end{equation}
where $|\cdot|^2$ are the Euclidean fiber norms. The operator
$D^{\epsilon,\mu}$ is a formally self adjoint operator with respect to this
inner product.


In the rescaled unknowns  the divergence equations become:
\begin{equation}
  d^*(\sqrt{\epsilon}\kappa\txi)=0=d(\sqrt{\mu}\kappa\teta).
\end{equation}
To include the divergence terms in the operator we define a map from 1-forms to 0-forms
\begin{equation}
\txi\mapsto  \frac{i}{\kappa\sqrt{\epsilon}}d^*(\sqrt{\epsilon}\kappa\txi),
\end{equation}
with adjoint w.r.t. $d\nu_{\kappa}$
\begin{equation}
  \varphi\mapsto -i\sqrt{\epsilon}d\left(\frac{\varphi}{\sqrt{\epsilon}}\right),
\end{equation}
and a map from 2-forms to 3-forms
\begin{equation}
  \teta\mapsto -\frac{i}{\sqrt{\mu}\kappa}d(\sqrt{\mu}\kappa\teta),
\end{equation}
with adjoint w.r.t. $d\nu_{\kappa}$
$$\psi dx\mapsto i\sqrt{\mu}d^*\left(\frac{\psi dx}{\sqrt{\mu}}\right).$$
Putting together the various pieces we define a first order operator acting on sections
of the full exterior algebra,
$E=\Lambda^0\oplus\Lambda^1\oplus\Lambda^2\oplus\Lambda^3,$ by setting
\begin{equation}\label{eqn71.336}
  \fL^{\epsilon,\mu}=\frac{1}{\kappa}\left[\begin{matrix}
      0&\frac{i}{\kappa\sqrt{\epsilon}}d^*\kappa\sqrt{\epsilon}&0&0\\
    -i\sqrt{\epsilon}d\frac{1}{\sqrt{\epsilon}}&0&\frac{i}{\sqrt{\epsilon}}d^*\sqrt{\epsilon}&0\\
      0&-\frac{i}{\sqrt{\mu}}d\sqrt{\mu}&0&i\sqrt{\mu}d^*\frac{1}{\sqrt{\mu}}\\
      0&0&-\frac{i}{\kappa\sqrt{\mu}}d\kappa\sqrt{\mu}&0
      \end{matrix}\right].
\end{equation}
This operator is formally self adjoint on sections,
$\Phi\in\cC^{\infty}_c(\bbR^3;E)$ w.r.t. the Hermitian inner product
\begin{equation}
  \int_{\bbR^3}|\Phi|^2d\nu_{\kappa^2},
\end{equation}
where $|\cdot|^2$ is the Euclidean fiber norm. We can rewrite  Maxwell's
equations in the form
\begin{equation}\label{eqn73.335}
  \fL^{\epsilon,\mu}\left[  \begin{matrix}
      0\\\txi\\\teta\\0  \end{matrix}\right]=\omega\left[  \begin{matrix}
      0\\\txi\\\teta\\0  \end{matrix}\right],
\end{equation}
which implies that
\begin{equation}\label{eqn24.30}
 \kappa^2 \fL^{\epsilon,\mu}  \fL^{\epsilon,\mu}\left[  \begin{matrix}
      0\\\txi\\\teta\\0  \end{matrix}\right]=\omega^2\kappa^2\left[  \begin{matrix}
      0\\\txi\\\teta\\0  \end{matrix}\right].
\end{equation}

The entries of $\fL^{\epsilon,\mu}$  define both a complex and,
by duality, a co-complex: let $i\fd$ denote the lower diagonal of
$\fL^{\epsilon,\mu}$ and $-i\fd^*$ its adjoint
(w.r.t. $d\nu_{\kappa^2}$), the upper diagonal, then
\begin{equation}\label{fd_cmplx}
  \begin{split}
    \fd_j:\cC^{\infty}(\bbR^3;\Lambda^{j})\longrightarrow\cC^{\infty}(\bbR^3;\Lambda^{j+1})
    &\text{ and }
    \fd_j^*:\cC^{\infty}(\bbR^3;\Lambda^{j})\longrightarrow\cC^{\infty}(\bbR^3;\Lambda^{j-1})\\
    \fd_{j+1}\fd_j=&0=\fd_{j-1}^*\fd_j^*.
    \end{split}
\end{equation}
 So acting on $j$-forms:
$$(i\fd^*-i\fd)_j^2=(\fd+\fd^*)^2_j=\fd_{j-1}\fd^*_{j}+\fd_{j+1}^*\fd_j.$$
 Multiplying by $\kappa^{-2},$ we can therefore rewrite~\eqref{eqn24.30} as
\begin{equation}\label{eqn24.45}
  (\fd+\fd^*)^2(\txi+\teta)=\omega^2(\txi+\teta).
\end{equation}
With these choices the form-degrees in the second order operator $(\fd+\fd^*)^2$ are uncoupled.

We let $\cD_j^{\epsilon,\mu}$ denote the degree $j$ part of $(\fd+\fd^*)^2.$ A calculation shows that
\begin{equation}
  \begin{split}
    \cD_1^{\epsilon,\mu}&=\frac{1}{\sqrt{\epsilon}\kappa}d^*\frac{1}{\mu}d\sqrt{\mu}+\frac{\sqrt{\epsilon}}{\kappa}
    d\frac{1}{\epsilon\kappa^2}d^*\sqrt{\epsilon}\kappa\\
     \cD_2^{\epsilon,\mu}&=\frac{1}{\sqrt{\mu}\kappa}d\frac{1}{\epsilon}d^*\sqrt{\epsilon}+\frac{\sqrt{\mu}}{\kappa}
    d^*\frac{1}{\mu\kappa^2}d\sqrt{\mu}\kappa.
  \end{split}
\end{equation}
If $\star$ denotes the Euclidean Hodge star-operator, then a simple calculation shows that
\begin{equation}\label{eqn78.333}
  \cD_2^{\epsilon,\mu}=\star \cD_1^{\mu,\epsilon}\star.
\end{equation}

The operator $H^{\epsilon,\mu}= \kappa^2
\fL^{\epsilon,\mu}\fL^{\epsilon,\mu}$ is self adjoint with respect
to the standard Riemannian measure,
\begin{equation}\label{eqn27.20}
  \begin{split}
    \int_{\bbR^3}\langle  H^{\epsilon,\mu}\Phi,\Psi\rangle dx&=
    \int_{\bbR^3}\langle  \fL^{\epsilon,\mu}\Phi, \fL^{\epsilon,\mu}\Psi\rangle \kappa^2 dx\\
       &=\int_{\bbR^3}\langle \Phi,  H^{\epsilon,\mu}\Psi\rangle dx.
    \end{split}
\end{equation}
 Note that the first line in~\eqref{eqn27.20} shows that $ \langle
 H^{\epsilon,\mu}\Phi,\Phi\rangle\geq 0$. The second order part is
 just $F_{E}=\Delta_{g_0}\otimes \Id_E,$ where $g_0$ is the flat
 metric, which is obviously self adjoint w.r.t. $dx.$ The difference
\begin{equation}
 Q^{\epsilon,\mu}= \kappa^2 \fL^{\epsilon,\mu}\fL^{\epsilon,\mu}-F_{E}
\end{equation}
is a 3-scattering, first order operator, which is evidently also self
adjoint. We define the operator
\begin{equation}\label{eqn25}
  H^{\epsilon,\mu}=
  F_{E}+Q^{\epsilon,\mu}.
\end{equation}

Recalling that $H^{\epsilon,\mu}$ preserves form degree, a pair
$(\txi,\teta)$ satisfying~\eqref{eqn4} also satisfies
\begin{equation}\label{eqn27.2}
 \kappa^2 \left( \begin{matrix}
    \cD_1^{\epsilon,\mu}&0\\0&\cD_2^{\epsilon,\mu}
  \end{matrix}\right)
  \left[ \begin{matrix} \txi\\\teta  \end{matrix}\right]=
  \omega^2\kappa^2\left[ \begin{matrix}  \txi\\\teta \end{matrix}\right].
\end{equation}
Let $\kappa_0$ denote the constant value of $\kappa(x)$ away from the
channels, and rewrite this equation as an eigenvalue equation for the
operator $\cH^{\epsilon,\mu}_{\omega},$ which is the ``Hamiltonian'' in this
context:
\begin{equation}\label{eqn27.1}
 \cH^{\epsilon,\mu}_{\omega}\left[ \begin{matrix}
     \txi\\\teta \end{matrix}\right]\overset{d}{=} \kappa^2
 \left( \begin{matrix} \cD_1^{\epsilon,\mu}&0\\0&\cD_2^{\epsilon,\mu}
  \end{matrix}\right)
  \left[ \begin{matrix} \txi\\\teta  \end{matrix}\right]
  +\omega^2(\kappa_0^2-\kappa^2) \left[ \begin{matrix} \txi\\\teta  \end{matrix}\right]=
  \omega^2\kappa_0^2\left[ \begin{matrix}  \txi\\\teta \end{matrix}\right].
\end{equation}
The $\omega^2(\kappa_0^2-\kappa^2(x))$-term is now a potential with
support near infinity within the channels, and independent of the
`radial' variables $\{\langle x,\theta_j\rangle\}.$ In many physical applications this potential
is non-positive.
\begin{remark}
  In~\eqref{eqn27.2} we give a generalized eigenvalue problem implied
  by Maxwell's equation, with an operator that is independent of
  $\omega,$ whereas in~\eqref{eqn27.1}, we formulate it as a standard
  eigenvalue problem, but in terms of an operator that also depends on
  $\omega.$ We generally use the latter formulation as it fits more
  closely with earlier work.
\end{remark}

Focusing on the 1-form part of $H^{\epsilon,\mu},$ we compute the
explicit form of the lower order terms of $\kappa^2
\cD_1^{\epsilon,\mu}.$ If we let
$$\ellk(x)=\log\kappa(x),$$
then a tedious calculation using the identities in Appendix~\ref{app1}
shows that the operator $\kappa^2\cD_1^{\epsilon,\mu}$ is given by
\begin{multline}\label{eqn32}
  \kappa^2 \cD_1^{\epsilon,\mu}\txi=\Delta_1\txi-
   2[ d\ellk  d^*\txi+d(d\ellk\lrcorner\txi)]+\\ \left[d\log(\sqrt{\epsilon}\kappa)\right]
  d\log(\sqrt{\epsilon}\kappa)\lrcorner\txi+\left[
  d\log\sqrt{\mu}\right]d\log\sqrt{\mu}\lrcorner\txi+\frac{1}{2}\left[\frac{d^*d\mu}
  {\mu}+\frac{3}{2}\frac{|d\mu|^2}{\mu^2}\right]\txi.
\end{multline}
If $v_{\alpha}$ is the vector field dual to the 1-form, $\alpha,$
w.r.t.  the Euclidean metric on $\bbR^3,$ then
$\alpha\lrcorner\beta\overset{d}{=}i_{v_{\alpha}}\beta.$ See
Appendix~\ref{app1}.

The first order perturbation terms are
$$Q_1^{\epsilon,\mu}\txi=- 2[ d\ellk  d^*\txi+d(d\ellk\lrcorner\txi)],$$
and the zeroth order terms are
\begin{multline}
  Q_0^{\epsilon,\mu}\txi=\\
  \left[d\log(\sqrt{\epsilon}\kappa)\right]
d\log(\sqrt{\epsilon}\kappa)\lrcorner\txi+[d\log\sqrt{\mu}]d\log\sqrt{\mu}\lrcorner\txi
+\frac{1}{2}\left[\frac{d^*d\mu}{\mu}+\frac{3}{2}\frac{|d\mu|^2}{\mu^2}\right]\txi.
\end{multline}
These define self adjoint
operators in $\Diff^1_{3\Sc}(\obbRd_{\WG})$ and
$\Diff^0_{3\Sc}(\obbRd_{\WG})$ respectively. The coefficients of these
operators are supported in $\supp(|d\epsilon|+|d\mu|),$ and therefore
vanish to infinite order at the free face, $\mf,$ and also in a
neighborhood of $\cup_j\mf\cap\ff_j.$

\section{Radiation Conditions for Maxwell's Equations in Wave-guides}\label{Max_rad_cond}

We begin by recalling that a solution to the free-space Maxwell equations,
\begin{equation}\label{eqn89.300}
  d\xi=i\omega\mu_0\eta,\quad d^*\eta=-i\omega\epsilon_0\xi,
\end{equation}
is also a solution to
\begin{equation}
  \Delta\xi=k_0^2\xi,\quad\Delta\eta=k_0^2\eta,
\end{equation}
where $k_0=\omega\sqrt{\epsilon_0\mu_0}.$ As the Laplacian on forms
acts as the free-space Laplacian on the components, its
scattering wave front set lies in the scattering characteristic variety of
$\Delta_0-k_0^2,$ where
\begin{equation}
  \Sigma_{\Delta_0-k_0^2}=\{(\omega;\tau,\mu):\tau^2+|\mu|^2=k_0^2\}.
\end{equation}
The radial sets are the subsets of $\Sigma_{\Delta_0-k_0^2}$ given by
\begin{equation}
 R^{0,pm}_{k_0^2}(k_0)=\{(\omega;\pm k_0, 0)\}.
\end{equation}

A free-space solution to the Maxwell equation, $(\xi,\eta),$ satisfies
the outgoing radiation condition if
\begin{equation}\label{eqn93.300}
  \WF^{*,-\frac 12}_{\Sc}(\xi,\eta)\cap R^{0,+}_{k_0^2}=\emptyset.
\end{equation}
This means that there
is an  operator $A\in\Psi^{0,0}_{\Sc}(\bbR^3)$ with normal operator
$\hN_0(A)$ non-vanishing along $R^{0,+}_{k_0^2}$ such that
\begin{equation}
  A\left(\begin{matrix}\xi\\ \eta\end{matrix}\right)\in H^{*,l}_{\Sc}(\bbR^3)
\end{equation}
for an $l>-\frac 12.$ From the analysis in~\cite{Melrose94} the
condition in~\eqref{eqn93.300} implies that
\begin{equation}
  \WF_{\Sc}(\xi,\eta)\subset R^{0,-}_{k_0^2},
\end{equation}
and the coefficients of $(\xi,\eta)$ satisfy the classical Sommerfeld radiation
condition: $(\pa_r-ik_0)u(r\omega)=O(|r|^{-2}).$

It is well known that the components of a 1-form solution,
$\xi,$ to the Helmholtz equation satisfy this radiation condition if
and only if $\xi$ satisfies the radiation condition for
the vector Helmholtz equation:
\begin{equation}\label{VHRadCond}
  i_{\hx}d\xi-d^*\xi\, \frac{x\cdot dx}{|x|}-i\omega\kappa_0\xi=O(|x|^{-2}),
\end{equation}
where $\hx=x\cdot\pa_x/|x|.$ The 2-form solution is outgoing if and
only if $\star \eta$ satisfies this condition. If $(\xi,\eta)$
satisfy~\eqref{eqn89.300}, then $d^*\xi=0$ and this condition can be
rewritten as
\begin{equation}\label{MaxRadCond}
  \sqrt{\mu_0} i_{\hx}\eta-\sqrt{\epsilon_0}\xi=O(|x|^{-2}),
\end{equation}
see~\cite{ColtonKress}.

In order to obtain radiation conditions for the
Maxwell equation with wave-guides~\eqref{eqn4}, we have introduced a
second order elliptic system,~\eqref{eqn27.2}, such that any solution
of the former is a solution to the latter. This allows us to use a
variant of the standard microlocal approach to radiation conditions
from~\cite{Melrose94,VasyAsterisque} to obtain radiation conditions in
the present case. Solutions $(\txi,\teta),$ to~\eqref{eqn4}, are also
``divergence-free'' $\fd_1^*\txi=\fd_2\teta=0.$ In this paper we give
radiation conditions for the second order systems without analyzing
the refinements like those in~\eqref{VHRadCond} and~\eqref{MaxRadCond} implied by
these additional first order equations.

The microlocal approach to radiation conditions requires that we
identify the characteristic varieties and radial sets, in the
3-scattering sense, for the operators,
$\kappa^2\cD_i^{\epsilon,\mu}+\omega^2(\kappa_0^2-\kappa^2),$ $i=1,2,$
that make up $\cH^{\epsilon,\mu}_{\omega}.$ Along the free face, $\mf,$ of
$\pa\obbRd_{\WG},$ these agree with the characteristic variety and
radial sets for $\Delta_0-k_0^2,$
\begin{equation}
  \Sigma_{\cH^{\epsilon,\mu}_{\omega},\mf}=\{(\theta;\tau,\mu):
  \theta\in\mf,\,\tau^2+|\mu|^2=k_0^2\},
\end{equation}
where $k_0=\omega\kappa_0,$ and
\begin{equation}
  R^{\pm}_{k^2_0,\mf}=\{(\theta;\tau,\mu):\theta\in\mf,\,\tau=\pm k_0,\,\mu=0\}.
\end{equation}

\subsection{The Indicial Operators of the Hamiltonian Over $\ff$}\label{nrm.symb}
 Fixing a channel, $\ell_j,$ there are coordinates so that
 $\ell_j=\{(x_1,0,0):\: x_1>r_j\}.$ The coefficients of
 $\cH^{\epsilon,\mu}_{\omega}$ in this channel are then
 $\epsilon_j(x_2,x_3),\,\mu_j(x_2,x_3);$ we set
 $\kappa_j(x_2,x_3)=\sqrt{\epsilon_j\mu_j(x_2,x_3)}.$ Near $\ff_j,$
 but away from $\ff_j\cap\mf,$ we can use $(1/x_1,x_2,x_3)$ as
 coordinates for $\overline{\bbR^3}_{\WG}.$ To define the
 characteristic variety and radial sets on the front faces we need to
 calculate the indicial operators of $\cH^{\epsilon,\mu}_{\omega}$ over $\ff_j,\,
 j=1,\dots,m.$ For each front face, this reduces to the analysis of
 the bi-infinite wave-guide  operator defined by  $\cH^{\epsilon_j,\mu_j}_{\omega}.$

We note that a Maxwell wave-guide mode for the $j$th channel is a
solution of
\begin{equation}
  \begin{split}
  e^{-i\tau x_1}d[e^{i\tau x_1}\xi(x_2,x_3)]&=i\omega\mu_j(x_2,x_3)\eta(x_2,x_3)\\
  e^{-i\tau x_1}d^*[e^{i\tau x_1}\eta(x_2,x_3)]&=-i\omega\epsilon_j(x_2,x_3)\xi(x_2,x_3),
  \end{split}
\end{equation}
with $(\xi,\eta)\in L^2(\bbR^2;\Lambda^1\oplus\Lambda^2).$  After rescaling,
with
\begin{equation}
  \cxi=\sqrt{\mu_j(x_2,x_3)}\,\xi,\quad \ceta=\sqrt{\epsilon_j(x_2,x_3)}\,\eta,
\end{equation}
such a solution
also satisfies
\begin{equation}
\cH^{\epsilon_j,\mu_j}_{\omega}\left[e^{i\tau x_1}
    \left(
  \begin{matrix}
    \cxi(x_2,x_2)\\\ceta(x_2,x_3)
  \end{matrix}\right)\right]=\omega^2\kappa_0^2e^{i\tau x_1}\left(
  \begin{matrix}
    \cxi(x_2,x_2)\\\ceta(x_2,x_3)
  \end{matrix}\right).
\end{equation}

 As the operators $\cH^{\epsilon_j,\mu_j}_{\omega}$ preserve form degree, we
 can focus on the 1-form part.  The analysis of the 2-form part is
 essentially identical. In the $(x_1,x_2,x_3)$-coordinates, let the
 coefficients of $\cxi=\cxi_1dx_1+\cxi_2dx_2+\cxi_3dx_3,$ be
 independent of $x_1,$ then the indicial operators of $ \kappa^2_j
 \cD_1^{\epsilon_j,\mu_j}$ in this channel are the operators acting on
 $\cS(\bbR^2;\Lambda^1)$ defined by
  \begin{equation}
    \hN_{\ff_j,0}(\kappa^2_j\cD_1^{\epsilon_j,\mu_j})(\tau)\cxi=e^{-i\tau
      x_1}\kappa^2_j\cD_1^{\epsilon_j,\mu_j}[e^{i\tau x_1}\cxi].
  \end{equation}
We can rewrite this as
\begin{equation}
  \hN_{\ff_j,0}(\kappa^2_j\cD_1^{\epsilon_j,\mu_j})(\tau)\cxi=
  \kappa_j^2[\fd_{0}(\tau)\fd_1^*(\tau)+\fd_2^*(\tau)\fd_1(\tau)]\cxi,
\end{equation}
where
\begin{equation}
  \fd_j(\tau)=e^{-i\tau x_1}\fd_j(\tau)e^{i\tau x_1},\quad
  \fd^*_j(\tau)=e^{-i\tau x_1}\fd^*_j(\tau)e^{i\tau x_1}
\end{equation}
are the indicial operators for the first order operators defining the
complex in~\eqref{fd_cmplx}. If $\fd_j\theta=\alpha
d[\beta\theta],$ then the indicial operator takes the form
\begin{equation}
  \fd_j(\tau)=\alpha d_{\tau}\beta,\text{ where }d_{\tau}\cxi=e^{-i\tau x_1}d[e^{i\tau x_1}\cxi],
\end{equation}
with a similar formula for the adjoints.

At a wave-guide frequency, $\tau,$ let
\begin{equation}
u_1= \omega^{-2}e^{-i\tau x_1}\fd_0\fd^*_1[ e^{i\tau x_1}\cxi]\text{ and }u_2=
\omega^{-2}e^{-i\tau x_1}\fd_2^*\fd_1[ e^{i\tau x_1}\cxi];
\end{equation}
clearly $\cxi=u_1+u_2.$ Using the fact that
$\fd^2(\tau)=[\fd^*(\tau)]^2=0,$ we see that $\fd_1(\tau)u_1=0$
(\emph{curl-free}), and $\fd_1^*(\tau)u_2,$ (\emph{divergence-free})
and therefore
\begin{equation}\label{eqn111.601}
  \fd_0(\tau)\fd^*_1(\tau) u_1=\omega^2u_1\text{ and }\fd_2^*(\tau)\fd_1(\tau) u_2=\omega^2u_2.
\end{equation}
The forms $u_1, u_2$ are orthogonal on $\bbR^2$ with respect to the
measure $\kappa_j^2dx'.$ Only divergence-free modes arise in the study
of Maxwell's equations.
The case of divergence-free solutions has been carefully analyzed
in~\cite{JolyPoirier}, under weak regularity assumptions on the
coefficients $(\epsilon,\mu).$

  The indicial operator of $\Delta_1$ is
  $[-(\pa_{x_2}^2+\pa_{x_3}^2)+\tau^2],$ and
  the first order terms at $\ff_j$ are given by
  \begin{equation}\label{eqn34.40}
    \begin{split}
  &\hN_{\ff_j,0}(Q_1^{\epsilon_j,\mu_j})(\tau)\left(
  \begin{matrix}
    \cxi_1\\\cxi_2\\\cxi_3
  \end{matrix}
  \right)=2\left(
  \begin{matrix}
    0&-i\tau\ellk_2 &-i\tau\ellk_3\\
    i\tau\ellk_2&-\pa_{x_2}[\ellk_2]&\ellk_2\pa_{x_3}-\pa_{x_2}\ellk_3\\
    i\tau\ellk_3&\ellk_3\pa_{x_2}-\pa_{x_3}\ellk_2&-\pa_{x_3}[\ellk_3]
  \end{matrix}
  \right)\left(
  \begin{matrix}
    \cxi_1\\\cxi_2\\\cxi_3
  \end{matrix}
  \right)=\\
  &2\left[\tau\left(
  \begin{matrix}
    0&-i\ellk_2 &-i\ellk_3\\
    i\ellk_2&0&0\\
    i\ellk_3&0&0
  \end{matrix}
  \right)+\left(
  \begin{matrix}
    0&0 &0\\
    0&-\pa_{x_2}[\ellk_2]&\ellk_2\pa_{x_3}-\pa_{x_2}\ellk_3\\
   0&\ellk_3\pa_{x_2}-\pa_{x_3}\ellk_2&-\pa_{x_3}[\ellk_3]
  \end{matrix}
  \right)\right]\left(
  \begin{matrix}
    \cxi_1\\\cxi_2\\\cxi_3
  \end{matrix}
  \right),
  \end{split}
\end{equation}
where $d\ellk=\ellk_2dx_2+\ellk_3dx_3.$ This is easily seen to be a self adjoint
operator.  If we write $\cxi=\cxi_{1}dx_1+\cuxi,$ then we easily show
that the linear term in $\hN_{\ff_j,0}(Q_1^{\epsilon_j,\mu_j})(\tau)$ can
be expressed as
\begin{equation}\label{lin-tau-term}
  2i\tau(\cxi_1 d\ellk-d\ellk\lrcorner \cuxi\, dx_1),
\end{equation}
which will prove useful.

If we set
\begin{equation}
  \begin{split}
  \alpha=\alpha_2dx_2+\alpha_3dx_3=d\log\sqrt{\epsilon_j}\kappa_j,&\quad
  \beta=\beta_2dx_2+\beta_3dx_3=d\log\sqrt{\mu_j}
  \text{  and }\\
  &m=\frac{1}{2}\left[\frac{d^*d\mu_j}{\mu_j}+\frac{3}{2}\frac{|d\mu_j|^2}{\mu^2_j}\right],
  \end{split}
\end{equation}
then the zeroth order term is
\begin{equation}\label{eqn36.40}
  \hN_{\ff_j,0}(Q_0^{\epsilon_j,\mu_j})(\tau)\left(
  \begin{matrix}
    \cxi_1\\\cxi_2\\\cxi_3
  \end{matrix}
  \right)=\left(
  \begin{matrix} m&0&0\\
    0& \alpha_2^2+\beta_2^2+m&\alpha_2\alpha_3+\beta_2\beta_3\\
    0&\alpha_2\alpha_3+\beta_2\beta_3&\alpha_3^2+\beta_3^2+m
\end{matrix}\right)
    \end{equation}
As noted in~\eqref{eqn78.333}, the degree 2 operator is $\kappa^2\cD_2^{\epsilon_j,\mu_j}=\star
\kappa^2\cD_1^{\mu,\epsilon}\star,$ so its analysis is essentially
identical to the degree 1 case.

To simplify the notation we write
$\tQ_i^{\epsilon_j,\mu_j}(\tau)=\hN_{\ff_j,0}(Q_i^{\epsilon_j,\mu_j})(\tau),$
for $i=0,1,$ so that
\begin{multline}
  \hN_{\ff_j,0}(\kappa^2\cD^{\epsilon_j,\mu_j}_1)(\tau)
    \cxi(x_2,x_2)=\\
  \left[\Delta^{(2)}\otimes\Id_{\Lambda^1\bbR^3}+\tQ_1^{\epsilon_j,\mu_j}(\tau)+\tQ_0^{\epsilon_j,\mu_j}
    +\tau^2\right]
    \cxi(x_2,x_2).
\end{multline}
Here $\Delta^{(2)}$ is the flat, 2-dimensional Laplacian. This operator is self adjoint
with respect to $dx_2dx_3.$ A wave-guide mode in the $j$th channel  is then
defined by an $L^2(\bbR^2)$-solution to
\begin{multline}\label{eqn48.61}
  L_{\tau,j}\cxi(x_2,x_2)\overset{d}{=}
  \left[\Delta^{(2)}\otimes\Id_{\Lambda^1\bbR^3}+\tQ_1^{\epsilon_j,\mu_j}(\tau)+
    \tQ_0^{\epsilon_j,\mu_j}+\omega^2(\kappa_0^2-\kappa_j^2)\right]
  \cxi(x_2,x_2)\\=(\omega^2\kappa_0^2-\tau^2) \cxi(x_2,x_2).
\end{multline}
 Note that the operator $\overline{L_{\tau,j}}=L_{-\tau,j},$ which
 implies that the set of wave-guide frequencies is invariant under
 $\tau\mapsto -\tau.$\footnote{We would like to thank Manas Rachh for
 suggesting this should be true.} These operators are analytic
 families of relatively compact perturbations of
 $\Delta^{(2)}\otimes\Id_{\Lambda^1\bbR^3}.$ They might have a finite
 number of non-positive $L^2$-eigenvalues, which are parameterized
 by analytic functions. This is analyzed in  the following section.

 As noted earlier, a similar, though more detailed analysis, of wave-guide modes is done
 in~\cite{JolyPoirier}. In this paper a slightly different family of
 operators is considered. Instead of studying standard eigenvalue
 problems for $L_{\tau,j},$ they study the generalized eigenvalue
 problems for the operators
 \begin{equation}
  \left[\Delta^{(2)}\otimes\Id_{\Lambda^1\bbR^3}+\tQ_1^{\epsilon_j,\mu_j}(\tau)+
    \tQ_0^{\epsilon_j,\mu_j}+\tau^2\right]
  \cxi(x_2,x_2)=\hat{\lambda}_l(\tau)\kappa_j^2 \cxi(x_2,x_2). 
 \end{equation}
 A wave-guide mode at frequency $\omega$ occurs if
 $\hat{\lambda}_l(\tau)=\omega^2.$ Of course both methods find the
 same wave-guide modes. Our approach seems better adapted to proving
 uniqueness theorems for wave-guide networks, whereas the Joly-Poirier
 approach is more incisive for studying the dependence of wave-guide
 frequencies on $\tau,$ and their dispersion relations. A very useful
 consequence of their analysis (Section 4) is that divergence-free
 modes can only exist for
\begin{equation}\label{jp_ineqal1}
  \tau<\omega\kappa_j^{\max}\text{ where }\kappa_j^{\max}=\max\{\kappa_j(x'):\:x'\in\bbR^2\}.
\end{equation}
Below we show that this is also true for curl-free modes.

\begin{remark}
  The first order term
  $$\hN_{\ff_j,0}(Q_1^{\epsilon_j,\mu_j})(\tau)=\tQ^{\epsilon_j,\mu_j}_{11}+
  \tau\tQ^{\epsilon_j,\mu_j}_{10},$$
  where $\tQ^{\epsilon_j,\mu_j}_{11}$ is a
  first order operator and $\tQ^{\epsilon_j,\mu_j}_{10}$ is zeroth order.
  Because of the term $\tau\tQ^{\epsilon_j,\mu_j}_{10},$ these operators do
  not have the product structure required in earlier versions of
  Vasy's theorem. In particular the proof of the Mourre estimate needs
  to be modified.  In Section~\ref{ss.mourre} we introduce a Mourre
  estimate for $N$-body Schr\"odinger operators that allows terms like
  $\tau\tQ^{\epsilon_j,\mu_j}_{10}$ in the indicial operators at the front
  faces. A simplified proof for the wave-guide case is given in
  Appendix~\ref{MourreEstMaxWG}. A proof of the general result,
  following the argument in~\cite{VasyAsterisque,VasyExpDecay}, is
  given in Appendix~\ref{MourreEstGenCase}.  The Mourre estimate is
  the essential step needed to establish radiation conditions for
  Maxwell wave-guides.
\end{remark}

\subsection{Wave-guide Modes}
As we show below, the $L^2$-eigenvalues of $L_{\tau,j}$ are
non-positive. From standard perturbation theory it follows that the
negative eigenvalues are parameterized by analytic functions
$\{\lambda_l(\tau)\}.$ The eigenvalues that define energies of
wave-guide modes at wave number $\omega^2\kappa_0^2$ occur when
$\lambda_l(\tau)=\omega^2\kappa_0^2-\tau^2.$ This, in turn, implies
that $|\tau|$ lies in a fixed interval $[\omega\kappa_0,\tau_{\max}].$
As one expects on physical grounds, and is in fact true,
$\tau_{\max}\leq \omega\kappa_j^{\max},$ see~\eqref{jp_ineqal1}
and~\eqref{jp_ineqal_cf}. As shown above, every wave guide mode for
the bi-infinite wave-guide operators $\cD^{\epsilon_j,\mu_j}_1,$
$\cD^{\epsilon_j,\mu_j}_2$ is the orthogonal sum of a divergence-free
mode, and a curl-free mode.
\begin{enumerate}
  \item[{\bf Curl-free modes}]
At a wave-guide frequency, $\tau,$ let $e^{i\tau x_1}\cxi_{\tau}$ be a
closed, 1-form wave-guide mode, and set $\psi=\fd_1^*(\tau)\cxi_{\tau}.$ This
function satisfies the scalar equation
$\fd_1^*(\tau)\fd_0(\tau)\psi=\omega^2\psi,$ which is equivalent to
\begin{equation}\label{eqn112.405}
  \frac{1}{\sqrt{\epsilon_j}}
  d^*_2\epsilon_j\left[d_2\frac{\psi}{\sqrt{\epsilon_j}}\right]+\omega^2(\kappa_0^2-\kappa_j^2)\psi=
    (\omega^2\kappa_0^2-\tau^2)\psi.
\end{equation}
Here and in the sequel $d_2$ indicates the exterior differential in
2-dimensions, and $d_2^*$ its adjoint w.r.t. $dx'.$ The operator on
the left is self adjoint with respect to $dx'=dx_2dx_3.$ The equation
\begin{equation}\label{eqn112.505}
  \frac{1}{\sqrt{\epsilon_j}}
  d^*_2\epsilon_j\left[d_2\frac{\psi}{\sqrt{\epsilon_j}}\right]+
  \omega^2(\kappa_0^2-\kappa_j^2)\psi=
  \lambda\psi
\end{equation}
can only have $L^2$-solutions for $\lambda\leq 0;$ hence the solutions to
\begin{equation}
  \lambda=\kappa_0^2\omega^2-\tau^2
\end{equation}
are real with $|\tau|\geq \omega\kappa_0.$ Setting
\begin{equation}
  \cxi_{\tau}=\fd_0(\tau)\psi,
\end{equation}
gives an $\fd(\tau)$-closed 1-form that satisfies
\begin{equation}
  \fd_0(\tau)\fd_1^*(\tau)\cxi_{\tau}=\omega^2\cxi_{\tau},
\end{equation}
and therefore defines a closed 1-form wave-guide mode.

The equation satisfied by the divergence of closed, 1-form modes
only depends on $\tau$ through the $\tau^2$-factor on the r.h.s. If
$\{\lambda_{jl}^{\cf},\psi_{jl}^{\cf}\}$ are the non-positive eigenvalues,
and corresponding eigenfunctions of the operator on the left
in~\eqref{eqn112.405}, then the $\tau$s that arise are given by
  \begin{equation}
    \tau^{\cf}_{jl}=\pm\sqrt{\omega^2\kappa_0^2-\lambda_{jl}^{\cf}},\,
    l=1,\dots,N_j^{\cf}.
  \end{equation}
The eigenfunctions, $\{\psi^{\cf}_{jl}\}$ are orthogonal w.r.t. the
measure $dx'.$  Integrating by parts we easily show that
\begin{equation}\label{jp_ineqal_cf}
  \tau^2\int|\psi|^2dx'\leq \omega^2\int\kappa_j^2|\psi|^2dx',
\end{equation}
which implies that~\eqref{jp_ineqal1} also holds for curl-free modes. 


\item[{\bf Divergence-free modes}]
  Setting $\cxi=\cxi_1dx_1+\cuxi;$ the form $e^{i\tau
  x_1}\cxi(x_2,x_3)$ is divergence free if
\begin{equation}\label{eqn113.400}
  i\tau\kappa_j\sqrt{\epsilon_j}\cxi_1=d_2^*[\kappa_j\sqrt{\epsilon_j}\cuxi].
\end{equation}
Using this relation, we can show that $\cuxi$-part of a
divergence-free wave-guide mode satisfies an eigenvalue equation that
is independent of $\tau;$ we get the $2\times 2$ system,
\begin{equation}\label{eqn115.600}
\cM\cuxi=  \sqrt{\mu_j}d_2^*\frac{1}{\mu_j}d_2[\sqrt{\mu_j}\cuxi]+
  \frac{1}{\sqrt{\mu_j}}d_2\frac{1}{\epsilon_j}d_2^*[\kappa_j\sqrt{\epsilon_j}\cuxi]=
  (\omega^2\kappa_j^2-\tau^2)\cuxi.
\end{equation}
The first term on the l.h.s. of~\eqref{eqn115.600} is self adjoint
w.r.t. $dx',$ the second term w.r.t. $\kappa_j^2dx'.$ As yet we have
not found a self adjoint equation, independent of $\tau,$ satisfied by
some part of a divergence-free wave-guide mode.

We rewrite equation~\eqref{eqn115.600} as
\begin{equation}\label{eqn114.400}
  \cM\cuxi+\omega^2(\kappa_0^2-\kappa_j^2)\cuxi=\lambda\cuxi.
\end{equation}
For $\lambda\notin [0,\infty),$ let
$R_{\lambda}=(\Delta^{(2)}-\lambda)^{-1};$ we can
rewrite~\eqref{eqn114.400} as
\begin{equation}\label{eqn115.400}
  (\Id+R_{\lambda}P)\left(
  \begin{matrix}
    \cxi_2\\\cxi_3
  \end{matrix}
\right)=0,
\end{equation}
where $P$ is a first order operator with compactly supported
coefficients. It is immediate from standard elliptic estimates that
$R_{\lambda}P$ is a family of compact operators on
$L^2(\bbR^2;\Lambda^1),$ analytic in the complement of $[0,\infty),$
  which tends to zero in norm as $|\lambda|\to \infty$ in the
  complement of any proper cone containing $[0,\infty).$ Using analytic
  Fredholm theory it follows that there is at most a finite set of
  eigenvalues, $\{\lambda^{\df}_{jl}:\: l=1,\dots,N_j\},$ in the
  complement of such a cone, for which~\eqref{eqn115.400} has a non-trivial
  $L^2$-solution. This shows that the $\tau$s giving rise to
  divergence-free wave-guide modes are contained in the set
\begin{equation}\label{eqn118.601}
  \{\pm\sqrt{\omega^2\kappa_0^2-\lambda^{\df}_{jl}}:\: l=1,\dots,N^{\df}_j\}.
\end{equation}

Note that if $\cuxi$ is a solution to~\eqref{eqn114.400}, such that
\begin{equation}\label{eqn119.0.601}
  \cxi=\cxi_1dx_1+\cuxi,\text{ with
  }\cxi_1=\frac{d_2^*[\kappa_j\sqrt{\epsilon_j}\cuxi]}{i\tau\kappa_j\sqrt{\epsilon_j}},
\end{equation}
is a divergence-free wave-guide mode at frequency $\tau,$ then the
$dx_1$ component of~\eqref{eqn111.601} reads
\begin{equation}\label{eqn119.601}
  \sqrt{\mu_j}d_2^*\frac{1}{\mu_j}d_2[\sqrt{\mu_j}\cxi_1]-\omega^2\kappa_j^2\cxi_1
  =i\tau\sqrt{\mu_j}
  d_2^*\left(\frac{\cuxi}{\sqrt{\mu_j}}\right).
\end{equation}
Moreover, if this relation hold, then $\cxi$ defined
in~\eqref{eqn119.0.601} gives a divergence-free wave-guide mode.

Using~\eqref{eqn113.400} we see that
\begin{equation}
  d_2^*\left(\frac{\cuxi}{\sqrt{\mu_j}}\right)=i\frac{\tau\cxi_1}{\sqrt{\mu_j}}+
  \frac{d\kappa_j^2}{\kappa_j^2}\lrcorner\left(\frac{\cuxi}{\sqrt{\mu_j}}\right).
\end{equation}
Substituting this into~\eqref{eqn119.601} leads to the relation
\begin{equation}
 \sqrt{\mu_j}d_2^*\frac{1}{\mu_j}d_2[\sqrt{\mu_j}\cxi_1]-\omega^2\kappa_j^2\cxi_1
  =-\tau^2\cxi_1+\frac{i\tau d\kappa_j^2}{\kappa_j^2}\lrcorner\cuxi.  
\end{equation}
Finally, replacing $\cxi_1$ with $\frac{d_2^*
  w}{i\tau\kappa_j\sqrt{\epsilon_j}},$ where
$w=\kappa_j\sqrt{\epsilon_j}\cuxi,$ this equation can be rewritten
\begin{equation}\label{eqn123.601}
  d_2^*\left[\frac{1}{\mu_j}d_2\frac{1}{\epsilon_j}d_2^*w+\left(\frac{\tau^2}{\kappa_j^2}
   - \omega^2\right)w\right]=0,
\end{equation}
which is a consequence of~\eqref{eqn115.600}.

Starting from a solution $\cuxi$
to~\eqref{eqn114.400}, we choose $\tau$ so that
\begin{equation}
  \lambda=\omega^2\kappa_0^2-\tau^2.
\end{equation}
With either choice of sign for $\tau,$ equation~\eqref{eqn123.601} is
a consequence of~\eqref{eqn115.600}. Using~\eqref{eqn113.400} to
define $\cxi_1,$ we reverse the steps to see that~\eqref{eqn119.601}
holds, which shows that $\cxi,$ defined by~\eqref{eqn119.0.601}, is a
wave-guide mode.  It is divergence-free by construction. We conclude
that any $L^2$-solution to~\eqref{eqn114.400} defines a generalized
divergence free wave-guide.  A priori it is possible the $\Im\tau\neq
0;$ in fact such modes are constructed in~\cite{LaiJiang2018}.  If
$\tau$ is real, then this provides an $L^2$-eigenfunction
for $\hN_{\ff_j,0}(\kappa^2\cD^{\epsilon_j,\mu_j})(\tau)-\omega^2\kappa_j^2.$
We let $\{\lambda_{jl}^{\df}\}\subset (-\infty,0]$ denote the subset
  of non-positive real energies that arise in this way and $\{\tau^{\df}_{jl}\}$
  the positive real solutions to
  \begin{equation}
  \lambda^{\df}_{jl}=\omega^2\kappa_0^2-[\tau^{\df}_{jl}]^2.
\end{equation}
These all correspond to wave-guide modes, and every divergence-free wave-guide mode appears
in this list.
\end{enumerate}


This proves the following proposition.
\begin{proposition}\label{propNew2}
  For each $j$ there is a finite number of non-positive values
  $\{\lambda_{jl}:\: l=1,\dots, N_j\},$ which arise as the
  non-positive point spectrum of the operators in~\eqref{eqn112.405}
  and~\eqref{eqn114.400}. The indicial operator
  $\hN_{\ff_j,0}(\kappa^2\cD^{\epsilon_j,\mu_j})(\tau)-\omega^2\kappa_j^2$
  is invertible for $\omega\kappa_0<|\tau|<\omega\kappa_j^{\max},$
  except for
  \begin{equation}
    \pm\tau^1_{jl}=\pm\sqrt{\omega^2\kappa_0^2-\lambda_{jl}},\, l=1,\dots, N_j.
  \end{equation}
  It is invertible for $\tau\geq \omega\kappa^{\max}_j.$
\end{proposition}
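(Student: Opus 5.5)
The proof will assemble the curl-free/divergence-free analysis just carried out, together with elementary Fredholm theory for the indicial family. Write $P(\tau)=\hN_{\ff_j,0}(\kappa^2\cD^{\epsilon_j,\mu_j}_1)(\tau)-\omega^2\kappa_j^2$, so that $P(\tau)=L_{\tau,j}-(\omega^2\kappa_0^2-\tau^2)$ by~\eqref{eqn48.61}. The operator $L_{\tau,j}$ is a relatively compact, first-order, compactly supported perturbation of $\Delta^{(2)}\otimes\Id$, and its essential spectrum is $[0,\infty)$. For $|\tau|>\omega\kappa_0$ the spectral parameter $\omega^2\kappa_0^2-\tau^2$ is negative. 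Hence $P(\tau)$ is elliptic in $\Psi^{2,0}_{\Sc}(\ff_j)$: both the interior symbol and the normal symbol $|\zeta|^2+\tau^2-\omega^2\kappa_0^2$ at infinity on $\ff_j$ are nonvanishing. It therefore has a parametrix and is Fredholm of index zero, index zero because $P(\tau)$ is self adjoint with respect to $dx'$, or by deformation to $\Delta^{(2)}+\tau^2-\omega^2\kappa_0^2$. Invertibility in $\Psi^{2,0}_{\Sc}(\ff_j)$ thus reduces to triviality of the $L^2$ kernel. Elliptic regularity puts any kernel element in $\cS$, and the parametrix identity then shows the inverse lies in the calculus.

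Next I identify the kernel. Suppose $\cxi\in\ker P(\tau)$, so that $e^{i\tau x_1}\cxi$ is a wave-guide mode. The splitting $\cxi=u_1+u_2$ preceding~\eqref{eqn111.601} gives a curl-free part and a divergence-free part. Both lie in $L^2$: $u_1$ and $u_2$ are obtained from $\cxi$ by second-order operators, and by elliptic regularity $\cxi$ lies in every Sobolev space with decay. Each part satisfies its own equation in~\eqref{eqn111.601}.

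For the curl-free part, $\psi=\fd_1^*(\tau)u_1$ solves~\eqref{eqn112.405}. If $\psi=0$ then $u_1=\omega^{-2}\fd_0(\tau)\psi=0$, so either $u_1=0$ or $\omega^2\kappa_0^2-\tau^2$ is one of the finitely many non-positive eigenvalues $\lambda^{\cf}_{jl}$ of that scalar operator. Finiteness follows from the same analytic Fredholm argument used for~\eqref{eqn115.400}, applied away from $[0,\infty)$.

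For the divergence-free part, $\cuxi$ solves~\eqref{eqn114.400}. If $\cuxi=0$, then~\eqref{eqn113.400} gives $\cxi_1=0$ since $\tau\neq0$, so $u_2=0$. Otherwise $\omega^2\kappa_0^2-\tau^2\in\{\lambda^{\df}_{jl}\}$, which is finite by~\eqref{eqn118.601}. Taking $\{\lambda_{jl}\}$ to be the union of these two finite lists yields the exceptional set $\pm\tau^1_{jl}$, and $P(\tau)$ is invertible at every other $\tau$ with $|\tau|>\omega\kappa_0$.

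For $\tau\geq\omega\kappa_j^{\max}$, I would use the bound~\eqref{jp_ineqal1} for divergence-free modes from~\cite{JolyPoirier} and the integration-by-parts bound~\eqref{jp_ineqal_cf} for curl-free modes. Together they show that no nonzero kernel element exists, so $P(\tau)$ is invertible there. The inequality stated is strict, so the endpoint $\tau=\omega\kappa_j^{\max}$ needs an equality-case argument. In~\eqref{jp_ineqal_cf}, equality forces $d_2(\psi/\sqrt{\epsilon_j})=0$, so $\psi$ is a multiple of $\sqrt{\epsilon_j}$, which is not in $L^2$ unless $\psi=0$; the divergence-free case is handled similarly. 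The evenness $\overline{L_{\tau,j}}=L_{-\tau,j}$ transfers all conclusions to negative $\tau$.

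The main obstacle is the divergence-free eigenproblem~\eqref{eqn114.400}, since $\cM$ is not self adjoint with respect to a single measure. Its eigenvalues are not obviously real or non-positive, and the lack of accumulation must come from analytic Fredholm theory rather than the spectral theorem. Here I restrict to those $\lambda$ that actually correspond to real $\tau$ with $|\tau|>\omega\kappa_0$. I would use self-adjointness of $P(\tau)$ itself to get reality, and the bound $|\tau|\le\omega\kappa_j^{\max}$ to confine $\lambda$ to the compact interval $[\omega^2(\kappa_0^2-(\kappa_j^{\max})^2),0)$. On that interval, finiteness follows from discreteness of the zeros of $\Id+R_\lambda P$ together with the fact that the eigenvalues cannot accumulate at $0$. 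The latter should hold because $P$ has compactly supported coefficients and $|\tau|>\omega\kappa_0$ is strict, though establishing it requires some care near $\lambda=0$.
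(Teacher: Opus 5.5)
Your route is the paper's. You split a kernel element into the curl-free and divergence-free parts $u_1,u_2$ and reduce each to the $\tau$-independent eigenproblems \eqref{eqn112.405} and \eqref{eqn114.400} with spectral parameter $\omega^2\kappa_0^2-\tau^2$. You then get finiteness from analytic Fredholm theory, and use \eqref{jp_ineqal1} and \eqref{jp_ineqal_cf} for $\tau\geq\omega\kappa_j^{\max}$. Your additions are correct and make explicit points the paper leaves implicit:
the reduction of invertibility to injectivity (scattering ellipticity plus self-adjointness when $|\tau|>\omega\kappa_0$);
the fact that $u_1,u_2$ are Schwartz;
the equality case at $\tau=\omega\kappa_j^{\max}$ for curl-free modes, where \eqref{jp_ineqal_cf} is not strict.

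The step that does not close is finiteness near the threshold $\lambda=0$, and the reason you give for it fails. The strict inequality $|\tau|>\omega\kappa_0$ holds termwise along any sequence $\tau_n\downarrow\omega\kappa_0$, so it cannot exclude $\lambda_n=\omega^2\kappa_0^2-\tau_n^2\uparrow 0$. Compact support of the coefficients does not settle it on its own either. Analytic Fredholm theory for $\Id+R_\lambda P$ on $\mathbb{C}\setminus[0,\infty)$ only gives a discrete set there, finite away from $0$ and $\infty$. It says nothing at the boundary point $\lambda=0$, where the two-dimensional free resolvent has a logarithmic singularity. The same objection applies to your use of analytic Fredholm theory in the curl-free case.

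In fairness, the paper's argument has the same limitation: it asserts finiteness only ``in the complement of such a cone''. So this is a weakness you share with the paper, not a departure from it.

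The curl-free case is easy to repair. The operator in \eqref{eqn112.505} is self adjoint with quadratic form $\int\epsilon_j|d_2(\psi/\sqrt{\epsilon_j})|^2dx'+\omega^2\int(\kappa_0^2-\kappa_j^2)|\psi|^2dx'$. This reduces to $\int|d_2\psi|^2dx'$ outside a ball containing the supports of $d\epsilon_j$ and $\kappa_j-\kappa_0$, so Neumann bracketing on that ball gives finitely many negative eigenvalues.

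For the non-self-adjoint divergence-free problem \eqref{eqn114.400} you need a genuine threshold argument. One option is a low-energy expansion of $R_\lambda$. Another, when $0\notin\sigma_{\pp}(L_{\pm\omega\kappa_0,j})$, is something along the lines of the derivative argument in the resonance case of Proposition~\ref{prop2.224}. There $\dot\lambda(\tau)\to 0$ along a branch of negative eigenvalues of $L_{\tau,j}$ that dissolves into the continuum, which makes $\tau^2+\lambda(\tau)-\omega^2\kappa_0^2$ strictly increasing near $\omega\kappa_0$.
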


\begin{remark}
  In the wave-guide literature the parameter values
  $\{\tau_{jk}^{1}\}$ for which non-trivial $L^2$-solutions
  to~\eqref{eqn48.61} exist are called \emph{propagation constants},
  as $\omega/\tau^{1}_{jk}$ is the phase velocity down the
  wave-guide of the corresponding mode. For consistency with the
  preceding discussion we continue to call them wave-guide
  frequencies. We let
  \begin{equation}
    \cT_1=\bigcup_{j=1}^m\{\pm \tau^{1}_{jk}:\:k=1,\dots,N_j^{1}\}
  \end{equation}
  denote the totality of wave-guide frequencies for 1-forms.
\end{remark}

Proposition~\ref{propNew2} suffices to describe the radial sets for
the operators $\cD^{\epsilon_j,\mu_j}_1$ and
$\cD^{\epsilon_j,\mu_j}_2$. To prove the Mourre estimate for
$\kappa^2\cD_1^{\epsilon,\mu}-\lambda$ for other positive energies
$\lambda,$ we need the analogous result for $L_{\tau,j}+\tau^2-\lambda.$ A
circumstance that requires special consideration arises if the
operator $L_{\tau,j}$ has a branch of negative eigenvalues
$\lambda_0(\tau)$ defined for $\tau$ in
$(\sqrt{\lambda},\sqrt{\lambda}+\delta),$ or
$(\sqrt{\lambda}-\delta,\sqrt{\lambda}),$ for a $\delta>0,$ such that
  \begin{equation}\label{eqn106.224}
    \lim_{\tau\to [\sqrt{\lambda}]^{\pm}}\lambda_0(\tau)=0.
  \end{equation}
   In this case, 0 may or may not be in the point spectrum of
   $L_{\sqrt{\lambda},j}.$ If it is not, but~\eqref{eqn106.224} holds,
   then we say that the family $L_{\tau,j}$ \emph{has a resonance at
   $\sqrt{\lambda}$}.  We define having a resonance at
   $-\sqrt{\lambda}$ in the same way.

   \begin{proposition}\label{prop2.224}
     Assume that $0\notin\sigma_{\pp}(L_{\pm\sqrt{\lambda},j}).$ For
     each $j$ there is an integer $N^{1}_j(\lambda),$ so that there
     are $N^{1}_j(\lambda)$ positive values
     $\{\tau^1_{jk}(\lambda):\:k=1\cdots,N^1_j(\lambda)\}$ and
      eigenpairs
     $\{(\pm\tau^{1}_{jk}(\lambda),\cxi^{\pm}_{jk}(\lambda)):\:k=1,\dots,
     N^{1\,\pm}_j(\lambda)\},$ such that
  \begin{equation}\label{eqn49.60}
     \left[
     L_{\tau^{1}_{jk}(\lambda),j}+(\tau^{1}_{jk}(\lambda))^2\right]
   \cxi^{\pm}_{jk}(\lambda)=\lambda \cxi^{\pm}_{jk}(\lambda),
  \end{equation}
  and these forms span the $L^2$-solutions
  to~\eqref{eqn49.60}.  The values of
  $\tau$ for which such a solution exists have $|\tau|$ in an interval
  $(\sqrt{\lambda},\tau_{\max}].$
   \end{proposition}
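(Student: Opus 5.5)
We reduce the statement to eigenvalue problems for the analytic family $L_{\tau,j}$, exactly as in the proof of Proposition~\ref{propNew2}, but now at a general energy $\lambda>0$. We look for real $\tau$ with $\lambda-\tau^2$ an $L^2$-eigenvalue of $L_{\tau,j}$. The first step is to use the decomposition $\cxi=u_1+u_2$ into $\fd(\tau)$-closed (curl-free) and $\fd^*(\tau)$-closed (divergence-free) parts. This decomposition persists for the operator $L_{\tau,j}+\tau^2-\lambda$: the argument around~\eqref{eqn111.601} only uses $\fd^2(\tau)=[\fd^*(\tau)]^2=0$ and invertibility of the spectral parameter, so $\omega^2$ may be replaced by the corresponding shifted value.

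\textbf{Curl-free modes.} The problem reduces to the scalar equation~\eqref{eqn112.405}, with right-hand side $(\lambda-\tau^2)\psi$ after the shift. The operator on the left is $\tau$-independent and self adjoint, and it is a compactly supported perturbation of $\Delta^{(2)}$. Hence it has finitely many non-positive eigenvalues $\lambda^{\cf}_{jl}$: the number is finite by relative compactness and a Birman--Schwinger/min-max count, and positive eigenvalues are excluded by the absence of embedded eigenvalues for compactly supported perturbations in $\bbR^2$ (Rellich). Each eigenvalue gives the pair $\tau=\pm\sqrt{\lambda-\lambda^{\cf}_{jl}}$. The hypothesis $0\notin\sigma_{\pp}(L_{\pm\sqrt\lambda,j})$ excludes $\lambda^{\cf}_{jl}=0$, so $|\tau|>\sqrt\lambda$ strictly.

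\textbf{Divergence-free modes.} Here we use the $\tau$-independent system~\eqref{eqn115.600}/\eqref{eqn114.400}, again with the shifted spectral parameter. We then invoke the analytic Fredholm argument~\eqref{eqn115.400}, which shows there are at most finitely many eigenvalues outside a cone around $[0,\infty)$. The substantive point is to show that the eigenvalues giving real $\tau$ are real and non-positive. For the closed, non-positive range we use that a real-$\tau$ solution gives an $L^2$-eigenvector of the self adjoint operator $L_{\tau,j}$. The claimed non-positivity of the $L^2$-eigenvalues of $L_{\tau,j}$, from the section on wave-guide modes, together with absence of embedded eigenvalues, places them in $(-\infty,0]$. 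We then reverse the construction~\eqref{eqn119.0.601}--\eqref{eqn123.601} to recover modes for both signs of $\tau$. The symmetry $\overline{L_{\tau,j}}=L_{-\tau,j}$ shows the counts $N^{1\,\pm}_j(\lambda)$ are finite and the sets symmetric. The upper bound $\tau_{\max}$ follows from the integration-by-parts inequalities~\eqref{jp_ineqal1} and~\eqref{jp_ineqal_cf}, rewritten with $\lambda$ in place of $\omega^2\kappa_0^2$. These give $\tau^2\le \lambda+\omega^2\max(\kappa_j^2-\kappa_0^2)$ times a bounded factor, hence a uniform bound.

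\textbf{Spanning.} Since each mode splits orthogonally into its two types, the union of the two finite lists spans the solution space.

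\textbf{Main obstacle.} Finiteness is delicate for the divergence-free part, where $\cM$ is not self adjoint for a single measure. The analytic Fredholm argument only gives finiteness away from a cone, not near $0$: eigenvalues could accumulate at $0$ from below. This is exactly where the hypothesis $0\notin\sigma_{\pp}$ enters, together with an implicit assumption of no resonance at $\pm\sqrt\lambda$. I would argue by contradiction: suppose infinitely many real $\tau_n\downarrow\sqrt\lambda$ with normalized eigenvectors. Using a low-energy expansion of $R_\mu$ in $2$ dimensions (logarithmic terms), I would try to extract a limit that is either an $L^2$-eigenvector at $\tau=\sqrt\lambda$, contradicting the hypothesis, or a resonance. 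If this fails in general, I would add the no-resonance condition explicitly, as the paragraph before the statement suggests is intended.
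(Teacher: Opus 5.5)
Your reduction to curl-free and divergence-free problems is valid only at $\lambda=\omega^2\kappa_0^2$, i.e.\ in the setting of Proposition~\ref{propNew2}. This proposition is about other energies: it is what the Mourre estimate needs for $\lambda$ near, but not equal to, $\omega^2\kappa_0^2$. Since $L_{\tau,j}+\tau^2=\kappa_j^2[\fd_0(\tau)\fd_1^*(\tau)+\fd_2^*(\tau)\fd_1(\tau)]+\omega^2(\kappa_0^2-\kappa_j^2)$, the equation $(L_{\tau,j}+\tau^2)\cxi=\lambda\cxi$ is equivalent to
\[
[\fd_0(\tau)\fd_1^*(\tau)+\fd_2^*(\tau)\fd_1(\tau)]\cxi=\bigl(\omega^2+(\lambda-\omega^2\kappa_0^2)\kappa_j^{-2}\bigr)\cxi .
\]
The factor on the right is a nonconstant function of $x'$ unless $\lambda=\omega^2\kappa_0^2$ (or $\kappa_j$ is constant). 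The splitting around \eqref{eqn111.601} needs that factor to be a constant, because multiplication by a nonconstant function does not commute with $\fd(\tau)$ or $\fd^*(\tau)$. Consequently your $u_1$ is not $\fd_1(\tau)$-closed and $u_2$ is not $\fd_1^*(\tau)$-coclosed. Neither piece satisfies the $\tau$-independent equations \eqref{eqn112.405} or \eqref{eqn115.600} with $\lambda-\tau^2$ on the right. Conversely, $\fd_0(\tau)\psi$ built from an eigenfunction of your shifted scalar problem is not a solution. The same objection blocks carrying over \eqref{jp_ineqal1} and \eqref{jp_ineqal_cf}, and it also undermines your spanning argument.

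The missing idea is to work directly with the self-adjoint family $L_{\tau,j}$, which is analytic in $\tau$ and depends on it only through $\tau\tQ^{\epsilon_j,\mu_j}_{10}$. You then count intersections of its eigenvalue branches $\lambda_l(\tau)$ with the parabola $\lambda-\tau^2$. The steps are:
\begin{enumerate}
\item The quadratic form gives $\tau_{\max}$: the $\tau$-independent part is bounded below by $-M\|\cxi\|^2$, while $\lambda-\tau^2-\tau\tQ^{\epsilon_j,\mu_j}_{10}$ is very negative for large $|\tau|$.
\item Compact support of the perturbation gives $|\tau|\geq\sqrt\lambda$.
\item Min-max gives a uniform bound on the number of negative eigenvalues for $|\tau|\le\tau_{\max}$.
\item Kato's analytic perturbation theory gives analytic branches, whose intersections with the parabola are isolated.
\end{enumerate}
The accumulation as $\tau\to\sqrt\lambda^{+}$, which you identify as the main obstacle, is then handled without any no-resonance hypothesis. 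Suppose a branch has $\lambda_0(\tau)\to0$. Because $0\notin\sigma_{\pp}(L_{\sqrt\lambda,j})$, the normalized eigenvectors $v_0$ tend weakly to $0$, hence strongly on compact sets. Since $\tQ^{\epsilon_j,\mu_j}_{10}$ is compactly supported, the Hellmann--Feynman formula gives $\dot\lambda_0=\langle\tQ^{\epsilon_j,\mu_j}_{10}v_0,v_0\rangle\to0$. If infinitely many roots of $\tau^2+\lambda_0(\tau)-\lambda$ accumulated at $\sqrt\lambda$, the mean value theorem would force $\dot\lambda_0(\tau')=-2\tau'\to-2\sqrt\lambda\neq0$ along a sequence, a contradiction. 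Adding a no-resonance assumption, as you suggest, is therefore unnecessary and would weaken the statement.
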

   \begin{remark}
     The notations $N^1_j, \tau^1_{jk},\cxi^{\pm}_{jk},$ etc. refer to
     the special case $\lambda=\omega^2\kappa_0^2,$  which is covered
     in Proposition~\ref{propNew2}.
   \end{remark}
\begin{remark}

  In the proof we show that
  \begin{equation}\label{eqn108.225}
    N_{\max}(\lambda)=\max\{N^{1}_j(\lambda):\: j=1,\dots,m\},
  \end{equation}
  the maximum number of $L^2$-solutions to~\eqref{eqn49.60} for any channel, is
  finite. This follows from the fact that the number of non-positive
  eigenvalues of the operators $L_{\tau,j},$ for
  $|\tau|\in[\sqrt{\lambda},\tau_{\max}]$ is uniformly bounded and the
  analytic dependence of the negative eigenvalues on $\tau.$
\end{remark}
\begin{proof}
  Since $L_{-\tau,j}=\overline{L_{\tau,j}},$ we know that
  $\sigma_{\pp}(L_{\tau,j})=\sigma_{\pp}(L_{-\tau,j}),$ and so it
  suffices to consider $\tau>0.$ That all $\tau^{1}_{jk}(\lambda)\geq
  \sqrt{\lambda}$ follows from the fact that the perturbation terms,
  $\tQ_1^{\epsilon_j,\mu_j}(\tau)+
  \tQ_0^{\epsilon_j,\mu_j}+\omega^2(\kappa_0^2-\kappa_j^2),$ are
  compactly supported. Therefore an $L^2$-solution to~\eqref{eqn49.60}
  in the complement of ball can only exist if $(\lambda-\tau^2)\leq
  0.$

  We rewrite
  $$\tQ_1^{\epsilon_j,\mu_j}(\tau)=\tau\tQ_{10}^{\epsilon_j,\mu_j}+\tQ_{11}^{\epsilon_j,\mu_j},\text{ and }
   \hQ_0^{\epsilon_j,\mu_j}= \tQ_0^{\epsilon_j,\mu_j}+\omega^2(\kappa_0^2-\kappa_j^2).$$
  For a variable $\tau,$ we express the condition in~\eqref{eqn49.60}
  in the weak form
  \begin{multline}
  \int_{\bbR^2}  \left[\left|d\cxi\right|^2+  \left|d^* \cxi\right|^2+ \left\langle
          [\tQ_{11}^{\epsilon_j,\mu_j}+\hQ_{0}^{\epsilon_j,\mu_j}]
          \cxi, \cxi\right\rangle\right]dx= \\
  \int_{\bbR^2}\langle
 [\lambda-\tau^2-\tau\tQ^{\epsilon_j,\mu_j}_{10}]\cxi, \cxi\rangle dx.
  \end{multline}
  
 The left hand side is a compactly supported perturbation of the Dirichlet
 form, and so is bounded below by $-M\| \cxi\|^2,$ for a finite $M.$
 On the other hand, the quadratic dependence of
 $\lambda-\tau^2-\tau\tQ^{\epsilon_j,\mu_j}_{10}$ allows
 the r.h.s. to become an arbitrarily large, negative multiple of
 $\|\cxi\|^2,$ as $|\tau|\to\infty.$ Hence there is a $\tau_{\max}$
 as stated in the proposition. We say that the value of $\tau$ is
 \emph{allowed} if $|\tau|\in[\sqrt{\lambda},\tau_{\max}].$
   
For each $\tau$ in the allowed intervals, the operator $L_{\tau,j}$ has
finitely many negative eigenvalues $\{\lambda_l(\tau)\}.$ A simple
comparison argument using the min-max principle shows that there is an
integer $J$ so that, for $|\tau|\leq \tau_{\max},$ there are at most $J$
such eigenvalues.  The proposition follows once we show that the curve
$\{(\tau,\lambda-\tau^2):|\tau|\leq \tau_{\max}\}$
intersects these finitely many curves, $\{(\tau,\lambda_l(\tau))\}$ in
a finite number of points $\{\tau^{1}_{jl}(\lambda):\:l=1,\dots,
N^{1}_j\}.$

 First assume that $L_{\tau,j}$ does not have resonances at
 $\sqrt{\lambda}.$ In this case we only need to consider intersections between
 strictly negative eigenvalue branches defined for
 $|\tau|\in(\lambda,\tau_{\max}]$ and the curve
   $$\cC_p=\{(\tau,\lambda-\tau^2): \omega\kappa_0<|\tau|<
   \tau_{\max}\}.$$ In particular such an eigenvalue is less than $0$
   and therefore we can use analytic perturbation theory,
   see~\cite{Kato}, to conclude that there are analytic functions
   $(\lambda(\tau),u(\tau)),$ such that
 \begin{equation}
   L_{\tau}u(\tau)=\lambda(\tau)u(\tau)\text{ with }\|u(\tau)\|^2_{L^2}=1.
 \end{equation}
 A standard calculation shows that
 \begin{equation}\label{eqn110.223}
   \dot{\lambda}(\tau)=\langle \dot{L}_{\tau}u(\tau),u(\tau)\rangle=
   \langle \tQ^{\epsilon_j,\mu_j}_{10}u(\tau),u(\tau)\rangle
   =-4\Im\langle \cxi_{1\tau},d\ellk\lrcorner\cuxi_{\tau}\rangle,
 \end{equation}
 and therefore,  we have the
 estimate
 \begin{equation}\label{eqn138.400}
   \begin{split}
     |\dot{\lambda}(\tau)|<&4\|d\ellk\|_{L^{\infty}}\|\cxi_{1\tau}\|(\|\cxi_{2\tau}\|
     +\|\cxi_{3\tau}\|)\\
     &<2\sqrt{2}\|d\ellk\|_{L^{\infty}}.
     \end{split}
 \end{equation}
 
 This shows that it is impossible for a branch to satisfy
 $\lambda_l(\tau)=\lambda-\tau^2.$ A simple compactness
 argument, combined with the bound on the maximum number of
 eigenvalues, for any allowable $\tau,$ and the fact that eigenvalue
 branches can only appear, or disappear at $\lambda=0,$ and therefore
 have analytic representations,
 finishes the proof in the non-resonance case.

 Now suppose that $L_{\tau,j}$ has a resonance at $\sqrt{\lambda};$
 for definiteness assume that there are analytic functions
 $(\lambda_0(\tau),v_0(\tau))$ defined in
 $(\sqrt{\lambda},\sqrt{\lambda}+\epsilon)$ for an $\epsilon>0,$ with
 \begin{equation}\label{eqn112.223}
 L_{\tau,j}v_0(\tau)=\lambda_0(\tau)v_0(\tau),   
 \end{equation}
 and
 \begin{equation}
   \lim_{\tau\to[\omega\kappa_0]^+}\lambda_0(\tau)=0\text{ and }\|v_0(\tau)\|_{L^2}=1.
 \end{equation}
 
 The sequence $\{v_0(\tau)\}$ must converge weakly to zero as
 $\tau\to[\sqrt{\lambda}]^+,$ otherwise the (non-zero) limit would be
 an $L^2$-eigenfunction of $L_{\tau,j}$ with eigenvalue equal to zero,
 which we have excluded. The Rellich theorem, elliptic regularity,
 and~\eqref{eqn112.223} imply that, for any $R>0,$
 $v_0(\tau)\restrictedto_{B_R(0)}$ converges strongly to zero. Since
 $\tQ^{\epsilon_j,\mu_j}_{10}$ is compactly
 supported,~\eqref{eqn110.223} implies that
 \begin{equation}
   \lim_{\tau\to[\omega\kappa_0]^+}\dot{\lambda}_0(\tau)=0.
 \end{equation}
 Using the mean value theorem, we easily conclude that there cannot be
 an infinite sequence $\tau_l\to[\sqrt{\lambda}]^+,$ where
 \begin{equation}\label{eqn116.234}
   \tau_l^2+\lambda_0(\tau_l)-\lambda=0,
 \end{equation}
 see Remark~\ref{rmk5.333}. Hence there is a smallest
 $\tau_1>\sqrt{\lambda}$ where
 \begin{equation}
    \lambda_0(\tau_1)=\lambda-\tau_1^2<0.
 \end{equation}

\end{proof}
\begin{remark}
  The proposition shows that there for each $j$ and
  $0<\lambda,$ there is a non-negative number
  $\gamma_j(\lambda)$ so that
  \begin{equation}
    \gamma_j(\lambda)=\min\{[\pm\tau_{jk}^{1}(\lambda)]^2-\lambda:\: k=1,\dots,N_j^{1}\}.
  \end{equation}
  If this number is positive, then we say that $\gamma_j(\lambda)$ is the
  \emph{gap at the bottom of the continuous spectrum} of the family
  $L_{\tau,j}$ at the eigenvalue $\lambda.$ Suppose that $L_{\tau,j}$
  has a resonance at $\pm\sqrt{\lambda};$ let $\tau_{\delta}$ denote
  the solutions
  \begin{equation}\label{eqn118.223}
    -\lambda_0(\tau_{\delta})=\lambda+\delta-\tau^2_{\delta},
  \end{equation}
  for small $\delta.$ It is clear that $\lim_{\delta\to
    0}\lambda_0(\tau_{\delta})=0.$ So that the family of operators
  $L_{\tau,j}$ does not have a uniform sized gap at the bottom of the
  continuous spectrum at  $\tau=\sqrt{\lambda+\delta},$ as
  $\delta\to 0.$ A similar situation can also arise at
  $\tau=-\sqrt{\lambda}.$

  For our simplified proof of the Mourre inequality for wave-guides we
  assume that, for some $\delta_0>0,$ the ``family'' gap:
  \begin{equation}\label{eqn119.224}
    \Gamma(\lambda,\delta_0)=\inf\{\gamma_j(\lambda+\alpha):\:
    |\alpha|\leq\delta_0, j=1,\dots,m\}
  \end{equation}
  is positive. This will be the case, for some $\delta_0>0,$ if and
  only if the families $\{L_{\tau,j}:\: j=1,\dots,m\}$ do not have
  have  families of eigenvalues $\lambda_0(\tau)$ converging to $0$ as
  $\tau\to\pm\sqrt{\lambda}.$ 
\end{remark}
\begin{remark}\label{rmk5.333}
  If  $0\in\sigma_{\pp}(L_{\pm\sqrt{\lambda},j}),$ then
  there may or may not be a branch of negative eigenvalues
  $\lambda(\tau),$ for $\pm\tau\in
  (\sqrt{\lambda},\sqrt{\lambda}+\delta),$ converging to $0$ as
  $\tau\to\sqrt{\lambda}.$  If no such such branch exists then the
  results of Proposition~\ref{prop2.224} clearly remain true. If such
  a branch exists, then it still follows from~\eqref{eqn110.223} that
  the limits
  \begin{equation}
    \lim_{\tau\to\pm\omega\kappa_0^{\pm}}\dot{\lambda}(\tau)=d
  \end{equation}
  exist. If there were infinitely many roots $\{\tau_l\}$ (for
  definiteness greater than $\sqrt{\lambda}$) to the equation
  in~\eqref{eqn116.234}, then the mean value theorem implies that
  there is an infinite  sequence $\tau_{l+1}<\tau'_{l}<\tau_{l}$ so
  that
  \begin{equation}
    \dot{\lambda}(\tau'_l)=-2\tau_l'.
  \end{equation}
  This would imply that
  \begin{equation}
    \lim_{\tau\to \omega\kappa_0^+}\dot{\lambda}(\tau)=-2\sqrt{\lambda}.
  \end{equation}
  If $\sqrt{2}\|d\ellk\|_{L^{\infty}}<\sqrt{\lambda},$
  then~\eqref{eqn138.400} shows that this is not possible, and again
  the conclusion of the theorem holds in this case. Hence the
  conclusion holds for sufficiently large energies.

  Of particular interest is the case $\lambda=\omega^2\kappa_0^2.$
  Even if $\sqrt{2}\|d\ellk\|_{L^{\infty}}>\omega\kappa_0,$
  Proposition~\ref{propNew2} implies that there cannot be infinitely many
  solutions to $\lambda(\tau)=\omega^2\kappa_0^2-\tau^2.$
\end{remark}

  To establish radiation conditions and prove uniqueness results
  for~\eqref{eqn27.2} and thereby also Maxwell's equations, we need to
  establish a Mourre estimate for this operator at the energy
  $\lambda=\omega^2\kappa_0^2,$ see~\eqref{eqn27.1}.  More generally
  to prove the Mourre estimate for this operator, at energy $\lambda,$
  it is essential that for each eigenvalue function $\lambda_l(\tau)$
  of the indicial operator, $L_{\tau,j},$ such that there is a
  solution at \emph{phase velocity}, $0<\tau_0,$ to
  \begin{equation}\label{eqn138.500}
    \lambda_l(\tau_0)=\lambda-\tau_0^2,
  \end{equation}
  for which it is also true that the \emph{group velocity},
  \begin{equation}\label{eqn139.500}
    \dot{\lambda}_l(\tau_0)+2\tau_0> 0.
  \end{equation}
  In physical terms, this is requirement that the sign of the group
  velocity and phase velocity agree.

  To prove this seems to require some hypothesis on the wave-number
  $\kappa(x).$  We can prove such a result under two rather different
  hypotheses. 
  \begin{proposition}\label{prop4} Let $L_{\tau,j}$ be the operator defined
    in~\eqref{eqn48.61}.
    \begin{enumerate}
    \item If $\sqrt{2}\|d\log\kappa_j\|_{L^{\infty}}<\sqrt{\lambda}$ then at any
    solution $\tau_0$ to~\eqref{eqn138.500} the inequality
    in~\eqref{eqn139.500} also holds.
      \item Let $\kappa_j^{\max}=\max\{\kappa_j(x'):\:x'\in\bbR^2\}.$
        \begin{equation}\label{cntrst_ratio_est}
          1<\frac{\kappa_j^{\max}}{\kappa_0}<\sqrt{2},
          \end{equation}
then for any $\tau_0$ such that~\eqref{eqn138.500} holds with
        $\lambda=\omega^2\kappa_0^2$ the estimate
        in~\eqref{eqn139.500} also holds.
    \end{enumerate}
  \end{proposition}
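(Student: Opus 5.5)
Both parts rest on the Hellmann--Feynman formula \eqref{eqn110.223}: along an analytic branch $(\lambda_l(\tau),u(\tau))$ of negative eigenvalues of $L_{\tau,j}$ with $\|u(\tau)\|=1$ we have $\dot\lambda_l(\tau)=\langle\tQ^{\epsilon_j,\mu_j}_{10}u,u\rangle=-4\Im\langle\cxi_{1},d\ellk\lrcorner\cuxi\rangle$, and hence \eqref{eqn138.400}. Write $u=\cxi_1dx_1+\cuxi$ with $a=\|\cxi_1\|$, $b=\|\cuxi\|$ and $a^2+b^2=1$. Cauchy--Schwarz gives $|\dot\lambda_l|\le 4\|d\ellk\|_\infty ab\le 2\|d\ellk\|_\infty$. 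The cruder bound $2\sqrt2\|d\ellk\|_\infty$ quoted in \eqref{eqn138.400} also suffices. At a solution $\tau_0>0$ of \eqref{eqn138.500} with $\lambda_l(\tau_0)<0$ we automatically have $\tau_0>\sqrt\lambda$, by Proposition~\ref{prop2.224}.

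\textbf{Part 1.} This follows at once. We get
\[
\dot\lambda_l(\tau_0)+2\tau_0>2\tau_0-2\sqrt2\|d\ellk\|_\infty>2\sqrt\lambda-2\sqrt\lambda=0,
\]
since $\ellk=\log\kappa_j$ in the channel. If the branch meets $\lambda-\tau^2$ at a point where the eigenvalue is $0$, then $\tau_0=\sqrt\lambda$, and the same bound applies to one-sided limits of $\dot\lambda_l$, using Remark~\ref{rmk5.333}.

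\textbf{Part 2.} Here $\lambda=\omega^2\kappa_0^2$, and the idea is to trade the smallness of $d\ellk$ for the Joly--Poirier bound on $\tau_0$. First, by Proposition~\ref{propNew2} and \eqref{jp_ineqal1}, \eqref{jp_ineqal_cf}, any solution satisfies $\omega\kappa_0<\tau_0<\omega\kappa_j^{\max}<\sqrt2\,\omega\kappa_0$. Second, I would not bound $\dot\lambda_l$ by $\|d\ellk\|_\infty$, which is uncontrolled. Instead I would compute $\dot\lambda_l(\tau_0)$ directly on the mode, using its structure.

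The mode decomposes orthogonally into a curl-free and a divergence-free part. On the curl-free part the eigenvalue equation \eqref{eqn112.405} depends on $\tau$ only through $\tau^2$. So along such a branch the function $\lambda(\tau)+\tau^2$ is constant, and the group-velocity quantity equals $2\tau_0>0$. For the divergence-free part I would use the constraint \eqref{eqn113.400}, namely $i\tau\kappa_j\sqrt{\epsilon_j}\,\cxi_1=d_2^*[\kappa_j\sqrt{\epsilon_j}\cuxi]$, to rewrite $\Im\langle\cxi_1,d\ellk\lrcorner\cuxi\rangle$. The aim is to turn it, by integrating by parts against the quadratic form of \eqref{eqn48.61}, into an identity of the form
\[
\dot\lambda_l(\tau_0)+2\tau_0=\frac{2}{\tau_0}\Bigl(c_1\|\text{(derivative terms)}\|^2+\int(2\omega^2\kappa_0^2-\omega^2\kappa_j^2)|\cdot|^2\,dx'-\ldots\Bigr).
\]
Equivalently, I would differentiate the Joly--Poirier generalized eigenvalue relation $\hat\lambda(\tau)=\omega^2$. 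Group velocity equals $\omega/\tau$ times $\langle(\text{energy flux})\rangle$ divided by $\langle\kappa_j^2|\cdot|^2\rangle$, and positivity of the flux reduces to $\tau_0^2<2\omega^2\kappa_0^2$, i.e.\ to $\kappa_j^{\max}<\sqrt2\kappa_0$. The factor $\sqrt2$ in \eqref{cntrst_ratio_est} strongly suggests that an estimate $|\dot\lambda_l|\le\tau_0^{-1}(\ldots)$, combined with $\tau_0^2<2\omega^2\kappa_0^2$, is what closes the argument.

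The main obstacle is this last identity. Extracting a sign for $\dot\lambda_l+2\tau_0$ from the non-self-adjoint, $\tau$-independent system \eqref{eqn115.600} requires the right weighted pairing. I would first try pairing with weight $\kappa_j^2$, using the expression $\dot\lambda=\langle\tQ_{10}u,u\rangle$ together with the $dx_1$-component equation \eqref{eqn119.601}.
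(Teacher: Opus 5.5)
Your Part 1 is the paper's argument: the bound \eqref{eqn138.400} combined with $\tau_0>\sqrt\lambda$. Your sharper constant $2\|d\ellk\|_\infty$ is also correct.

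\textbf{Part 2 has a genuine gap.} The inequality $\dot\lambda_l(\tau_0)+2\tau_0>0$ is never established, and you acknowledge that the decisive identity is missing. In addition, the one concrete step you give is false. The scalar equation \eqref{eqn112.405} is equivalent to the eigen-equation for $L_{\tau,j}$ only at the wave-guide frequency $\tau_0$. If $\cxi_\tau=\fd_0(\tau)\psi$, a short computation gives $L_{\tau,j}\cxi_\tau=(\omega^2\kappa_0^2-\tau_0^2)\cxi_\tau+(\tau^2-\tau_0^2)\kappa_j^2[\fd_0(\tau),\kappa_j^{-2}]\psi$. The commutator vanishes only when $d\kappa_j=0$. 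So the $\tau$-independence of the scalar problem tells you nothing about the analytic branch $\lambda_l(\tau)$. Indeed, $\langle\tQ^{\epsilon_j,\mu_j}_{10}u,u\rangle$ is in general nonzero for a curl-free mode $u$.

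Two further problems:
\begin{itemize}
\item ``$\lambda(\tau)+\tau^2$ constant'' would give $\dot\lambda+2\tau_0=0$, not $2\tau_0$.
\item Splitting the mode into curl-free and divergence-free parts does not split $\dot\lambda_l(\tau_0)=\langle\tQ_{10}u,u\rangle$. The two parts are only $\kappa_j^2\,dx'$-orthogonal, and $\tQ_{10}$ couples them, so cross terms survive. At a degenerate eigenvalue the analytic eigenvectors can be genuine mixtures. This is exactly the obstruction the paper flags in the remark after its proof.
\end{itemize}

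\textbf{The missing idea.} No decomposition is needed; the paper estimates $\dot\lambda(\tau_0)$ directly on the full mode.
\begin{itemize}
\item Set $G=\kappa_0^{-2}-\kappa^{-2}\ge 0$ and use $\kappa^{-2}d\log\kappa=\frac12\,dG$ to write $q=\frac12\dot\lambda(\tau_0)=-\Im\langle\sqrt\mu\,\cxi_1,dG\lrcorner\epsilon\sqrt\mu\,\cuxi\rangle$.
\item Integrate by parts to move the derivative from $G$ onto the mode. This gives $q=q_1+q_2$, with $d_2(\sqrt\mu\,\cxi_1)$ in $q_1$ and $d_2^*(\epsilon\sqrt\mu\,\cuxi)$ in $q_2$.
\item Add and subtract $i\tau_0$-terms. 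This turns these expressions into components of $d_{\tau_0}(\sqrt\mu\,\cxi)$ and $d^*_{\tau_0}(\epsilon\sqrt\mu\,\cxi)$, and produces the positive terms $\tau_0\|G^{1/2}\kappa\cuxi\|^2$ and $\tau_0\|G^{1/2}\kappa\cxi_1\|^2$.
\item Bound the remainders using Cauchy--Schwarz, the energy identity \eqref{eqn153}, and $G\kappa^2\le r^2-1$ with $r=\kappa_j^{\max}/\kappa_0$.
\end{itemize}
Using only $\tau_0\ge\omega\kappa_0$, this yields $\dot\lambda(\tau_0)+2\tau_0\ge 2\omega\kappa_0(1-\Lambda x+x^2)$, where $x=\|\kappa G^{1/2}\cxi\|$ and $\Lambda=\sqrt2\,r(r^2-1)^{1/2}$. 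One has $\Lambda<2$ exactly when $r<\sqrt2$. So the $\sqrt2$ is the discriminant condition for this quadratic, not the condition $\tau_0^2<2\omega^2\kappa_0^2$ you guessed. The Joly--Poirier upper bound on $\tau_0$ plays no role.
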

  \begin{remark}
    The second part of the proposition gives a condition for the group
    velocity and phase velocity to have the same sign that is
    independent of $\omega.$ The same result holds for bi-infinite
    wave-guides with piecewise constant $\kappa(x').$ The proof gives
    an explicit bound:
    \begin{equation}\label{eqn154.700}
      \dot{\lambda}_l(\tau_0)+2\tau_0>\omega\kappa_0\left[2-\left(\frac{\kappa^{\max}_j}{\kappa_0}
        \right)^2\right]\left[1+\left(\frac{\kappa^{\max}_j}{\kappa_0}\right)^2\right].
    \end{equation}
  \end{remark}
  \begin{proof}
    The proof of 1. is straightforward: From~\eqref{eqn138.400} we
    know that $|\dot{\lambda}_l(\tau)|<2\sqrt{2}\|d\ellk\|_{L^{\infty}};$ on
    the other hand, for~\eqref{eqn138.500} to hold, $\tau_0$ must be
    greater than $\sqrt{\lambda}.$
    Therefore~\eqref{eqn139.500} holds if
  \begin{equation}
    \sqrt{2}\|d\log\kappa_j\|_{L^{\infty}}\leq\sqrt{\lambda}.
  \end{equation}

  The proof of 2. is more involved.  Let $\cxi_{\tau}$ denote an analytic family
  of normalized solutions to
  $$L_{\tau,j}\cxi_{\tau}=\lambda(\tau)\cxi_{\tau},$$
  with $\lambda(\tau_0)=\omega^2\kappa_0^2-\tau_0^2.$ To simplify the notation we omit
  the $j$ and $\tau$ subscripts. 
  Integrating by parts, the PDE implies that
  \begin{equation}\label{eqn153}
    \|\frac{1}{\sqrt{\mu}}d_{\tau}\sqrt{\mu}\cxi\|^2+
    \|\frac{1}{\epsilon\sqrt{\mu}}d^*_{\tau}\epsilon\sqrt{\mu}\cxi\|^2=
    \omega^2\|\kappa\cxi\|^2.
  \end{equation}
  As before, let
  $\cxi=\cxi_1dx_1+\cuxi,$ and again use formula~\eqref{eqn110.223},
  which states
  \begin{equation}
    2q=\dot{\lambda}(\tau_0)=-4\Im\langle \cxi_{1},d_2\log\kappa\lrcorner\cuxi\rangle.
  \end{equation}
  We need to show that
  \begin{equation}
    g=2(\tau_0+q)>0.
  \end{equation}

  Letting $G=\kappa^{-2}_0-\kappa^{-2}\geq 0,$ and using the identity
  \begin{equation}
 \kappa^{-2}d_2\log\kappa=-\frac{1}{2}d_2\kappa^{-2},
  \end{equation}
  we write
  \begin{equation}
    q=-2\Im\langle
    \sqrt{\mu}\cxi_1,\kappa^{-2}d_2\log\kappa\lrcorner\epsilon\sqrt{\mu}\cuxi)=
    -\Im\langle\sqrt{\mu}\cxi_1,d_2 G\lrcorner\epsilon\sqrt{\mu}\cuxi\rangle.
  \end{equation}
  Using the fact that $d_2^*(f\alpha)=fd^*\alpha-df\lrcorner\alpha,$ and
  integrating by parts we let $q=q_1+q_2,$ where
  \begin{equation}
 q_1=\Im\langle d_2(\sqrt{\mu}\cxi_1),G\epsilon\sqrt{\mu}\cuxi\rangle,\quad
 q_2=-\Im\langle\sqrt{\mu}\cxi_1,G d_2^*(\epsilon\sqrt{\mu}\cuxi)\rangle.
  \end{equation}

  We easily show that for a 1-form $\theta=\theta_1dx_1+\cutheta$
  \begin{equation}
    \|\mu^{-\frac 12}d_{\tau}\theta\|^2=\|\mu^{-\frac
      12}d_{2}\cutheta\|^2+
    \|\mu^{-\frac 12}(d_{2}\theta_1-i\tau_0\cutheta)\|^2.
  \end{equation}
  Applying this with $\theta=\sqrt{\mu}\cxi,$ and using~\eqref{eqn153}
  shows that
  \begin{equation}
    \|\mu^{-\frac
      12}(d_{2}\sqrt{\mu}\cxi_1-i\tau_0\sqrt{\mu}\cuxi)\|^2\leq
    \omega^2\|\kappa\cxi\|^2.
  \end{equation}
  Using this identity and the Cauchy-Schwarz inequality  shows that
  \begin{equation}
    \begin{split}
    q_1&=-\Im\langle\mu^{-\frac 12}
    (d_{2}\sqrt{\mu}\cxi_1-i\tau_0\sqrt{\mu}\cuxi)
    ,G\kappa^2\cuxi\rangle+
    \tau_0\|G^{\frac 12}\kappa\cuxi\|^2\\ &\geq
    -\omega\left(\frac{\kappa_1^2}{\kappa_0^2}-1\right)^{\frac 12}
    \|\kappa\cxi\|\|G^{\frac 12}\kappa\cuxi\|+\tau_0\|G^{\frac
      12}\kappa\cuxi\|^2.
    \end{split}
  \end{equation}

  To estimate $q_2$ we use the identity
  $d_{\tau}^*\theta=d_2^*\cutheta-i\tau\theta_1$ to rewrite
  \begin{equation}
    q_2=-\Im\left\langle\kappa^2
    G\cxi_1,\frac{1}{\epsilon\sqrt{\mu}}d_{\tau}^*(\epsilon\sqrt{\mu}\cxi)\right\rangle+
    \tau_0\|\kappa G^{\frac 12}\cxi_1\|^2.
  \end{equation}
  As~\eqref{eqn153} implies that
  $\|(\epsilon\sqrt{\mu})^{-1}d_{\tau}^*(\epsilon\sqrt{\mu}\cxi)\|\leq \omega\|\kappa\cxi\|,$
    we see that
    \begin{equation}
      q_2\geq
      -\omega\left(\frac{\kappa_1^2}{\kappa_0^2}-1\right)^{\frac
        12}\|\kappa\cxi\|
      \|G^{\frac 12}\kappa\cxi_1\|+\tau_0\|\kappa G^{\frac 12}\cxi_1\|^2.
    \end{equation}

    Let $\kappa_1=\kappa_j^{\max}$; combining these estimates gives the lower bound:
    \begin{equation}
            g\geq 2[\tau_0+\tau_0\|\kappa G^{\frac 12}\cxi\|^2-
        \omega\kappa_1(\|G^{\frac 12}\kappa\cxi_1\|+\|G^{\frac 12}\kappa\cuxi\|)].
    \end{equation}
   as $\omega\kappa_0\leq\tau_0,$ this estimate
    implies that
    \begin{equation}\label{eqn169}
      g\geq 2\omega\kappa_0\left[1-\sqrt{2}\frac{\kappa_1}{\kappa_0}
        \left(\frac{\kappa_1^2}{\kappa_0^2}-1\right)^{\frac
          12}\|\kappa G^{\frac 12}\cxi\|+\|\kappa G^{\frac
          12}\cxi\|^2\right].
    \end{equation}
    The polynomial $1-\lambda
    x+x^2$ is strictly positive if  $\lambda<2,$ which is the
    case in \eqref{eqn169} if $\frac{\kappa_1}{\kappa_0}<\sqrt{2}.$ In
    this case the minimum value is $1-\frac{\lambda^2}{4}.$
 
  \end{proof}

  \begin{remark}
    If the wave-guide mode, $\cxi_{\tau_0}$ used in the proof happens
    to be either divergence-free or curl-free, then we can improve
    the estimate in~\eqref{eqn169} to
    \begin{equation}
      g\geq 2\omega\kappa_0\left[1-\frac{\kappa_1}{\kappa_0}
        \left(\frac{\kappa_1^2}{\kappa_0^2}-1\right)^{\frac
          12}\|\kappa G^{\frac 12}\cxi\|+\|\kappa G^{\frac
          12}\cxi\|^2\right].
    \end{equation}
    This quantity is positive if
    \begin{equation}
      \frac{\kappa_1}{\kappa_0}
      <\sqrt{\frac{1+\sqrt{17}}{2}},
    \end{equation}
   which is larger than 1.6. Even though wave-guide modes can always be split into a
   divergence-free part and a curl-free part, it is by no means
   obvious that an analytic analytic family of eigenfunctions
   $\{\cxi_{\tau}\}$ satisfying
   \begin{equation}
     L_{\tau,j}\cxi_{\tau}=\lambda(\tau)\cxi(\tau),
   \end{equation}
   with $\lambda(\tau_0)=\omega^2\kappa_0^2-\tau_0^2$ can be split in
   this way. So we cannot generally replace the $\sqrt{2}$
   in~\eqref{cntrst_ratio_est} with this larger number.
  \end{remark}

\begin{remark}\label{rmk8.500}
  Let $c>0,$ and define
  \begin{equation}\label{eqn5.89.93}
    \lambda_M(c)=\inf\{\mu:\text{ if~\eqref{eqn138.500} holds
        for a }\lambda>\mu,\text{ then } \dot{\lambda}_l(\tau_0)+2\tau_0\geq c\}.
  \end{equation}
  We let $\IM(c)=(\lambda_M(c),\infty)$ be the \emph{Mourre interval};
  we will be able to prove a Mourre estimate for energies in $\IM.$ It
  follows from Proposition~\ref{prop4} that, for some $c>0,$
  $\lambda_M(c)$ is finite. It remains a very interesting question
  whether or not $\lambda_M(c)\to 0,$ as $c\to 0.$
\end{remark}

\subsection{Characteristic Variety and Radial Sets for $\cH^{\epsilon,\mu}_{\omega}$}\label{tau_symm}

As $\kappa^2\cD_2^{\mu,\epsilon}=
\star\kappa^2\cD_1^{\epsilon,\mu}\star,$ there is no obvious reason
why the wave-guide frequencies for 1-forms should be the same as for
2-forms. To define the characteristic variety and radial sets for
$\cH^{\epsilon,\mu}_{\omega},$ which has both a 1-form and a 2-form part, we
therefore need to take the union of the $\tau$s that arise separately
for the 1-form and 2-form parts.  The analysis of the 2-form operator
is essentially the same as the 1-form case.  Hence, under assumptions
like those made in Proposition~\ref{prop2.224}, there is a finite set
of $\tau$s,
 $$\cT_2=\bigcup_{j=1}^m\{\pm\tau^{2}_{jk}:\:k=1,\dots,N_j^{2}\}.$$
with $|\tau|\in[\omega\kappa_0,\tau_{\max}],$ for which 2-form
wave-guide modes exist at
$$k_0^2=\omega^2\kappa_0^2.$$

  Divergence-free  1-form wave-guide
  modes automatically define 2-form modes as well. These are the
  wave-guide modes of interest for Maxwell's equations. Similarly, if
  we look at \emph{closed} 2-form wave-guide modes, then they automatically
  define Maxwell wave-guide modes. Thus the wave-guide 
  frequencies for  Maxwell equations are a subset of $\cT_1\cap
  \cT_2.$

Using Proposition~\ref{propNew2} we can describe the characteristic variety and
radial sets for the 1-form operator,
$$\kappa^2\cD_1^{\epsilon,\mu}+\omega^2(\kappa_0^2-\kappa^2)-\omega^2\kappa_0^2=
\kappa^2\cD_1^{\epsilon,\mu}-\omega^2\kappa^2,$$
over the front faces. The indicial operators
$\hN_{\ff_j,0}(\kappa^2\cD_1^{\epsilon,\mu}-\omega^2\kappa^2)(\tau)$
are invertible except for $\tau^2\leq \omega^2\kappa_0^2,$ and
$\{\pm\tau_{jk}^{1}:\: k=1,\dots,N^{1}_j\}.$ Thus the
characteristic variety, for the 1-form part, over $\ff_j$ is
\begin{equation}
  \Sigma^1_{k_0^2,\ff_j}=\{|\tau|\leq
  \omega\kappa_0\}\cup\{\pm\tau^{1}_{jk}:\: k=1,\dots, N^{1}_j\},
\end{equation}
and {\Bl the incoming (top line, $+$) and outgoing (bottom line, $-$)}
\Bk radial sets are
\begin{equation}
  R^{1,\pm}_{k_0^2,\ff_j}=
  \{\tau=
\pm  \omega\kappa_0\}\cup\{\pm\tau^{1}_{jk}:\:k=1,\dots,N^{1}_j\}.
\end{equation}
There are analogous definitions of $\Sigma^2_{k_0^2,\ff_j},
R^{2,\pm}_{k_0^2,\ff_j}$ for the 2-form operator,
$\kappa^2\cD_2^{\epsilon,\mu}-\omega^2\kappa^2,$ with
$\{\pm\tau^2_{jk}\}$ replacing  $\{\pm\tau^1_{jk}\}.$

\subsection{Uniqueness and Radiation Conditions for $\cH^{\epsilon,\mu}_{\omega}$}
Before stating the uniqueness result we briefly recall its proof.  For
this argument we need to know that $\cH^{\epsilon,\mu}_{\omega}$
satisfies a Mourre estimate at $\lambda=\omega^2\kappa_0^2,$ which
will be the case if either of the conditions in
Proposition~\ref{prop4} are satisfied.  Let $\cE$ denote the
frequencies $\omega,$ for which $\cH^{\epsilon,\mu}_{\omega}$ has an
$L^2$-eigenfunction at $\omega^2\kappa_0^2.$ Using Mourre estimates,
we show that $\cE$ is a discrete set, which we call the set of
\emph{exceptional frequencies}.  The propagation of singularities
results from~\cite{VasyAsterisque}, as extended in
Appendix~\ref{prop_est}, show that the 3-scattering wave-front sets of
these eigenfunctions are empty and therefore they actually belong to
Schwartz class. Unfortunately, the standard argument from the scalar
case showing that such eigenvectors must be super-exponentially
decaying and therefore, in fact, zero, is not readily applicable in
this case.

 The radiation conditions for the second order operators considered above are:
    \begin{theorem}\label{RadCondMax}    If
    $\cH^{\epsilon,\mu}_{\omega}$ satisfies a Mourre estimate at
    $k_0^2=\omega^2\kappa^2_0,$ and $\omega\notin\cE,$ 
    then    a solution $\txi,$ $\teta$ resp. to
    \begin{equation}\label{eqn131.337}
      \kappa^2\cD^{\epsilon,\mu}_1\txi=\omega^2\kappa^2\txi,\quad
      \kappa^2\cD^{\epsilon,\mu}_2\teta=\omega^2\kappa^2\teta\text{ resp. }
    \end{equation}
    for which the components satisfy the classical Sommerfeld
    radiation condition along $\mf$ and such that
    \begin{equation}\label{eqn132.337}
      \WF_{3\Sc}^{*,l}(\txi)\cap R^{1,-}_{k_0^2,\ff_j}=\emptyset,\quad
       \WF_{3\Sc}^{*,l}(\teta)\cap R^{2,-}_{k_0^2,\ff_j}=\emptyset\text{ resp.},
    \end{equation}
    for $j=1,\dots, m,$ and an $l>-\frac 12,$ is identically zero.

    If $\omega\in\cE,$ then such solutions belong to finite
    dimensional spaces of $L^2$-eigenfunctions. The set $\cE$ is
    discrete.
  \end{theorem}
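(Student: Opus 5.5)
The plan follows the scalar argument in the proof of Theorem~\ref{thm2}, with the Mourre estimate replacing the product-structure input and the exceptional set $\cE$ absorbing the failure of the super-exponential decay argument. Since $\cH^{\epsilon,\mu}_{\omega}$ preserves form degree and $\kappa^2\cD_2^{\epsilon,\mu}=\star\kappa^2\cD_1^{\mu,\epsilon}\star$, it suffices to treat the 1-form equation. For $\txi$ this equation is $(\cH^{\epsilon,\mu}_{\omega}-\omega^2\kappa_0^2)\txi=0$, restricted to 1-forms.

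\emph{Step 1 (propagation).} On $\Int\mf$ the perturbation vanishes to infinite order, so the classical Sommerfeld condition gives $\WF^{*,l}_{\Sc}(\txi)\cap R^{+}_{k_0^2,\mf}=\emptyset$ for some $l>-\frac12$, as in Section~\ref{wg_geom}. Combined with \eqref{eqn132.337} on each $\ff_j$, we obtain $\WF^{*,l}_{3\Sc}(\txi)\cap R^{1,\pm}_{k_0^2}=\emptyset$ for one of the two signs, throughout the radial set. This sign is the one dictated by the normalization of \eqref{eqn132.337}.

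Next apply the propagation of singularities estimates of Appendix~\ref{prop_est}. These are the extension of \cite[Prop.~14.1]{VasyAsterisque} to indicial operators with a term linear in $\tau$. Off the characteristic set $\Sigma^1_{k_0^2}$, Proposition~\ref{propNew2} gives ellipticity, and hence invertibility of $\hN_{\ff_j,0}(\tau)$. Every non-radial characteristic point lies on a generalized broken bicharacteristic emanating from the radial set where regularity is already known. The radial point estimates then propagate this regularity to show that $\WF^{*,l}_{3\Sc}(\txi)$ lies in the opposite radial set, for the threshold $l>-\frac12$.

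\emph{Step 2 (Isozaki step).} Use the Mourre estimate at $k_0^2=\omega^2\kappa_0^2$. In the form needed, it is a positive commutator estimate $\phi(H)i[H,A]\phi(H)\ge c\,\phi(H)^2+K$, with $A$ the generator of dilations. This is supplied by Section~\ref{ss.mourre} under the group-velocity hypotheses of Proposition~\ref{prop4}, with the gap condition \eqref{eqn119.224} if needed.

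From it, run the argument of \cite[Lemma~4.5]{Isozaki94}, \cite[Prop.~17.8]{VasyAsterisque}. This argument exploits that $\txi$ solves the homogeneous equation: the boundary pairing $\langle [H,A_\chi]\txi,\txi\rangle=0$ for suitable microlocalized cutoffs. It upgrades the regularity to $\WF^{*,l'}_{3\Sc}(\txi)=\emptyset$ for some $l'\in(-\frac12,0)$, i.e.\ $\txi\in L^2$ near infinity. The radial point estimates then give $\WF_{3\Sc}(\txi)=\emptyset$, so $\txi$ is Schwartz, hence an $L^2$-eigenfunction of $\cH^{\epsilon,\mu}_{\omega}$ at $\omega^2\kappa_0^2$. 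For $\omega\notin\cE$ this forces $\txi=0$, by the definition of $\cE$.

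\emph{Step 3 (structure of $\cE$).} For $\omega\in\cE$, Step 2 shows the solutions are $L^2$-eigenfunctions. Finite dimensionality comes from the Mourre estimate: a positive commutator estimate near $\omega^2\kappa_0^2$ with compact error $K$ implies, by the virial theorem $\langle i[H,A]u,u\rangle=0$, that eigenvalues have finite multiplicity.

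Discreteness of $\cE$ is the point where the $\omega$-dependence of $\cH^{\epsilon,\mu}_{\omega}$ matters. One route is the generalized eigenvalue formulation \eqref{eqn27.2}: $\omega^2$ is an eigenvalue of the $\omega$-independent operator $\kappa^2\cD_1^{\epsilon,\mu}$, self adjoint with respect to $d\nu_{\kappa^2}$, up to the decoupling. The Mourre estimate holds uniformly on a neighborhood of each $\omega$ in the Mourre interval $\IM$ of Remark~\ref{rmk8.500}, and this implies local finiteness of point spectrum there. If the mixture with the potential term obstructs this, the fallback is to treat $\omega\mapsto\cH^{\epsilon,\mu}_{\omega}$ as an analytic family and combine the uniform Mourre estimate with analytic Fredholm theory on the limiting absorption resolvent.

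I expect the main obstacle to be Step 2. The Isozaki argument must be run with the non-product indicial operators, whose $\tau\tQ_{10}$ term means the commutator's indicial part at $\ff_j$ is $2\tau^2+\tau\dot\lambda_l(\tau)$ rather than $2\tau^2$. Keeping its sign on the relevant wave-guide energies is exactly the content of \eqref{eqn139.500}, and that term has to be carried consistently through the microlocalized commutator.
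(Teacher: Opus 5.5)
Your Steps 1--2 and the finite-multiplicity claim match the paper's proof. The paper likewise checks the hypothesis of \cite[Prop.~17.8]{VasyAsterisque}, with the changes of Appendix~\ref{prop_est}, to get $\WF_{3\Sc}(u)=\emptyset$. So $u$ is an $L^2$-eigenfunction and vanishes for $\omega\notin\cE$. One small slip: $\WF^{*,l'}_{3\Sc}(\txi)=\emptyset$ with $l'\in(-\frac12,0)$ does not mean $\txi\in L^2$. It is the subsequent radial estimate that gives Schwartz decay.

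The gap is in the discreteness of $\cE$.

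\emph{Why your routes do not close.} Your main route asserts that a uniform Mourre estimate near $\omega$ ``implies local finiteness of point spectrum.'' But the hypothesis gives no Mourre estimate for the $\omega$-independent operator $\cD_1^{\epsilon,\mu}$ on $L^2(\kappa^2dx)$. Its indicial operators carry $\kappa_j^{-2}\tau^2$, so it is not of the form treated in Section~\ref{ss.mourre}. You only have estimates for each $P(\omega)=\kappa^2\cD_1^{\epsilon,\mu}-\omega^2\kappa^2$ at energy $0$. Eigenfunctions $u_j$ at distinct $\omega_j$ are zero modes of \emph{different} operators. Uniformity of such estimates alone cannot exclude accumulation of $\cE$, since nothing yet prevents the $u_j$ from having a nonzero weak limit. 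The analytic-Fredholm fallback is not available either: there is no analytic Fredholm family in $\omega$ here. The limiting absorption resolvent is only a boundary value, and embedded eigenvalues need not move analytically.

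\emph{The missing step, which is the paper's argument.}
\begin{enumerate}
\item[(i)] Perturb in $\omega$ rather than appeal to $\IM$. Since $P(\omega')-P(\omega^*)=(\omega^{*2}-\omega'^2)\kappa^2$ is bounded, for $|\omega'-\omega^*|$ small one gets $\phi(P(\omega'))i[A,P(\omega')]\phi(P(\omega'))\geq\frac c2\phi(P(\omega'))^2-\phi(P(\omega'))K\phi(P(\omega'))$ with the \emph{same} compact $K$.
\item[(ii)] The virial theorem, applied to the Schwartz eigenfunctions $u_j$, gives $0\geq c'\|u_j\|^2-\langle Ku_j,u_j\rangle$.
\item[(iii)] The generalized-eigenvalue structure you mention supplies orthogonality: $0=\langle(P(\omega_j)-P(\omega_k))u_j,u_k\rangle=(\omega_k^2-\omega_j^2)\langle\kappa^2u_j,u_k\rangle$. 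Hence the normalized $u_j$ tend weakly to $0$, so $\langle Ku_j,u_j\rangle\to 0$, contradicting (ii).
\end{enumerate}
You had the ingredient for (iii) but never combined it with (i) and (ii).
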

  \begin{remark}
    Proposition~\ref{prop4} shows that these hypotheses are satisfied if either
    \begin{equation}
      \sqrt{2}\|d_2\log\kappa_j\|_{L^{\infty}}<\omega\kappa_0,\text{
        or }
   \frac{\kappa_j^{\max}}{\kappa_0}<\sqrt{2},
    \end{equation}
    for $j=1,\dots,m.$ It seems likely that these are not sharp and
    that uniqueness will hold much more generally.
    
    Using standard elliptic estimates one can show that if
    $2\|x\cdot\nabla_x\log\kappa\|_{L^{\infty}}<1,$ then the set $\cE$
    is bounded and therefore finite. There is a similar statement for
    incoming solutions.
  \end{remark}
  As a corollary we have radiation conditions for Maxwell's equations.
  \begin{corollary} Assume that $\cH^{\epsilon,\mu}_{\omega}$ satisfies a Mourre estimate at
    $\omega^2\kappa^2_0,$ and $\omega\notin\cE;$ let $(\xi,\eta)$ be a
    solution to Maxwell's equations~\eqref{eqn7}. If the pair
    $(\xi,\eta)$ satisfies the Sommerfeld radiation condition over
    $\mf,$ with $k_0=\omega\kappa_0,$ and the conditions
    in~\eqref{eqn132.337}, then $(\xi,\eta)=(0,0).$ If $\omega\in\cE,$
    then such a solution belongs to a finite dimensional space of
    Schwartz class eigenfunctions.
  \end{corollary}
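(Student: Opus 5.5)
The plan is to reduce the corollary to Theorem~\ref{RadCondMax} via the rescaling \eqref{eqn14.40} and the passage from the first order system to the second order system \eqref{eqn27.2}.

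\emph{Step one: translate the hypotheses.} Given a solution $(\xi,\eta)$ of \eqref{eqn7}, set $\txi=\mu^{-1/2}\xi$ and $\teta=\epsilon^{-1/2}\eta$. Then $(\txi,\teta)$ satisfies \eqref{eqn4}, equivalently \eqref{eqn73.335}. Since the $\Lambda^0$ and $\Lambda^3$ components of $\fL^{\epsilon,\mu}[0,\txi,\teta,0]^T$ are the rescaled divergence expressions, these vanish by \eqref{eqn13}. Applying $\fL^{\epsilon,\mu}$ again gives \eqref{eqn24.30}, and hence \eqref{eqn27.2}. By the degree splitting of $(\fd+\fd^*)^2$, this yields separately
\[
\kappa^2\cD_1^{\epsilon,\mu}\txi=\omega^2\kappa^2\txi,\qquad
\kappa^2\cD_2^{\epsilon,\mu}\teta=\omega^2\kappa^2\teta,
\]
which is exactly \eqref{eqn131.337}.

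\emph{Step two: check the radiation hypotheses transfer.} Away from the channels, that is near $\mf$ (where $d\epsilon$ and $d\mu$ vanish), $\epsilon$ and $\mu$ equal the constants $\epsilon_0$ and $\mu_0$. So $\txi$ and $\teta$ are constant multiples of $\xi$ and $\eta$ there, and the Sommerfeld condition with $k_0=\omega\kappa_0$ transfers directly to their components.

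Near $\ff_j$, the factors $\mu_j^{-1/2}(x')$ and $\epsilon_j^{-1/2}(x')$ are smooth, positive, and independent of $x_1$. Their ratio to the constant value is bounded and, outside a compact set in $x'$, equal to one. Multiplication by such a function is an element of $\Psi^{0,0}_{3\Sc}$ whose indicial operator at $\ff_j$ is multiplication by an invertible function on $\ff_j$, independent of $\tau$. Such an operator is elliptic, so microlocally at any $\tau$ it preserves $\WF^{*,l}_{3\Sc}$. Hence the conditions in \eqref{eqn132.337}, which the statement interprets for the rescaled fields (or equivalently for $(\xi,\eta)$), hold for $\txi$ and $\teta$ with the same $l>-\frac12$.

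\emph{Step three: conclude.} If $\omega\notin\cE$, Theorem~\ref{RadCondMax} gives $\txi=\teta=0$, hence $(\xi,\eta)=(0,0)$. If $\omega\in\cE$, the theorem places $(\txi,\teta)$ in the finite dimensional space of $L^2$-eigenfunctions. The discussion preceding that theorem, using propagation of singularities as in Appendix~\ref{prop_est}, shows these eigenfunctions have empty 3-scattering wave-front set and are therefore Schwartz. Multiplying back by the smooth, symbolically bounded factors $\sqrt{\mu}$ and $\sqrt{\epsilon}$ preserves the Schwartz class. The image of a finite dimensional space under this linear map is again finite dimensional.

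The only step I expect to need care is step two: confirming that the rescaling is a $3\Sc$-elliptic multiplier with $\tau$-independent invertible indicial operator, so that wave-front conditions at $R^{i,-}_{k_0^2,\ff_j}$ are unaffected. This amounts to noting that $\epsilon_j^{\pm1/2}$ and $\mu_j^{\pm1/2}$ lie in $\cC^\infty(\obbRd_{\WG})$ and are bounded below.
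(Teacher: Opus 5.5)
Your proposal is correct and follows the same route as the paper. The paper's proof is the single observation that the rescaling \eqref{eqn14.40} turns a solution of \eqref{eqn7} into solutions of \eqref{eqn131.337}, after which Theorem~\ref{RadCondMax} applies.

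Your steps two and three supply details the paper leaves implicit, and you handle them correctly. First, multiplication by $\epsilon^{\pm 1/2}$, $\mu^{\pm 1/2}$ is an invertible $3$-scattering multiplier with $\tau$-independent indicial operator, so the Sommerfeld and wave-front hypotheses transfer between $(\xi,\eta)$ and $(\txi,\teta)$. Second, multiplying back preserves Schwartz class and finite dimensionality.
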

  \begin{proof}
    The corollary follows from the theorem because, with the rescaling
    in~\eqref{eqn14.40}, a solution to~\eqref{eqn7} automatically gives
    solutions to the equations in~\eqref{eqn131.337}.
  \end{proof}

  \noindent
   A detailed proof of Theorem~\ref{RadCondMax} is given in
   Section~\ref{RadCondPf}. For solutions of Maxwell's equations there
   is a more refined description of the wave-front set given by the
   Silver-M\"uller radiation conditions, see~\eqref{MaxRadCond}
   and~\cite[Theorem 4.4]{ColtonKress}. These conditions will hold in
   the present case for directions along the $\mf$. In the channel
   directions the principal terms in the asymptotic expansion are
   outgoing wave-guide modes. The lower order terms probably satisfy
   the Silver-M\"uller conditions.

   \subsection{Proof of Theorem~\ref{RadCondMax}}\label{RadCondPf}
The proof of Theorem~\ref{RadCondMax} uses several standard
consequences of the Mourre estimate,~\eqref{eq:Mourre-0}, as well as
results from Chapter 17 of~\cite{VasyAsterisque}. The needed
modifications to the proofs of these results are described in
Appendix~\ref{prop_est}.  Once these things are taken care of, there
is still a subtlety that needs to be dealt with connected to the fact
that  both the operators $\cH^{\epsilon,\mu}_{\omega}$ and the eigenvalue of
interest, $\omega^2\kappa_0^2,$ depend on $\omega.$

Let $u$ be a solution to
$\cH^{\epsilon,\mu}_{\omega}u=\omega^2\kappa_0^2u,$ which satisfies the
hypotheses of Theorem~\ref{RadCondMax}.  The hypotheses of the theorem
imply that the hypothesis (17.41) in Proposition 17.8
of~\cite{VasyAsterisque} holds and that, in the notation
of~\cite{VasyAsterisque}, $(H-\omega^2\kappa_0^2)u=0.$ As noted above, we cannot
conclude that $u=0,$ but the argument in~\cite{VasyAsterisque} does imply
that $\WF_{3\Sc}(u)=\emptyset,$ and therefore
$u\in\cS(\overline{\bbR^3}_{\WG}),$ and hence is an
$L^2$-eigenvector.

The question now is whether
$(\cH^{\epsilon,\mu}_{\omega}-\omega^2\kappa_0^2)$ can have
non-trivial $L^2$-solutions. We have assumed that
$\cH^{\epsilon,\mu}_{\omega}$ satisfies a Mourre estimate at
$\omega^2\kappa_0^2,$ either because $\omega^2\kappa_0^2\in\cI_M,$ or
because the contrast ratios are small enough. It is a standard
consequence of this fact that there is at most a discrete set
$\{\lambda_k\}$ near to $\omega^2\kappa_0^2$ for which
$(\cH^{\epsilon,\mu}_{\omega}-\lambda_k)$ has non-trivial
$L^2$-solutions; each eigenspace is finite dimensional,
see~\cite{CFKS}. We now show that the set of exceptional frequencies,
$\cE,$ for which $\cH^{\epsilon,\mu}_{\omega}$ can have an
$L^2$-eigenfunction at $\lambda=\omega^2\kappa_0^2,$ is discrete.

We consider the 1-form part of $\cH^{\epsilon,\mu}_{\omega};$ the
analysis of the 2-form part is essentially identical.  The Mourre estimate for
$$\cH^{\epsilon,\mu}_{1,\omega}=\kappa^2
\cD^{\epsilon,\mu}_1-\omega^2(\kappa^2-\kappa_0^2)$$ localized at
energy $\omega^2\kappa_0^2 $ can be thought of as a positive
commutator estimate for $P(\omega)=\kappa^2
\cD^{\epsilon,\mu}_1-\omega^2\kappa^2$ at energy 0. With $A$ defined
in~\eqref{eqn134.338}, there is a $c,\delta>0$ so that if
$\phi\in\cC_c^{\infty}((-\delta,\delta)),$ then
$$i\phi(P(\omega))([A,P(\omega)]\phi(P(\omega))\geq
c\phi(P(\omega))^2-K,$$
where $K$ is compact or indeed finite rank.

Note that this bound is locally uniform in the parameter $\omega:$
if it holds for some $\omega,$ then replacing $\omega$ with a  nearby
$\omega'$ changes the LHS-$c\phi(P(\omega))^2$  by a bounded operator
of norm $\leq C|\omega-\omega'|.$  Hence
$$i\phi(P(\omega'))([A,P(\omega')]\phi(P(\omega'))\geq
c\phi(P(\omega'))^2-C|\omega-\omega'|-K.$$
If $|\omega-\omega'|$ is
sufficiently small, then multiplying through by $\psi(P(\omega'))$
from left and right, with $\psi$ of slightly smaller support than
$\phi,$ gives a lower bound with $K$ replaced by
$\psi(P(\omega')K\psi(P(\omega')),$ and $c$ replaced by $c/2.$ That
is, for $\phi$ with small enough support near zero, and
$|\omega-\omega'|$ sufficiently small, we have the estimates
\begin{equation}\label{eqn138.336}
  i\phi(P(\omega'))([A,P(\omega')]\phi(P(\omega'))\geq
  \frac{c}{2}\phi(P(\omega'))^2-\phi(P(\omega'))K\phi(P(\omega')).
  \end{equation}

Now suppose that $\cE$ has an accumulation point, with a sequence of
distinct frequencies $\{\omega_j\}\subset\cE$ converging to
$\omega^*,$ and $\cH^{\epsilon,\mu}_{\omega^*}$ satisfies a
Mourre estimate at $[\omega^*\kappa_0]^2$ We consider the family of
estimates like~\eqref{eqn138.336} for $|\omega-\omega^*|$ sufficiently
small. Let $u_j$ be a normalized $L^2$ eigenfunction of
$\cH^{\epsilon,\mu}_{1,\omega_j}$ with energy $\omega_j^2\kappa_0^2.$
These are also eigenvectors of $P(\omega_j)$ with energy 0, so in
particular can be written as $u_j=\phi(P(\omega_j))u_j $ where $\phi$
equals 1 near to 0.  Hence the estimates in~\eqref{eqn138.336} imply
that, for $|\omega^*-\omega_j|$ sufficiently small,
$$i\langle [A,P(\omega_j)]u_j,u_j\rangle\geq
\frac{c}{2}\|u_j\|^2-\langle Ku_j,u_j\rangle.$$ By the virial theorem,
the left hand side is 0 because $u_j$ is Schwartz and
$P(\omega_j)u_j=0.$ On the other hand
$$0=\langle (P(\omega_j)-P(\omega_k))u_j,u_k\rangle=\langle
\kappa^2(\omega_k^2-\omega_j^2)u_j,u_k\rangle.$$
Since $\omega_j^2\neq\omega_k^2,$ we see that
$$\langle u_j,u_k\rangle_{\kappa^2}=\langle \kappa^2 u_j,u_k\rangle=0.$$

As $\kappa$ is bounded above and below by positive constants, the
$\kappa^2$ and standard inner products are equivalent, and therefore
\begin{equation}\label{eqn139.336}
0=i\langle [A,P(\omega_j)]u_j,u_j\rangle\geq
c'\|u_j\|_{\kappa^2}^2-\langle
\kappa^{-2}Ku_j,u_j\rangle_{\kappa^2}.
\end{equation}
The $\kappa^2$-orthonormality of the $\{u_j\}$ implies that $\{u_j\}$
converges weakly to zero, in the $\kappa^2$-sense.  The compactness of
$K$ implies that $Ku_j\to 0$ strongly. However
\begin{equation}
  \liminf_{j\to\infty}\|u_j\|_{\kappa^2}^2=m>0,
\end{equation}
which contradicts~\eqref{eqn139.336} and shows that $\cE$ cannot have accumulation points.



\section{A Generalized Mourre Estimate}\label{ss.mourre}

In Section~\ref{Max_rad_cond} we extended the radiation conditions to
Maxwell wave-guides by replacing the first order Maxwell system with
the second order system~\eqref{eqn27.2}. As observed
in~\eqref{eqn34.40}, the first order part of the indicial operator
along $\ff$ of the perturbation has a term that depends linearly on
$\tau.$ This is something explicitly excluded in prior analyses and
requires an extension of the Mourre estimate. Thus far, we cannot
prove this estimate in full generality, but require a smallness
hypothesis on this term. We do prove this result in the full
generality of the $N$-body problem, as it requires only slightly more
effort than would be required in the 3-body case. We let $X$ denote a
closed manifold with a metric, $g,$ that is a perturbation of a scattering
metric, $g_0,$ which is of the form given in~\eqref{metric0}. In this
section $s$ denotes a defining function for $\pa X.$

Here we extend the Mourre estimate of \cite[Appendix]{VasyExpDecay},
proved there in a geometric setting; the extension is that {\em we do
  not assume the product structure of the indicial operators.} The
notation in this section and Appendix~\ref{MourreEstGenCase} follows
that in~\cite{VasyExpDecay}. {\Bl The Hamiltonians considered in the
  actual $N$-body case, where the underlying space is the
  compactification of $\bbR^d$, has collision planes that are
  orthocomplements $\{X_a:\ a\in\cA\}$ of the linear subspaces
  $\{X^a:\: a\in\cA\},$ indexed by a set $\cA.$ The subsystem
  Hamiltonians ``live'' on the $\{X^a\}$, that is they act on function
  spaces on these geometric spaces. By convention, the partial order
  is based on the inclusions of the $X^a$.  For the minimal element,
  $\{0\},$  indexed by $a=0$,  $X_0$ is the whole space $\bbR^d,$
  with $\pa\overline{X_0}=C_0=\pa\overline{\RR^d}$, $X^0=\{0\}.$ 
  The maximal element $\bbR^d$ is indexed by $1,$ so that $X_1=\{0\}$
  is disjoint from the boundary $\pa\overline{\bbR^d}.$ One usually
  assumes that the corresponding interaction vanishes; otherwise it
  would simply change the spectral parameter. It is convenient to
  assume that the set $\{X_a:\ a\in\cA\}$ is closed under interactions. If $H$
  is the Hamiltonian, and $V_a$ are the perturbations of
  $\Delta_{\bbR^d},$ which depend only on variables in $X^a,$ then the
  subsystem Hamiltonians are
  \begin{equation}
    H^a=\Delta_{X^a}+\sum_{X^b\subset X^a}V_b.
  \end{equation}
  Note that, as $X^0=\{0\},$
  \begin{equation}
    H^0=0,
  \end{equation}
  see~\cite{Vasy:Propagation-Many}.
  
The 2-body case corresponds to having only 2 levels, with $X_0$ and
$X_1$ the only collision planes. The maximal totally ordered
subset of $\{X_a:\ a\in\cA\}$, i.e.\ chain, has length 2. The
3-body case occurs if $a\neq b$, and  $a,b\neq 0,1$, implies that $X_a\cap X_b=\{0\}.$
Hence the collision planes at infinity, $C_a=\pa\overline{X_a}$, are
disjoint (for $a\neq 0$). The maximal chain of the $X_a$ has
length 3. Wave guides are examples of  3-body problems with the
additional restrictions that $X=\bbR^2$ or $X=\bbR^3$, and  the
$X_a$, for $a\neq 0,1$, are 1-dimensional. The triviality of
their intersections is automatic. Correspondingly, at $\pa X$, the
hierarchy consists of $\pa X_0=\pa X$, and $\pa X_a$ ($a\neq 0,1$),
which each consist of two antipodal points, the channel ends.} \Bk

\subsection{The Product Case}

In~\cite{VasyExpDecay} it was assumed  that, restricted to a front
face, $g-g_0$ is a section of $T^*X^a\otimes T^*X^a$, i.e.\ depends on
the interaction variables only over $C_a$, and that for each $y_a\in
C_a$, $\hat V_a(y_a,\xi_a) \in\Diff^1(X^a;E)$ is independent of
$\xi_a$.  In other words, the assumption was that all indicial
operators of $H$ are pointwise in $C_a$ of `product-type', i.e.\ have the
form
\begin{equation}\label{eq:H_a-ind-pt-pr}
\hat H_a(\xi_a)=|\xi_a|^2_{y_a}\otimes\Id_E+H^a(y_a),
\end{equation}
where $H^a(y_a)\in\Diff^2(X^a;E)$, does not depend on $y_a,$ which is the variable along
$C_a$. In the wave-guide case, each $C_a$ reduces to a
channel end, which are isolated points, and $\xi_a\leftrightarrow\tau.$

Before discussing the extension, let us recall the Mourre estimate in
this `product' setting.
Let $\Lambda$
denote the thresholds of $H$, i.e.\ the set of the $L^2$ eigenvalues
of all of its subsystem Hamiltonians $H^a$;
this is a closed countable subset of
$\RR$. (Note that $0$ is an eigenvalue of $H^0$, hence it is a threshold
of $H$; if no non-trivial subsystem has an $L^2$-eigenvalue then
$\Lambda=\{0\}$.)
Let
\begin{equation*}
d(\lambda)=\inf\{\lambda-\lambda':\lambda'\leq\lambda,\ \lambda'\in\Lambda\}
\end{equation*}
be the distance of $\lambda$ to the next threshold \emph{below} it. If
$\lambda<\inf\Lambda,$ then $d(\lambda)$ is an arbitrary positive number.

Let $\chi\in\Cinf_c([0,1))$ be supported near $0$, identically $1$ on
  a smaller neighborhood of $0$. Recall that $s$ is a defining function
  for $\pa X;$ let $D_s=-i\pa_s,$ and
  \begin{equation}\label{eqn134.338}
     A=\half(\chi(s) sD_s+(\chi(s)sD_s)^*).
   \end{equation}
   The Mourre estimate at $\lambda$ is the statement that for all
  $\epv>0$ there is $\delta>0$ such that for all
  $\phi\in\CI_c((\lambda-\delta,\lambda+\delta);[0,1])$
\begin{equation}\label{eq:Mourre-00}
\phi(H)i[A,H]\phi(H)\geq 2(d(\lambda)-\epv)\phi(H)^2+K',
\end{equation}
$K'\in\PsiSc^{-\infty,1}(\Xb,\cC)$. Note the $K'$ is a compact
operator. The key point is that, for small enough $\epv>0,$
$d(\lambda)-\epv>0.$

\subsection{The Non-product Case}
We now consider Hamiltonians that do not possess the product structure in
the sense that $\hat V_a(y_a,\xi_a)
\in\Diff^1(X^a;E)$ is {\em not} required to be independent of
$\xi_a$. (We assume that $V_0=0$ as usual.) More explicitly then, in view of $V$ being a first order
differential operator,
$$
\hat V_a(y_a,\xi_a)=\hat
V^0_a(y_a)+\hat V^1_a(y_a)\xi_a,
$$
where $\hat V^0_a(y_a)$ is a first order operator tangent to the fibers of
the front face and $\hat V^1_a(y_a)$ is a multiplication operator.
{\Bl We need to make assumptions on operator
$$
H^a(y_a,\xi_a)=\Delta_{g^a}+\hat V(y_a,\xi_a).
$$ The precise nature of these assumptions depends on whether the
eigenspaces of the operator $H^a(y_a,\xi_a)$ are finite
dimensional. The reason the dimension plays a role is ultimately due
to compactness considerations; if $\phi\in\cC^\infty(\bbR;[0,1])$ with
support in $(\lambda'-\delta,\lambda'+\delta)$, $\phi(\lambda')=1$,
then as $\delta\to 0$, $\phi(H^a)$ converges strongly $E$, where $E$
is the spectral projection for $\lambda'.$ Thus for $K$ compact,
$(\phi(H^a)-E)K\to 0$ in norm. On the range of $E$ one can make the
assumption we make in the finite dimensional case, but the pairings
between the range of $E$ and its orthocomplement are only easy to
control if $E$ is finite rank. Ultimately this again relies on
compactness considerations. In the general infinite rank case we need
to make an extra smallness assumption. Weaker hypotheses suffice in the finite rank
case, as well as in the infinite rank case with an additional
compactness property.

We start with the `ideal' case.

\begin{definition}\label{IdMourreConst}
 Suppose $\lambda\in\RR$. We say that the {\em ideal Mourre constant} at the level of an eigenvalue
$\lambda'=\lambda-|\xi_a|^2$ of $H^a(y_a,\xi_a)$ with eigenprojector $E$ is $M^a_{(y_a,\xi_a)}(\lambda)$
\begin{equation}\label{eq:ideal-Mourre-constant}
M^a_{(y_a,\xi_a)}(\lambda)=2
(\lambda-\lambda')+C^a_{(y_a,\xi_a)}
\end{equation}
where $C^a_{(y_a,\xi_a)}(\lambda)$ is the largest constant (i.e.\ the
$\sup$ of all constants) such that
\begin{equation}\label{eq:ideal-Mourre-constant-2}
\lambda-\lambda'=|\xi_a|^2\Rightarrow
|\xi_a|E\Big[\frac{\xi_a}{|\xi_a|}\pa_{\xi_a},H^a\Big]E\geq C^a_{(y_a,\xi_a)}(\lambda) E.
\end{equation}
In case $\lambda'=\lambda-|\xi_a|^2$ is not an eigenvalue of
$H^a(y_a,\xi_a)$, we set $C^a_{(y_a,\xi_a)}(\lambda),$ and
$M^a_{(y_a,\xi_a)}(\lambda)$ to be $+\infty$.

We say that the ideal Mourre constant at the level of a subsystem
$a\in\cA$ at energy $\lambda$ is
\begin{equation}\label{Ma_def}
M^a(\lambda)=\inf_{(y_a,\xi_a)}M^a_{(y_a,\xi_a)}(\lambda),
\end{equation}
understood to be $+\infty$ if $\lambda-|\xi_a|^2\notin \sigma_{\pp}(H^a(y_a,\xi_a))$ for all
$(y_a,\xi_a)$.
  \end{definition}

  The slightly artificial phrasing of
  \eqref{eq:ideal-Mourre-constant}-\eqref{eq:ideal-Mourre-constant-2}
  becomes clearer by replacing $H^a$
  with
  $$
  \hat H_a(y_a,\xi_a)=|\xi_a|^2_{g_a}+H^a(y_a,\xi_a);
  $$
  then under the assumption $|\xi_a|^2=\lambda-\lambda'$ the right
  hand side of \eqref{eq:ideal-Mourre-constant} becomes the largest
  constant $M^a_{(y_a,\xi_a)}(\lambda)$
  \begin{equation}\label{eq:Mourre-hyp-H_a}
|\xi_a|E\Big[\frac{\xi_a}{|\xi_a|}\pa_{\xi_a},\hat H_a(y_a,\xi_a)\Big]E\geq M^a_{(y_a,\xi_a)}(\lambda)E.
\end{equation}

If $H^a$ is actually independent of $\xi_a$, as in the
product setting, then $[\frac{\xi_a}{|\xi_a|}\pa_{\xi_a},H^a]=0$,
so
$$
M^a_{(y_a,\xi_a)}(\lambda)=2(\lambda-\lambda'),
$$
understood to be $+\infty$ if $\lambda'=\lambda-|\xi_a|^2$ is not an
eigenvalue of $H^a$, and correspondingly
\begin{equation}
M^a(\lambda) =2(\lambda-\lambda'),
\end{equation}
with the same convention.

To give a more spectral theoretic meaning to the assumption, for a
point $(y_a^0,\xi_a^0)$ suppose that
$|\xi_a^0|^2=\lambda-\lambda'(y_a^0,\xi_a^0)$, $E(y_a^0,\xi_a^0)$ is
the orthogonal projection to the $\lambda'(y_a^0,\xi_a^0)$-eigenspace,
and suppose that there are smooth functions
$\lambda'=\lambda'(y_a,\xi_a)$ and $E=E(y_a,\xi_a)$ taking these
values at $(y_a^0,\xi_a^0)$. Then for any
$\psi(y_a,\xi_a)$ in the $\lambda'(y_a,\xi_a)$
eigenspace,
\begin{equation*}\begin{aligned}
\Big\langle
\psi,\Big[\frac{\xi_a}{|\xi_a|}\pa_{\xi_a},H^a\Big]\psi \Big\rangle&=\Big\langle
\psi,\frac{\xi_a}{|\xi_a|}\pa_{\xi_a}H^a\psi \Big\rangle-\Big\langle\psi,
H^a\frac{\xi_a}{|\xi_a|}\pa_{\xi_a}\psi \Big\rangle\\
&=\Big\langle
\psi,\frac{\xi_a}{|\xi_a|}\pa_{\xi_a}\lambda'\psi \Big\rangle-\Big\langle\lambda'\psi,\frac{\xi_a}{|\xi_a|}\pa_{\xi_a}\psi \Big\rangle\\
&=\Big\langle \psi, \frac{\xi_a}{|\xi_a|}(\pa_{\xi_a}\lambda')\psi \Big\rangle.
    \end{aligned}\end{equation*}
Note that, in the interesting cases for the waveguide setting,  $a\neq
0,1$,  $|\xi_a|=|\tau|$, and
$\frac{\xi_a}{|\xi_a|}\pa_{\xi_a}=\sign(\tau)\pa_\tau$. The ideal
Mourre constant is the largest $M^a(\lambda)$ so that
$$
\tau^2=\lambda-\lambda'\Rightarrow 2|\tau|^2+\tau\pa_\tau\lambda'\geq M^a(\lambda).
$$
This is implied by the group velocity condition in \eqref{eqn139.500}
with
$$
M^a(\lambda)=\inf
\sqrt{\lambda-\lambda_l(\tau_0)}(\dot\lambda_l(\tau_0)+2\tau_0).
$$
The ideal Mourre constant plays essentially the same role as the
constant `$c$' in the definition of Mourre interval given in
Remark~\ref{rmk8.500}.

Next, we explain the smallness conditions that may be needed when
eigenspace is finite dimensional.

\begin{definition} We say that $\hat V_a$ is $\tilde\delta_{1,a}$-small if the zeroth order,
`non-product', part of $\hat V_a$, namely $\hat V^1_a(y_a)\xi_a$ satisfies
\begin{equation}\label{eq:hat-V-1-est}
  |\hat V^1_a|\leq \tilde\delta_{1,a};
\end{equation}
notice that for $a=0$, $\tilde\delta_{1,0}=0$.
\end{definition}
} \Bk


{\Bl Returning now to the general $N$-body case, in fact, we do not
  always need smallness estimates; indeed in the $3$-body setting we
  never need them. To explain this, we introduce further notation.

  \begin{definition}\label{def:microlocal-Mourre-loss}
We say that the effective microlocal Mourre loss at energy $\lambda$
at $(y_a,\xi_a)$, $\delta^a(y_a,\xi_a;\lambda)$, is given as follows: let
$\lambda'=\lambda-|\xi_a|^2$  with spectral
projector $E$ of $H^a(y_a,\xi_a)$ to $\lambda'$.
\begin{enumerate}
  \item
  If $E$ is finite rank (including if $\lambda'$ is not an eigenvalue) we set $\delta^a(y_a,\xi_a;\lambda)=0$,
  \item
$E$ is infinite rank and
\begin{equation}\label{eq:Mourre-strongly-adapted-compact}
E\Big[\frac{\xi_a}{|\xi_a|}\pa_{\xi_a},H^a\Big]E
\end{equation}
is compact on $L^2(X^a)$, we set $\delta^a(y_a,\xi_a;\lambda)=0$.
\item
  Otherwise $\delta^a(y_a,\xi_a;\lambda)$ is the inf of constants so that
\begin{equation}\label{eq:Mourre-strongly-adapted}
\Big\||\xi_a|E\Big[\frac{\xi_a}{|\xi_a|}\pa_{\xi_a},H^a\Big]E\Big\|\leq  \delta^a(y_a,\xi_a;\lambda).
\end{equation}
\end{enumerate}

We say that the effective Mourre loss at the level of a subsystem
$a\in\cA$ at energy $\lambda$ is
$$
\delta^a(\lambda)=\sup_{(y_a,\xi_a)}\delta^a(y_a,\xi_a;\lambda),
$$
understood to be $0$ if $\lambda-|\xi_a|^2\notin \sigma_{\pp}(H^a(y_a,\xi_a))$ for all
$(y_a,\xi_a)$.
    \end{definition}

If $a=0$ or a 2-body subsystem (so it has no non-trivial subsystems)
then $\delta^a(\lambda)=0$:
Supposing that $\hat V^1_a$ is negative order and symbolic (of 
order $\alpha<0$), then
$$
\psi(H^a) \hat V^1_a\psi(H^a)\in\Psisc^{-\infty,\alpha}(\overline{X^a})
$$
is compact on $L^2(X^a)$.

In general, if the
`non-product' part of $\hat V_a$, namely $\hat V^1_a(y_a)\xi_a$, satisfies
\begin{equation}
  |\hat V^1_a|\leq \tilde\delta_{1,a},
\end{equation}
then as
$$
\Big[\frac{\xi_a}{|\xi_a|}\pa_{\xi_a},\hat H_a(y_a,\xi_a)\Big]=\Big[\frac{\xi_a}{|\xi_a|}\pa_{\xi_a},\hat V^1_a(y_a)\xi_a\Big]=\frac{\xi_a}{|\xi_a|}\hat V^1_a(y_a),
$$
we immediately see that we can take any $\delta^a(y_a,\xi_a;\lambda)$ with
$$
\sqrt{\lambda-\lambda'}\tilde\delta_{1,a}\leq
\delta^a(y_a,\xi_a;\lambda),\ \lambda'=\lambda-|\xi_a|^2,
$$
and thus we may take
\begin{equation}\label{eq:delta^a-def-small-V}
\delta^a(\lambda)=\sup_{(y_a,\xi_a):\ \lambda-|\xi_a|^2\in\sigma_{\pp}(H^a)}|\xi_a|\tilde\delta_{1,a}.
\end{equation}
In the waveguide case, the condition of
\eqref{eq:Mourre-strongly-adapted} would amount to
$$
|\tau\pa_\tau\lambda'|\leq \delta^a(\lambda),
$$
but as all eigenspaces are finite dimensional, it is not needed.

We define $d(\lambda)$ as
above, with $\Lambda$ no longer necessarily discrete since the
subsystem eigenvalues can vary continuously with $(y_a,\xi_a)$. Recall that
a chain of collision planes, or collision planes at infinity, is a
totally ordered subset of them; we work with the latter, i.e.\
$\{C_a:\ a\in\cA\setminus\{1\}\}$. Let $\cI_a$ be the collection of chains whose elements all contain
$C_a$, and $\cI'_a$ the subset of $\cI_a$ consisting of chains {\em
  not} having $C_a$ itself as an element, so $\cI_0=\{\{C_0\}\}$,
$\cI'_0=\emptyset$. For a chain $S$ consisting of $C_{a_1}\subsetneq
C_{a_2}\subsetneq\ldots\subsetneq C_{a_k}$ let
  $$
\delta_S(\lambda)=\sum_{j=1}^k\delta^{a_j}(\lambda),
$$
and let
$$
\delta_a(\lambda)=\sup_{S\in\cI_a}\delta_S(\lambda),\ \delta'_a(\lambda)=\sup_{S\in\cI'_a}\delta_S(\lambda).
$$
Notice that in the three-body setting we simply have
$$
\delta'_1(\lambda)=\sup_{a\neq 0,1}\delta_{a}(\lambda)=0.
$$

We let
$$
M_a(\lambda)=\inf_{X^b\subset X^a} M^b(\lambda),\ M'_a(\lambda)=\inf _{X^b\subsetneq X^a} M^b(\lambda).
$$
Thus, if $X^c\subsetneq X^b$
\begin{equation}\label{eq:subsystem-Mourre-M-ordering}
M_b(\lambda)\leq M'_b(\lambda)\leq M_c(\lambda)\leq M'_c(\lambda),
\end{equation}
and
$$
M'_b(\lambda)=\inf_{X^c\subsetneq X^b}M_c(\lambda)
$$
  } \Bk

The general
Mourre-type estimate, in our non-product setting, is as follows:
\begin{theorem}\label{Mourre+}
For all $\epv>0$ there is $\delta>0$ such that for all smooth functions
$\phi\in\CI_c((\lambda-\delta,\lambda+\delta);[0,1]),$ we have
\begin{equation}\label{eq:Mourre-0}
\phi(H)i[A,H]\phi(H)\geq {\Bl(M'_1(\lambda)-\delta'_1(\lambda)-\epv)} \Bk \phi(H)^2+K',
\end{equation}
where $K'\in\PsiSc^{-\infty,1}(\Xb,\cC)$ is a compact operator.
\end{theorem}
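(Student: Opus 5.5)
The plan is to follow the inductive scheme of \cite[Appendix]{VasyExpDecay} (itself modeled on the Froese--Herbst and Perry--Sigal--Simon argument), and to modify it only at the points where the product structure \eqref{eq:H_a-ind-pt-pr} was used. The induction runs over the partially ordered set of subsystems $\{X^a\}$. At the level of $a$ the inductive statement is a Mourre estimate for the indicial operators $\hat H_a(y_a,\xi_a)$. It must hold uniformly in $(y_a,\xi_a)$, for energies near $\lambda$, with constant $M'_a(\lambda)-\delta'_a(\lambda)-\epv$, and with a remainder that is compact on $L^2(X^a)$. The final step, $a=1$, is exactly \eqref{eq:Mourre-0}.

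The first step is to compute the indicial operator of $i[A,H]$ at each front face $C_a$. Since $A$ is, near $\pa X$, the radial generator $\half(\chi sD_s+\text{adjoint})$, and $g-g_0$ respects the front faces, the standard calculation gives
\[
\hat N_a(i[A,H])(y_a,\xi_a)=2|\xi_a|^2+|\xi_a|\Big[\frac{\xi_a}{|\xi_a|}\pa_{\xi_a},\hat V_a\Big]+R_a .
\]
The new second term equals $\xi_a\cdot\hat V^1_a(y_a)$; it is absent in the product case. The remainder $R_a$ comes from $y_a$-derivatives (transport along $C_a$), and it is either of lower order at $\mf$ or controlled by the inductive hypothesis for the larger faces $C_b\supsetneq C_a$. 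Next we localize: we insert $\phi(H)$ and use that $\hat N_a(\phi(H))=\phi(\hat H_a)$. We then decompose with the spectral projector $E$ of $H^a(y_a,\xi_a)$ at $\lambda'=\lambda-|\xi_a|^2$ and with $\Id-E$. On the range of $E$, \eqref{eq:Mourre-hyp-H_a} gives the lower bound $M^a_{(y_a,\xi_a)}(\lambda)E$. On the range of $\Id-E$, $\phi(\hat H_a)$ localizes the spectrum of $H^a$ away from $\lambda'$. There we apply the inductive Mourre estimate for $H^a$, which lives on $X^a$ and involves only the subsystems $X^b\subsetneq X^a$; this gives the constant $M'_a-\delta'_a-\epv$, up to compact errors.

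The cross terms $E\,[\cdot]\,(\Id-E)$ are where the non-product structure enters, and they are the main obstacle. When $E$ has finite rank, $\phi(H^a)$ converges strongly to $E$ as the support of $\phi$ shrinks, so the cross terms are compact, up to an error of small norm. When $E$ has infinite rank but \eqref{eq:Mourre-strongly-adapted-compact} is compact, the same argument works. Otherwise we bound the cross terms by \eqref{eq:Mourre-strongly-adapted}, which costs $\delta^a(y_a,\xi_a;\lambda)$. Taking the worst case along a chain gives the total loss $\delta'_1(\lambda)=\sup_S\delta_S(\lambda)$.

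The last step globalizes over $\pa X$. Using a partition of unity adapted to the faces $C_a$, the order relations \eqref{eq:subsystem-Mourre-M-ordering}, and the compactness of the symbol classes, we patch the local estimates together. Each localization error then lies in $\PsiSc^{-\infty,1}$ or has small norm, and the small-norm errors are absorbed into $\epv$ by shrinking $\delta$. The delicate point is uniformity in $(y_a,\xi_a)$ near points where eigenvalues of $H^a$ are created at thresholds, since the bounds above must not degenerate there. To handle this I would first use the lower semicontinuity of $M^a_{(y_a,\xi_a)}$ and a compactness argument on the compact set where $|\xi_a|^2\le\lambda+1$, in the same way the threshold set $\Lambda$ was handled in \cite{VasyExpDecay}.
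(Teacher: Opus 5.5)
\textbf{There is a genuine gap in the infinite-rank case.}

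\textbf{What works.} Your argument is sound when the eigenprojector $E$ of $H^a(y_a,\xi_a)$ at $\lambda'=\lambda-|\xi_a|^2$ has finite rank. Then $E\,i\widehat{[A,H]}_a$, composed with a fixed cutoff, is finite rank. Since $\phi(\hat H_a)(\Id-E)\to 0$ strongly as $\supp\phi$ shrinks, the cross terms tend to zero in norm. You then recover the paper's bound with $F=E$ and $\delta^a=0$, which covers the wave-guide case.

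\textbf{Where it fails.} The argument breaks down exactly where the loss $\delta'_1(\lambda)$ is non-trivial, namely when $E$ has infinite rank. There the cross term $E\,i\widehat{[A,H]}_a\,\phi(\hat H_a)(\Id-E)$ contains $E\,i[w^aD_{w^a},H^a]\,\phi(\hat H_a)(\Id-E)$. This term is present already in the product case and is in general not compact. So it does not become small as $\supp\phi$ shrinks, and a Cauchy--Schwarz splitting leaves a non-compact error on $\Range(\Id-E)$. The bound \eqref{eq:Mourre-strongly-adapted} controls only the diagonal block $ET_1E$ and says nothing about these off-diagonal terms. Hence ``bounding the cross terms by $\delta^a$'' does not work. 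Nor does ``the same argument'' work when \eqref{eq:Mourre-strongly-adapted-compact} is compact. It is also not the non-product structure that makes these cross terms hard.

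\textbf{The missing idea.} The paper uses Simon's finite-rank replacement. Choose a finite-rank $F\le E$ with $\|(\Id-E)K_b(\Id-E)-(\Id-F)K_b(\Id-F)\|<\epv'$. In the compact case also require $\|(E-F)T_1\|,\|T_1(E-F)\|<\epv'$. Then multiply the compact-remainder estimate \eqref{eq:Mourre-8} by $(\Id-F)\phi(\hat H_b)$. The pieces are then handled as follows.
\begin{itemize}
\item $\Range(E-F)$ simply inherits the constant $M'_b-\delta'_b$ from that estimate.
\item The $F$--$(\Id-E)$ cross terms satisfy $FC+C^*F\ge-\epv_1F-\epv_1^{-1}C^*C$, and $C^*C$ is compact because $F$ has finite rank.
\item On $\Range F$ one has $FTF\ge M^bF$, where
\[
T=E\,i\widehat{[A,H]}_bE=E|\xi_b|\Big[\frac{\xi_b}{|\xi_b|}\pa_{\xi_b},\hat H_b\Big]E .
\]
This identity needs the virial theorem $E\,i[w^bD_{w^b},H^b]E=0$. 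Your formula for $\hat N_a(i[A,H])$ does not display this internal dilation term at all, and it is not a $y_a$-transport term.
\item The only genuinely non-product loss is $FT_1(E-F)+(E-F)T_1F\ge-\|T_1\|E\ge-\delta^b(\lambda)E$. The part $T_0=2|\xi_b|^2E$ is scalar on $\Range E$, so it produces no cross terms.
\end{itemize}

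\textbf{The inductive statement.} For the same reason, the inductive statement must be the remainder-free indicial estimate \eqref{eq:Mourre-1}, with constant $M_b-\delta_b$, for each $b\neq 1$. An estimate whose remainder is merely compact on $L^2(X^a)$ for each $(y_a,\xi_a)$ cannot be passed up to the next level. The version with $M'_b-\delta'_b$ and compact $K_b$ is only the intermediate output of the square-root construction of \cite[Proposition~8.2]{Vasy:Propagation-Many}.
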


{\Bl
\begin{remark} 
Notice that if $\lambda\notin\Lambda$ and all the
$\{\tilde\delta_{1,a}\}$ are sufficiently small, then
$M'_1(\lambda)-\delta'_1(\lambda)>0$. Given any
$\{\tilde\delta_{1,a}\}$, as  $C^a_{(y_a,\xi_a)}(\lambda)\lesssim
\sqrt{\lambda-\lambda'},$ the same estimate holds for sufficiently large
$\lambda>0$. 
\end{remark}
} \Bk 
\noindent
This theorem is proved in Appendix~\ref{MourreEstGenCase}.
\begin{remark}
 As we see below, in the wave-guide setting the Mourre estimate holds
 at $\lambda=\omega^2\kappa_0^2$ if
 \begin{equation}\label{eqn93.334}
  \sqrt{2} \|d\log\ellk\|_{L^{\infty}}<\omega\kappa_0;
 \end{equation}
it also holds at $\lambda=\omega^2\kappa_0^2,$ for any $\omega,$
if the contrast ratios satisfy~\eqref{cntrst_ratio_est}.
In these cases a simpler proof of the Mourre estimate is available,
which we outline in Appendix~\ref{MourreEstMaxWG}.
\end{remark}

\newpage
\appendix
\centerline{\LARGE{\bf Appendices}}
\section{Proof of the Mourre estimate for Maxwell
  Wave-guides}\label{MourreEstMaxWG} In this appendix we give an
outline of the proof of the Mourre estimate for the special case of a
1-form, Maxwell or vector Helmholtz wave-guide operator
\begin{equation}
  H=\kappa^2 \cD^{\epsilon,\mu}_{1}+\omega^2(\kappa_0^2-\kappa^2),
\end{equation}
 for energies $\lambda\in\IM,$ see Remark~\ref{rmk8.500}. The operator $\kappa^2
 \cD^{\epsilon,\mu}_{1}$ is defined in~\eqref{eqn32}. To simplify the
 proof, we make the additional assumptions:
 \begin{equation}\label{eqn136.224}
   \begin{split}
   &\text{For } j=1,\dots,m,\quad 0\notin\sigma_{pp}(L_{\pm\sqrt{\lambda},j}),\\
&\text{the families gap } \Gamma_0(\lambda,\delta_0)>0,\text{ for some }\delta_0>0.
\end{split}
 \end{equation}
 The second assumption holds if $L_{\pm\sqrt{\lambda},j}$ does not have a resonance at $0.$
  We let $A=\frac i2[x\cdot\pa_x+\pa_x\cdot x],$ {\Bl which agrees with the
  definition in~\eqref{eqn134.338} as $x$ here is the standard
  coordinate on Euclidean space.} \Bk In this setting the Mourre
  estimate is the following:
\begin{theorem} Fix a $c_0>0,$ and 
 assume that~\eqref{eqn136.224} holds.  Given
 $\lambda\in\IM(c_0),$ 
there are positive constants $C_0$ and $\delta<\delta_0,$ so
that if $\phi\in\cC^{\infty}_c(\bbR)$ has $\supp\phi\subset
(\lambda-\delta,\lambda+\delta),$ with $\phi(t)=1$,\hspace{2 pt} for $
t\in (\lambda-\frac{\delta}{2},\lambda+\frac{\delta}{2}),$ then
\begin{equation}\label{eqn128.222}
  i\langle\phi(H)[A,H]\phi(H)\psi,\psi\rangle\geq
  C_0\|\phi(H)\psi\|^2+
  \langle\psi, K\psi\rangle,
\end{equation}
where $K$ is a compact operator.
\end{theorem}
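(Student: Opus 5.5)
We will prove the estimate by contradiction, combined with a microlocal partition of unity subordinate to the 3-scattering structure of $\pa\obbR{3}_{\WG}$, following the scheme of the Froese--Herbst/Vasy argument.

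\textbf{Reduction to the boundary.} First we compute $i[A,H]$. For the free part, $i[A,\Delta\otimes\Id]=2\Delta\otimes\Id$. The commutator of $A$ with $Q^{\epsilon,\mu}$ and with the potential $\omega^2(\kappa_0^2-\kappa^2)$ is again a first order 3-scattering operator. Over $\Int\mf$ its coefficients vanish to infinite order, so it is compact after sandwiching with $\phi(H)$. Since $\phi(H)\in\Psi_{3\Sc}^{-\infty,0}$, any term whose indicial operators vanish on all of $\pa\obbR3_{\WG}$ is compact, so the estimate need only be checked at the normal (indicial) level. Over $\Int\mf$ the indicial operator of $i[A,H]$ is $2(\tau^2+|\mu|^2)=2\lambda$ on the characteristic set, up to $\epsilon$ from the width of $\supp\phi$, and this is positive. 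Near each $\ff_j$, $A$ is asymptotically $\tfrac12(x_1D_{x_1}+D_{x_1}x_1)$ plus terms in $x'$. Because the coefficients are $x_1$-independent, the indicial operator of $i[A,H]$ at $\ff_j$ is $\tau\pa_\tau\hN_{\ff_j,0}(H)(\tau)=2\tau^2+\tau\tQ_{10}^{\epsilon_j,\mu_j}$. Here the $x'$-dilation part commutes to terms that are compactly supported in $Y$, and hence compact after localization to the point spectrum.

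\textbf{Estimate on each front face.} Fix $j$ and $\tau$, and write $P_\tau=L_{\tau,j}+\tau^2$, which is the indicial operator of $H$. Microlocalize $\phi(H)$ in $\tau$, so that its indicial operator is $\phi(P_\tau)$. The assumption $0\notin\sigma_{pp}(L_{\pm\sqrt\lambda,j})$ and the gap $\Gamma_0(\lambda,\delta_0)>0$ show that, for $\delta<\delta_0$ small, $\phi(P_\tau)$ splits into two pieces:
\begin{enumerate}
\item[(a)] a continuous-spectrum piece, where $\tau^2\ge \lambda-\delta$ lies near $\lambda-\lambda'$ with $\lambda'\ge 0$ in the essential spectrum of $L_{\tau,j}$; there the $2\tau^2$ term dominates;
\item[(b)] a finite rank piece $E_l(\tau)$, projecting onto the analytic eigenbranches $\lambda_l(\tau)$ with $\lambda_l(\tau)+\tau^2\in(\lambda-\delta,\lambda+\delta)$.
\end{enumerate}
On the range of $E_l(\tau)$, the Feynman--Hellmann computation \eqref{eqn110.223} gives
\[
E_l\,\tau\pa_\tau P_\tau\,E_l=\tau(\dot\lambda_l(\tau)+2\tau)E_l.
\]
Since $\lambda\in\IM(c_0)$ and $\delta$ is small, continuity yields $\tau(\dot\lambda_l+2\tau)\ge \sqrt{\lambda}\,c_0/2$. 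This step uses the uniform bound $N_{\max}$ from Proposition~\ref{prop2.224} and the no-resonance hypothesis, which together ensure that finitely many branches, bounded uniformly, are involved. The cross terms between $E_l$ and $\Id-E_l$ come from $\tau\tQ_{10}$, which is compactly supported in $Y$. Since $E_l$ has finite rank with Schwartz range, these cross terms are compact on $L^2(\bbR^2)$ and can be moved into $K$ at the indicial level. Their residual error, which is not compact globally on $\bbR^3$, is handled by letting $\delta\to0$: then $\phi(P_\tau)(\Id-E_l)\to0$ strongly, uniformly in $\tau$ on compact sets.

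\textbf{Gluing and main obstacle.} Combine the pieces using a finite partition of unity $\sum B_k^*B_k$ in $\Psi^{0,0}_{3\Sc}$ that microlocalizes near $\Int\mf$, near each $\ff_j$, and in $\tau$ there. The commutators $[B_k,\phi(H)]$ lie in $\Psi_{3\Sc}^{-\infty,1}$ and are therefore compact. Take $C_0=\min(2\lambda,\sqrt\lambda c_0/2,\dots)-\epsilon$. The main obstacle is making step (b) uniform: we must turn the pointwise-in-$\tau$ lower bound for the indicial family into an operator inequality with a compact error. This requires controlling the dependence of $E_l(\tau)$ on $\tau$ near the ends of the allowed interval, and excluding eigenvalues that emerge from $0$. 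For this we rely on the gap assumption and the analyticity of the branches, and we construct the operator inequality through a $\tau$-dependent indicial parametrix, following \cite{VasyExpDecay}.
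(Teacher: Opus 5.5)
\textbf{There is a genuine gap on the front faces.} It comes from an incorrect formula for the indicial operator of $i[A,H]$. Since $A$ generates the full dilation $x\cdot\pa_x$, and not only $x_1\pa_{x_1}$, its indicial operator at $\ff_j$ is
\[
i\hN_{\ff,0}([A,H])(\tau)=[\hH_{\ff}(\tau),Y\cdot\pa_Y]+2\tau^2+\tau\tQ^{\epsilon,\mu}_{10}.
\]
The transverse term is not compactly supported in $Y$: its leading part is $[\Delta^{(2)},Y\cdot\pa_Y]=2\Delta^{(2)}$.

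Dropping this term invalidates your case (a). The continuous-spectrum part of $\phi(\hH_{\ff}(\tau)+\tau^2)$ occurs when $\lambda-\tau^2$ lies within $\delta$ of $[0,\infty)$, i.e.\ for $\tau^2<\lambda+\delta$, not $\tau^2\geq\lambda-\delta$. That region includes $\tau$ near $0$, where $2\tau^2$ gives no positivity at all. There the positivity must come from the two-body (Froese--Herbst) Mourre estimate for the transverse operator, which combined with $2\tau^2$ gives $\phi([\hH_{\ff}(\tau),Y\cdot\pa_Y]+2\tau^2)\phi\geq 2(\lambda-\epsilon/2)\phi^2$.

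You must also show that $\tau\phi\,\tQ^{\epsilon,\mu}_{10}\,\phi$ is negligible in this region. Since $\tQ^{\epsilon,\mu}_{10}$ has norm $2\|d\ellk\|_{L^\infty}$, the term $2\tau^2$ does not dominate it unless $d\ellk$ is small, and $\lambda\in\IM(c_0)$ gives no such smallness. The paper instead proves $\|\phi\,\tQ^{\epsilon,\mu}_{10}\,\phi\|=O(\delta)$ when the spectral window lies in the continuous spectrum, using the small area of the frequency annulus $D_\tau$. It then shrinks $\delta$.

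\textbf{The same omission leaves case (b) incomplete.} On the point-spectrum part you must still control $\chi^{\delta}[\hH_{\ff}(\tau),Y\cdot\pa_Y]\chi^{\delta}$, and being finite rank on $L^2(\bbR^2)$ is not enough. The reduction to the boundary requires the indicial inequality uniformly in $\tau$ with \emph{no} compact remainder, because a $\tau$-family of compact indicial errors is not compact on $L^2(\bbR^3)$. Shrinking $\delta$ does not help either: $E[\hH_{\ff},Y\cdot\pa_Y]E$ does not become small as the window narrows.

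The missing ingredient is the virial theorem. This term vanishes when a single eigenvalue lies in the window. When several do, it is $O(\delta)$ by an approximate virial bound, which uses exponential decay of the eigenfunctions to bound $\|Y\cdot\pa_Y v_k\|$ uniformly in $\tau$.

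Likewise, the off-diagonal entries of $\pa_\tau(\hH_{\ff}+\tau^2)$ between eigenvectors in the window are $(\lambda_j-\lambda_l)\langle\dot v_j,v_l\rangle=O(\delta)$, and this is how the paper controls them. Your strong-convergence argument for $\phi(P_\tau)(\Id-E_l)$ cannot be made uniform near values of $\tau$ where two branches are in the window simultaneously.

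Your use of the group-velocity condition from $\IM(c_0)$ on the diagonal is correct and matches the paper.
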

\begin{remark}
  The argument given below also works at $\lambda=\omega^2\kappa_0^2$
  if the contrast ratios satisfy the estimate
  in~\eqref{cntrst_ratio_est}. For simplicity we give the proof under
  assumption $\lambda\in\cI_M,$ but see Remark~\ref{rmk14.700}.
\end{remark}
\begin{proof}
  For the proof we follow the approach taken in~\cite[Chapter
    12]{VasyAsterisque}, which reduces the proof of the global
  estimate~\eqref{eqn128.222} to proving uniform estimates, at
  infinity in $\overline{\bbR}^3_{\WG}$ for the indicial operators,
  $i\hN_{a,0}([A,H])(\tau),$ where $a=\mf$ or $\ff_j,\,j=1,\dots,m.$
  The argument with $a=\mf$ is trivial because $H$ equals the
  Euclidean Laplacian in a neighborhood of $\mf,$ and therefore
  $i[A,H]=2H.$ We focus on the estimates along the front faces.

  Recall that in a neighborhood of $\ff_j$ the coefficients
  $\epsilon_j,\mu_j$ depend only variables orthogonal to the channel
  direction. We fix a value of $j$ and introduce coordinates so that
  the channel is $\{x_1>r,\, x_2=x_3=0\}.$ To simplify the notation we
  also drop the $j$-subscripts. As coordinates near the
  front face we use
  \begin{equation}
    \rho=\frac{1}{x_1},\,(Y_1,Y_2)=(x_2,x_3).
  \end{equation}
We observe that
\begin{equation}\label{eqn158.502}
  \begin{split}
    i\hN_{\ff,0}([A,H])(\tau)&=[\hN_{\ff,0}(H)(\tau),Y\cdot\pa_{Y}]+2\tau^2+
    \tau\tQ^{\epsilon,\mu}_{10}\\
    &=[\hN_{\ff,0}(H)(\tau),Y\cdot\pa_{Y}]+\tau\pa_{\tau}\hN_{\ff,0}(H)(\tau).
    \end{split}
  \end{equation}
  Because of the term $\tau\tQ^{\epsilon,\mu}_{10},$ this differs from
  the usual case.  Another important difference is the fact that the
  point spectrum of $\hN_{\ff,0}(H)(\tau)$ depends on $\tau.$ The
  family gap, defined in~\eqref{eqn119.224}, satisfies
  \begin{equation}
  \Gamma_0=  \Gamma(\lambda,\delta_0)>0;
  \end{equation}
  we assume that $\delta<\delta_0.$

  We let $\hH_{\ff}(\tau)=\hN_{\ff,0}(H)(\tau),$ which is given
  in~\eqref{eqn48.61}:
  \begin{equation}
   \hH_{\ff}(\tau)= \Delta^{(2)}\otimes\Id_{\Lambda^1\bbR^3}+\tQ_1^{\epsilon,\mu}(\tau)+
    \tQ_0^{\epsilon,\mu}+\omega^2(\kappa_0^2-\kappa^2),
  \end{equation}
  where $\Delta^{(2)}=-[\pa_{Y_1}^2+\pa_{Y_2}^2].$ Using the results
  in Appendix C of~\cite{VasyAsterisque} it suffices to show that
  there is a $C>0$ for which there exists a $\delta>0$ so that, if
  $\supp\phi\subset (\lambda-\delta,\lambda+\delta),$ then
  \begin{multline}\label{eqn144.224}
    \phi(\hH_{\ff}(\tau)+\tau^2)
        \left([\hH_{\ff}(\tau),Y\cdot\pa_Y]+2\tau^2+
          \tau\tQ^{\epsilon,\mu}_{10}\right)
        \phi(\hH_{\ff}(\tau)+\tau^2)\\\geq
        C \phi^2(\hH_{\ff}(\tau)+\tau^2)
  \end{multline}
  holds for all $\tau.$ The operator $\phi(\hH_{\ff}(\tau)+\tau^2)$
  depends only on the spectrum of $\hH_{\ff}(\tau)$ in the interval
  $(\lambda-\tau^2-\delta, \lambda-\tau^2+\delta).$ It therefore
  follows that from Proposition~\ref{prop2.224} that, for $\delta<\delta_0,$
  there is a $\tau'_{\max},$ so that
  \begin{equation}
      |\tau|>\tau'_{\max}\Rightarrow \phi(\hH_{\ff}(\tau)+\tau^2)=0,
  \end{equation}
and therefore, for $|\tau|>\tau'_{\max},$~\eqref{eqn144.224} is
immediate. Because the family gap, $\Gamma_0,$ is non-zero, if
$2\delta<\Gamma_0,$ then $\supp\phi(\hH_{\ff}(\tau)+\tau^2)$ lies
either in the continuous spectrum of $\hH_{\ff}(\tau),$ or in the
point spectrum, but not both. The arguments in the 2 cases are
somewhat different. We first consider the continuous spectrum; in this
case $\tau^2<\lambda+\delta.$

Given $\epsilon>0,$ with $\phi(\hH_{\ff}(\tau)+\tau^2)$ supported in
the continuous spectrum, the fact that there is a $\delta>0,$ so that
 \begin{multline}\label{eqn146.224}
    \phi(\hH_{\ff}(\tau)+\tau^2)
        \left([\hH_{\ff}(\tau),Y\cdot\pa_Y]+2\tau^2\right)
        \phi(\hH_{\ff}(\tau)+\tau^2)\\>
        2(\lambda-\frac{\epsilon}{2}) \phi(\hH_{\ff}(\tau)+\tau^2)^2
 \end{multline}
 is the classical 2-body estimate
 from~\cite{FroeseHerbst1,FroeseHerbst2}. Only the term
 \begin{equation}\label{eqn149.334}
   \tau q_0(\tau)= \tau\phi(\hH_{\ff}(\tau)+\tau^2)
        \tQ^{\epsilon,\mu}_{10}
        \phi(\hH_{\ff}(\tau)+\tau^2)
  \end{equation}
requires further consideration. Because the support of
$\phi(\hH_{\ff}(\tau)+\tau^2)$ lies in the continuous spectrum, we
claim that as $\delta\to 0,$ the norm of this operator tends to
zero. We prove this only in the case of the free space Laplacian.

Let $Q'=\tQ^{\epsilon,\mu}_{10};$ this is a compactly supported,
smooth matrix valued function, and let $\hQ'$ denote its Fourier
transform. For simplicity we replace $\phi(t)$ with
$\chi^{\delta}(t)=\chi_{[\lambda-\delta,\lambda+\delta]}(t).$ The
projection operator then takes the form
\begin{equation}
 \chi^{\delta}(\Delta+\tau^2)\psi=\cF^{-1}\left[\chi_{D_{\tau}}(\xi)\hpsi(\xi)\right],
\end{equation}
where $D_{\tau}=B_{\sqrt{(\lambda+\delta-\tau^2)_+}}(0)\setminus
B_{\sqrt{(\lambda-\delta-\tau^2)_+}}(0),$ and $t_+=\max\{t,0\}.$ Using
the convolution theorem we see that
\begin{equation}
  \cF(q_0(\tau)\psi)(\xi)=\chi_{D_{\tau}}(\xi)\int_{\bbR^2}\hQ'(\eta)\chi_{D_{\tau}}(\xi-\eta)\hpsi(\xi-\eta)d\eta.
\end{equation}

The Cauchy-Schwarz inequality shows that
\begin{equation}
  \left|\langle\chi^{\delta}(\Delta+\tau^2)\psi,q_0(\tau)\psi\rangle\right|\leq
  \|\chi^{\delta}(\Delta+\tau^2)\psi\|\|q_0(\tau)\psi\|.
\end{equation}
The Parseval theorem and the Cauchy-Schwarz inequality imply that
\begin{equation}
  \begin{split}
    \|q_0(\tau)\psi\|^2\leq &\|Q'\|_{L^1}^2\int
    \chi_{D_{\tau}}(\xi)
    \left[\int_{\bbR^2}\chi_{D_{\tau}}(\xi-\eta)d\eta\right]\left[\int
     \chi_{D_{\tau}}(\xi-\eta) |\hpsi(\xi-\eta)|^2d\eta\right] d\xi\\
  \leq &
  |D_{\tau}|^2\|Q'\|^2_{L^1}\|\chi^{\delta}(\Delta+\tau^2)\psi\|^2.
  \end{split}
\end{equation}
For bounded $\tau,$ the area  $|D_{\tau}|<m\delta,$ for a fixed
constant, $m,$ and therefore
\begin{equation}
  \left|\langle\chi^{\delta}(\Delta+\tau^2)\psi,\tau q_0(\tau)\psi\rangle\right|\leq
  m\delta|\tau||Q'\|_{L^1} \|\chi^{\delta}(\Delta+\tau^2)\psi\|^2.
\end{equation}
By choosing $\delta$ so that
$$m\delta\sqrt{\lambda+\delta}\|Q'\|_{L^1}<\epsilon,$$ this term can
be absorbed into the right hand side of~\eqref{eqn146.224}, proving
the existence of $\delta$ in these cases.

Recalling that $\delta<\delta_0,$ we are left to consider $|\tau|\in
(\sqrt{\lambda+\delta},\tau'_{\max}),$ with
$(\lambda-\delta-\tau^2,\lambda+\delta-\tau^2)$ intersecting only the
point spectrum of $\hH_{\ff}(\tau).$ Recall that all the point spectrum of
these operators is negative and is locally parameterized by analytic
functions $\{(\lambda_l(\tau),v_l(\tau)):l=1,\dots, N_{\tau}\},$ where
$N_{\tau}<J,$ for $|\tau|\leq |\tau'_{\max}|.$ The maximum number of
eigenvalues in an interval of length $2\delta$ is denoted by
$N^{\delta}_{\max}.$

As before we simplify the
discussion by working with the projections defined by
$\chi^{\delta}(\hH_{\ff}(\tau)+\tau^2).$
A new difficulty that arises in the present case is that the point
spectrum of  $\hH_{\ff}(\tau)$ depends on $\tau.$ If we make the
assumption that the eigenvalues of $\hH_{\ff}(\tau)$ never cross, then
we can choose $\delta>0$ so that each interval
$(\lambda-\delta-\tau^2,\lambda+\delta-\tau^2)$ contains at most one
eigenvalue. It then follows from the virial theorem that
\begin{equation}\label{eqn170.502}
   \chi^{\delta}(\hH_{\ff}(\tau)+\tau^2)
        [\hH_{\ff}(\tau),Y\cdot\pa_Y]
        \chi^{\delta}(\hH_{\ff}(\tau)+\tau^2)=0,
\end{equation}
therefore we are left to show that there is a positive constant $c$ so
that
\begin{equation}\label{eqn156.334}
  2\tau^2\chi^{\delta}(\hH_{\ff}(\tau)+\tau^2)+\tau
  q_0(\tau)>c\chi^{\delta}(\hH_{\ff}(\tau)+\tau^2),
\end{equation}
see~\eqref{eqn149.334}. As $\tau^2>\lambda\in\IM(c_0)$ it follows that 
there is a positive number $c_0$ so that
\begin{equation}\label{eqn202.700}
 \tau(2\tau+\dot{\lambda}(\tau))>c_0.
\end{equation}

We now need to consider what happens if the interval
$(\lambda-\tau^2-\delta,\lambda-\tau^2+\delta)$ contains several
different eigenvalues. We first estimate of the contribution of the
$\tQ^{\epsilon,\mu}_{10}$-term. As above
$\{(\lambda_j(\tau),v_j(\tau)):j=1,\dots,n\}$ are the eigenpairs with
$\lambda_j(\tau)\in(\tau^2-\lambda-\delta,\tau^2-\lambda+\delta);$ we
know that $n<N^{\delta}_{\max}.$ Since we are only considering a
single $\tau$ at a time, the functions $\{v_j(\tau)\}$ are an
orthonormal set, and therefore
\begin{equation}\label{eqn203.700}
  \langle \pa_{\tau}(\hH_{\ff}(\tau)+\tau^2)v_j,v_l\rangle=
  (2\tau+\dot{\lambda_j}(\tau))\delta_{jl}+(\lambda_j-\lambda_l)\langle\dot{v}_j,v_l\rangle.
\end{equation}
From which it follows that there is a constant, $M,$ so that
\begin{equation}
  \chi^{\delta}(\hH_{\ff}(\tau)+\tau^2)
 \tau
 \pa_{\tau}(\hH_{\ff}(\tau)+\tau^2)\chi^{\delta}(\hH_{\ff}(\tau)+\tau^2)>
    \tau[ c_0-\delta M]\chi^{\delta}(\hH_{\ff}(\tau)+\tau^2).
\end{equation}
Decreasing $\delta$ if needed, gives the desired lower bound.

We cannot use the virial theorem to estimate the left hand side
of~\eqref{eqn170.502}, but there is an approximate virial theorem that we
can use instead. We need to bound the
operator
\begin{multline}
  F_{\delta,\tau}=\chi^{\delta}(\hH_{\ff}(\tau)+\tau^2)
        [\hH_{\ff}(\tau),Y\cdot\pa_Y]
        \chi^{\delta}(\hH_{\ff}(\tau)+\tau^2)=\\
        \chi^{\delta}(\hH_{\ff}(\tau)+\tau^2)
        [\hH_{\ff}(\tau)-\lambda_0,Y\cdot\pa_Y]
        \chi^{\delta}(\hH_{\ff}(\tau)+\tau^2).
\end{multline}
Using the eigenpairs from above, the projection operator is given by
\begin{equation}
  \chi^{\delta}(\hH_{\ff}(\tau)+\tau^2)\psi=\sum_{l=1}^n\langle
  \psi,v_l\rangle  v_l,
\end{equation}
and
\begin{equation}
 Y\cdot\pa_Y (\hH_{\ff}(\tau)-\lambda_0)
 (\chi^{\delta}(\hH_{\ff}(\tau)+\tau^2)\psi=
 \sum_{l=1}^n(\lambda_l-\lambda_0)\langle
  \psi,v_l\rangle Y\cdot\pa_{Y} v_l.
\end{equation}

A computation shows that
\begin{equation}
  \langle\chi^{\delta}(\hH_{\ff}(\tau)+\tau^2)\psi,
  F_{\delta,\tau}\psi\rangle=
  -2\sum_{k,l=1}^{n}(\lambda_l-\lambda_0)\langle\psi,v_l\rangle
  \langle\psi,v_k\rangle\langle Y\cdot\pa_Y v_k,v_l\rangle
\end{equation}
Since all of the eigenvalues are less than $-\Gamma_0,$ the
exponential decay estimates for eigenfunctions show that there is
constant $M$ independent of $|\tau|\in [\lambda,\tau_{\max}]$ and the
particular eigenfunction so that
\begin{equation}\label{eqn165.334}
  \|Y\cdot\pa_Y v_k\|_{L^2}\leq M.
\end{equation}
The Cauchy-Schwarz inequality then shows that, with another constant, $M',$
\begin{equation}\label{eqn166.334} 
  |\langle\chi^{\delta}(\hH_{\ff}(\tau)+\tau^2)\psi,
  F_{\delta,\tau}\psi\rangle|\leq M'\delta
  \|\chi^{\delta}(\hH_{\ff}(\tau)+\tau^2)\psi\|_{L^2}^2.
\end{equation}
Decreasing $\delta$ further completes the proof of the theorem.
\end{proof}
\begin{remark}\label{rmk14.700}
  If $\lambda=\omega^2\kappa_0^2$ and the estimates
  in~\eqref{cntrst_ratio_est} hold, then, for small enough $\delta>0,$
  the estimates in~\eqref{eqn202.700} and~\eqref{eqn203.700} continue
  to hold for some $c_0>0.$ This shows that the Mourre estimate holds
  in this case as well.  
\end{remark}
\begin{remark}
  The assumption that $\Gamma_0>0$ allows us to avoid analyzing what
  happens if $\supp\phi(\hH_{\ff}(\tau)+\tau^2)$ contains both
  continuous and point spectrum. It is easy to see that the
  approximate virial theorem,~\eqref{eqn166.334}, extends to the case
  $\Gamma_0=0.$ In particular, the estimate~\eqref{eqn165.334} does
  not require $\Gamma_0>0.$ Using the following lemma, we can extend
  our proof of this estimate to the case that $\Gamma_0=0.$ We leave the details of
  this extension to the reader; a proof of the Mourre estimate in much
  greater generality is provided in
  Appendix~\ref{MourreEstGenCase}.
  \begin{lemma}\label{lem1}
    Let $q$ be a  first order operator whose coefficients are
    supported in $B_R(0)$. There is a constant $C>0,$ so that if
    $v\in L^2(\bbR^2)$ is a solution to
    \begin{equation}\label{eqn168.334}
      (-\Delta+q)v=-\lambda v,
    \end{equation}
    for a $\lambda>0,$ then
    \begin{equation}
      \|r\pa_r v\|^2_{L^2(B_1^c(0))}\leq 4\|v\|^2_{L^2}+C\|\nabla v\|^2_{L^2(B_R(0))}.
    \end{equation}
    The constant, $C,$ is independent of $\lambda.$
  \end{lemma}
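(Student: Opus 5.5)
Outside $B_R(0)$ the equation~\eqref{eqn168.334} reduces to $\Delta v=\lambda v$, i.e.\ $v$ is a decaying solution of the modified Helmholtz equation. The plan is to use a weighted integration by parts (a Rellich/Pohozaev-type identity) in the exterior region, absorbing boundary contributions at $|x|=R$ by local elliptic estimates. Concretely, I would take a cutoff $\chi\in\cC^\infty(\bbR^2)$ with $\chi=0$ on $B_R(0)$ and $\chi=1$ outside $B_{2R}(0)$, and set $w=\chi v$. Then
\begin{equation*}
(-\Delta+\lambda)w=f,
\end{equation*}
where $f=-2\nabla\chi\cdot\nabla v-(\Delta\chi)v$ is supported in the annulus $R\le|x|\le 2R$. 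Since $v\in L^2$ and $\lambda>0$, elliptic regularity and the exponential decay of $v$ justify every integration by parts below.

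The key step is to pair the equation with $r^2\bar w$ and with the dilation-type multiplier $r\pa_r\bar w$, taking real parts. Pairing with $r^2\bar w$ gives
\begin{equation*}
\int r^2|\nabla w|^2+\lambda\int r^2|w|^2=\Re\int f\,r^2\bar w-2\Re\int r\bar w\,\pa_r w .
\end{equation*}
Here $2\Re\int r\bar w\pa_r w=\int r\pa_r|w|^2=-2\int|w|^2$ in two dimensions. The right-hand side is therefore $2\|w\|^2$ plus a term supported in the annulus, so
\begin{equation*}
\int r^2|\pa_r w|^2\le\int r^2|\nabla w|^2\le 2\|w\|^2+\Big|\int f r^2\bar w\Big| .
\end{equation*}
Notably, the favorable sign of $\lambda\int r^2|w|^2$ is simply discarded, which is why the constant does not depend on $\lambda$. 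If a sharper constant were needed, the Rellich multiplier $r\pa_r\bar w$ could supplement this, but the factor $4$ allowed in the statement leaves room, so this first identity should already suffice.

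Next I would restrict to $B_1^c(0)$. On $B_1^c\setminus B_{2R}$ the function $v$ differs from $w$, but there $r\le 2R$, so $\|r\pa_r v\|_{L^2(B_{2R}\setminus B_1)}\le 2R\|\nabla v\|_{L^2(B_{2R})}$. On $B_{2R}^c$ we have $v=w$. The annulus term is bounded by $\int|f|r^2|w|\le C_R(\|\nabla v\|_{L^2(B_{2R})}^2+\|v\|^2_{L^2(B_{2R})})$, with $C_R$ independent of $\lambda$. Combining, $\|r\pa_r v\|^2_{L^2(B_1^c)}\le 2\|v\|^2+C(\|\nabla v\|^2_{L^2(B_{2R})}+\|v\|^2)$.

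The main obstacle is matching the precise form of the claimed bound. The right-hand side involves only $4\|v\|^2$ and $\|\nabla v\|_{L^2(B_R)}$, not $B_{2R}$, and the coefficient in front of $\|v\|^2$ must not be inflated by $C$. To achieve this, I would first choose the cutoff transition inside a slightly larger ball and rename $R$, since $R$ is arbitrary given $\supp q\subset B_R$. I would then use Cauchy--Schwarz with a small parameter, $|\int f r^2\bar w|\le\|v\|^2+C'\|\nabla v\|^2$, so that the $\|v\|^2$ coefficient totals at most $4$. Finally, I would control $\|\nabla v\|$ on the annulus by $\|\nabla v\|_{L^2(B_R)}$ plus $\|v\|$ using the interior Caccioppoli estimate for $\Delta v=\lambda v$, which is $\lambda$-uniform because the term $\lambda|v|^2$ has a good sign. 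I expect this bookkeeping of constants, rather than the identity itself, to be the delicate part.
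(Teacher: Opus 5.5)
Your core argument is the paper's. You pair the equation with $r^2\bar v$ times a radial cutoff, discard $\lambda\int\chi r^2|v|^2\ge 0$ (which is exactly why $C$ does not depend on $\lambda$), and treat what the cutoff produces as local errors.

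There are two differences worth recording. First, the paper puts the cutoff transition in $\{1/2<r<1\}$ and keeps $\int\chi r^3\bar v\,qv\,dr\,d\theta$ as a compactly supported error, so all errors sit in a fixed ball containing $\supp q$ and no Caccioppoli step is needed. You put the transition outside $\supp q$. That removes $q$, but it forces you to estimate $\nabla v$ on an annulus outside $B_R$. The interior Caccioppoli estimate for the exterior equation $\Delta v=\lambda v$ does not reach $\partial B_R$, so you must use the full equation, $q$ included, on $B_{3R}$. Second, the paper bounds the cross term $-2\int\chi r^2\bar v\,\pa_r v\,dr\,d\theta$ by Cauchy--Schwarz and AM--GM, which is where its $4$ comes from. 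Your exact evaluation $\int x\cdot\nabla|w|^2\,dx=-2\|w\|^2$ gives the better constant $2$.

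The last paragraph of your proposal does not work as written. A small parameter in Cauchy--Schwarz only helps with cross terms $|v|\,|\nabla v|$, but your cutoff error is purely zeroth order. With $\bar w=\chi\bar v$, one more integration by parts gives
\[
\operatorname{Re}\int f\,r^2\bar w\,dx=\int\big(x\cdot\nabla(\chi^2)+r^2|\nabla\chi|^2\big)|v|^2\,dx .
\]
The coefficient here is dilation invariant. For a radial cutoff rising over $[R,2R]$ it is $\ge 1$ somewhere, so it cannot be made small.

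The Caccioppoli step has the same problem. It produces $\int|\nabla\eta|^2|v|^2$ plus the $q$-terms, and after multiplying by the weight $r^2\le 4R^2$ you again get a non-small multiple of $\|v\|^2_{L^2(B_{3R})}$. ``Renaming $R$'' replaces $\|\nabla v\|_{L^2(B_R)}$ by a norm over a larger ball, which is a weaker statement. So you end up with $2+C_0$ in front of $\|v\|^2$, where $C_0$ is not small, and nothing in your argument brings this down to $4$.

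This is no worse than the paper, though. Its sketch has the same looseness: the zeroth-order part of $q$ in $\int\chi r^3\bar v\,qv$ contributes $C\|v\|^2_{L^2(B_R)}$, which cannot be absorbed into $C\|\nabla v\|^2_{L^2(B_R)}$. What either argument actually proves is $\|r\pa_r v\|^2_{L^2(B_1^c)}\le 2\|v\|^2+C\|v\|^2_{H^1(B_{2R})}$ (respectively $4\|v\|^2$ plus a local $H^1$ norm). With the full-equation Caccioppoli estimate this becomes $\le C\|v\|^2_{L^2}$, with $C$ depending only on $R$ and bounds for the coefficients of $q$. That $\lambda$-uniform bound is all that is used for \eqref{eqn165.334}.
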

  \begin{remark}
    These estimates hold very generally for eigenfunctions of $N$-body
    Schr\"odinger operators, though the proof in the general case is
    considerably more technical.
  \end{remark}
  \begin{proof}
  As $q$ is compactly
 supported, the fact that $\lambda>0$ implies that $v$ and all of its
 derivatives are exponentially decaying. Let
 $\chi\in\cC^{\infty}(\bbR)$ be a monotone increasing function that is
 zero for $r<1/2,$ and $1$ for $r\geq 1.$ Integrating by parts we see
 that
 \begin{equation}
  \int_0^{2\pi} \int_{0}^{\infty}|\chi^{\frac
    12}(r)r\pa_rv(r,\theta)|^2rdrd\theta=
  -\int_0^{2\pi} \int_{0}^{\infty}\bar{v}\pa_r[\chi(r)r^3\pa_rv(r,\theta)]drd\theta.
 \end{equation}
 Expanding the derivative, using the equation~\eqref{eqn168.334} and integrating by parts in
 $\theta$ we see that
 \begin{multline}
  \int_0^{2\pi} \int_{0}^{\infty}|\chi^{\frac
    12}(r)r\pa_rv(r,\theta)|^2rdrd\theta=-\int_{0}^{2\pi}\int_0^{\infty}
  \chi(r)[|v_{\theta}|^2+\lambda r^3|v|^2]drd\theta\\
  -\int_{0}^{2\pi}\int_0^{\infty}[2\chi(r)r^2\bar{v}\pa_rv+\chi_r
    r^3\bar{v}\pa_rv
    +\chi(r)r^3\bar{v}q v]drd\theta.
 \end{multline}
 The first integral on the right is non-positive, and the last two
 terms in the next integral are compactly supported. Applying the
 Cauchy-Schwarz and arithmetic-geometric mean inequality to the
 remaining term gives the conclusion of the lemma.
  \end{proof}
  
\end{remark}

\section{Proof of the Generalized Mourre Estimate}\label{MourreEstGenCase}
In this appendix we prove the Mourre estimate in the setting of
general $N$-body problems, \eqref{eq:Mourre-0}, relying on a now
standard iterative argument for the indicial operators that originated
in this form in \cite{FroeseHerbst2}; see also
\cite{Derezinski-Gerard:Scattering}. We now allow the indicial operators
of the perturbation of the Laplacian to have terms, linear in the
frequency variables, which satisfy an overall smallness condition. The
argument and the notation in this section follows that
in~\cite[Appendix]{VasyExpDecay}.

To prove \eqref{eq:Mourre-0}, one only needs to show that for all
$b\in\cA$, $b\neq 1$, the corresponding indicial operators satisfy the
corresponding inequality, i.e.\ that
\begin{equation}\label{eq:Mourre-1a}
\phi(\hat H_b)i\widehat{[A,H]}_b\phi(\hat H_b)
\geq {\Bl (M'_1(\lambda)-\delta'_1(\lambda)-\epv)} \Bk \phi(\hat H_b)^2,
\end{equation}
without a compact error term. This means that the operators on the
two sides, which are families of operators on $X^b$, depending on
$(y_b,\xi_b)\in\sct_{C_b}\Xb$, satisfy the inequality for all
$(y_b,\xi_b)\in\sct_{C_b}\Xb$.  We actually prove that
\begin{equation}\label{eq:Mourre-1}
\phi(\hat H_b)i\widehat{[A,H]}_b\phi(\hat H_b)
\geq {\Bl (M_b(\lambda) -\delta_b(\lambda)-\epv)} \Bk \phi(\hat H_b)^2;
\end{equation}
since {\Bl $M_b(\lambda)-\delta_b(\lambda)\geq
  M_1'(\lambda)-\delta'_1(\lambda)$} \Bk for all $b\neq 1$, the
desired conclusion follows.  It is convenient to assume that $\phi$ is
identically $1$ near $\lambda$; if \eqref{eq:Mourre-1} holds for such
$\phi$, it holds for any $\phi_0$ with slightly smaller support, as
follows by multiplication by $\phi_0(\hat H_b)$ from the left and
right.

Note that for $b=0$ the estimate \eqref{eq:Mourre-1}
certainly holds: it comes from the Poisson bracket formula
in the scattering calculus, or from a direct computation yielding
$i\widehat{[A,H]}_0=2\Delta_{g_0}$. Hence, if the localizing factor
$\phi(\hat H_0)=\phi(|\xi|^2)$ is supported in
$(\lambda-\delta,\lambda+\delta)$ and $\lambda>0$,
then \eqref{eq:Mourre-1} holds even
with {\Bl $M(\lambda)-\epv$ replaced with $2(\lambda-\delta)$. } \Bk Note that
$\lambda= M_0(\lambda)$, if $\lambda\geq 0$, since $0$ is a threshold of $H$.
On the other hand, if $\lambda<0$, both sides of \eqref{eq:Mourre-1}
vanish for $\phi$ supported near $\lambda$, so the inequality holds
trivially.

In general, we may assume inductively\footnote{In \cite{VasyExpDecay},
  the inclusions were accidentally reversed in the appendix in the
  corresponding paragraph.} that at all clusters $c$ with
$C_c\supsetneq C_b$, i.e.\ $X^b\supsetneq X^c$, \eqref{eq:Mourre-1} has
been proved with $\phi$ replaced by a cutoff $\phit$ and $\epv$ replaced
by $\epv'$, i.e.\ we may assume that for all $\epv'>0$ there exists $\delta'>0$
such that for all $c$ with $C_c\supsetneq C_b$, and for all
$\phit\in\Cinf_c(\RR;[0,1])$ supported in $(\lambda-\delta',\lambda+\delta')$,
\begin{equation}\label{eq:Mourre-1-0}
\phit(\hat H_c)i\widehat{[A,H]}_c\phit(\hat H_c)
\geq {\Bl (M_c(\lambda)-\delta_c(\lambda)-\epv')} \Bk \phit(\hat H_c)^2.
\end{equation}
The operators appearing in~\eqref{eq:Mourre-1-0} are  the
indicial operators of $\phit(\hat H_b)i\widehat{[A,H]}_b\phit(\hat
H_b)$, so, as discussed in
\cite[Proposition~8.2]{Vasy:Propagation-Many}, in view of {\Bl
  $M'_{b}(\lambda)\leq M_c(\lambda)$ and $\delta_b'(\lambda)\geq
  \delta_c(\lambda)$} \Bk for all $c$ as above, \eqref{eq:Mourre-1-0}
implies that
\begin{equation}\label{eq:Mourre-8}
\phit(\hat H_b)i\widehat{[A,H]}_b\phit(\hat H_b)
\geq {\Bl (M'_b(\lambda)-\delta'_b(\lambda)-\epv')} \Bk\phit(\hat H_b)^2+K_b,
\ K_b\in\PsiSc^{-\infty,1}(X^b,\cC^b).
\end{equation}
Recall that this implication relies on a square root construction in
the many-body calculus, which is particularly simple in this case. To
complete the induction step we need to show that, possibly reducing
$\delta,$ this estimate holds without the compact error term.

To that end we first multiply \eqref{eq:Mourre-8}
through by $\phi(H_b)$ from both the left and the
right. We use coordinates $(w_b,w^b)$ on $X_b\oplus X^b,$ with
$(\xi_b,\xi^b)$  the dual coordinates.
Note that $\hat H_b=\hat H_b(y_b,\xi_b)=|\xi_b|^2+H^b$, so
if $\lambda-|\xi_b|^2$ is not an eigenvalue of $H^b$,
then as $\supp\phi\to\{\lambda\}$,
$\phi(H^b+|\xi_b|^2)\to 0$ strongly. Since $K_b$ is compact,
$\phi(H^b+|\xi_b|^2)K_b\to 0$ in norm; in particular it
can be made to have norm smaller than $\epv'-\epv>0$. After multiplication
from both sides by $\phi_1(\hat H_b)$, with $\phi_1$ having even smaller
support, \eqref{eq:Mourre-1}
follows with
$$
{\Bl M'_b(\lambda)\geq M_b(\lambda),\ \delta'_b(\lambda)\leq\delta_b(\lambda)} 
$$
\Bk 
in place of
{\Bl $M_b(\lambda), \delta_b(\lambda)$} \Bk and with $\phi_1$ in place of $\phi$,
with the size of $\supp\phi_1$ a priori depending on $(y_b,\xi_b)$.
The operators
$$
i\phi_1(\hat H_b)\widehat{[A,H]}_b\phi_1(\hat H_b)
$$
are continuous functions of $(y_b,\xi_b),$ with values in bounded operators on $L^2(X^b)$,
so if \eqref{eq:Mourre-1} holds at one value
of $\xi_b$, then it holds nearby {\Bl if $\epv>0$ is increased
  slightly: a priori one obtains a new term $-\epv'' \Id$ in the lower
  bound, but multiplication by a spectral cutoff restores the desired
  form.} \Bk Moreover, for large $|\xi_b|$ both sides
vanish as $\hat H_b=H^b+|\xi_b|^2$, with $H^b$ bounded below, so by a
covering argument
the estimate is in fact uniform
if we slightly increase $\epv>0$. {\em This proves \eqref{eq:Mourre-1} if
  $\lambda-|\xi_b|^2$ is not an eigenvalue of $H^b$.}

In general, if $E=E(y_b,\xi_b)$
denotes the projection on the $L^2$ eigenspace of $H^b$ at $\lambda'=\lambda
-|\xi_b|^2$ (so $\lambda'\leq\lambda$!), the argument just sketched {\em works if we can replace}
$\phi(\hat H_b)$ by $(\Id-E)\phi(\hat H_b)$, in particular, it suffices to
show that
\begin{equation}\label{eq:Mourre-8p}
i\phit(\hat H_b)\widehat{[A,H]}_b\phit(\hat H_b)
\geq {\Bl (M_b(\lambda)-\delta_b(\lambda)-4\epv')} \Bk \phit(\hat H_b)^2+(\Id-E)K'_b(\Id-E),
\end{equation}
$K'_b$ compact on $L^2(X^b)$, for then one can repeat the above
argument to conclude \eqref{eq:Mourre-1}.

To show \eqref{eq:Mourre-8p}, we follow an argument due to B.\ Simon
(as explained
in the paper \cite{FroeseHerbst2} of Froese and Herbst). The key point is
to replace $E$ by a finite rank orthogonal projection $F$, which
will later ensure that an error term is finite rank, hence compact. Thus,
by the compactness of $K_b$ (from \eqref{eq:Mourre-8}),
there is a
finite rank orthogonal projection $F$ with $\Range F\subset\Range E$
(so $F$ commutes with $\hat H_b$)
such that
\begin{equation}\label{eq:F-proj-def}
\|(\Id-E)K_b(\Id-E)-(\Id-F)K_b(\Id-F)\|<\epv'.
\end{equation}
{\Bl If $E$ is finite rank, we take $F=E$; in this case the 
  left hand side of \eqref{eq:F-proj-def} vanishes.} \Bk 
{\Bl Furthermore, if for all $|\xi_b|^2=\lambda-\lambda'$
  $$
  T_1=E|\xi_b|\Big[\frac{\xi_b}{|\xi_b|}\pa_{\xi_b},H^b\Big]E
  $$
  is compact, then we also assume (by possibly updating $F$) that
  \begin{equation}\label{eq:F-proj-def-2}
\|(E-F)T_1\|,\|T_1(E-F)\|< \epv'.
\end{equation}
} \Bk 
Multiplying \eqref{eq:Mourre-8} through by $(\Id-F)\phi(\hat H_b)
=\phi(\hat H_b)-F$ from
left and right and using \eqref{eq:F-proj-def} gives
\begin{equation}\begin{split}\label{eq:Mourre-16}
&i(\phi(\hat H_b)-F)\widehat{[A,H]}_b(\phi(\hat H_b)-F)\\
&\qquad\geq
{\Bl (M'_b(\lambda)-\delta'_b(\lambda)-2\epv')} \Bk (\phi(\hat H_b)^2-F)\\
&\qquad\qquad+\phi(\hat H_b)(\Id-E)K_b
(\Id-E)\phi(\hat H_b).
\end{split}\end{equation}
Moving the terms involving $F$ from the left hand side to the right hand
side yields that
\begin{equation}\begin{split}\label{eq:Mourre-16p}
i\phi(\hat H_b)\widehat{[A,H]}_b\phi(\hat H_b)
&\geq {\Bl  (M'_b(\lambda)-\delta'_b(\lambda)-2\epv')} \Bk (\phi(\hat H_b)^2-F)\\
&\qquad+F\phi(\hat H_b)i\widehat{[A,H]}_b(\phi(\hat H_b)-F)\\
&\qquad +(\phi(\hat H_b)-F)i\widehat{[A,H]}_b\phi(\hat H_b)F
+FTF\\
&\qquad+\phi(\hat H_b)(\Id-E)K_b
(\Id-E)\phi(\hat H_b).
\end{split}\end{equation}
where $T=Ei\widehat{[A,H]}_bE$.
But
\begin{equation*}\begin{split}
&F\phi(\hat H_b)i\widehat{[A,H]}_b(\phi(\hat H_b)-F)=FC+
F\phi(\hat H_b)i\widehat{[A,H]}_b(E-F),\\
&C=F\phi(\hat H_b)i\widehat{[A,H]}_b\phi(\hat H_b)(\Id-E),
\end{split}\end{equation*}
and for any $\epv_1>0$,
\begin{equation*}
FC+C^*F\geq-\epv_1 F-\epv_1^{-1} C^*C,
\end{equation*}
so \eqref{eq:Mourre-16p} yields that
\begin{equation}\begin{split}\label{eq:Mourre-32}
i\phi(\hat H_b)\widehat{[A,H]}_b\phi(\hat H_b)
\geq &{\Bl (M'_b(\lambda) -\delta'_b(\lambda)-2\epv')\phi(\hat H_b)^2} \Bk \\
&-{\Bl (M'_b(\lambda) -\delta'_b(\lambda)-2\epv'+\epv_1)F} \Bk \\
&+FT(E-F)+(E-F)TF+FTF\\
&
+\phi(\hat H_b)(\Id-E)(K_b+K''_b)
(\Id-E)\phi(\hat H_b),
\end{split}\end{equation}
with $K''_b=-\epv_1^{-1}
\widehat{[A,H]}_b F\phi(\hat H_b)^2 \widehat{[A,H]}_b$,
which is compact due to the appearance of $F$.

We next show that
\begin{equation}\label{eq:Mourre-23}
-{\Bl (M_b(\lambda)-2\epv'+\epv_1)F} \Bk 
+FT(E-F)+(E-F)TF+FTF \geq {\Bl -2\epv' E-\delta^b(\lambda)E}, \Bk 
\end{equation}
where $\lambda'=\lambda-|\xi_b|^2$ is the eigenvalue of $H^b$ to which
$E$ projects. {\Bl We remark that it is at this point that the Mourre
constant $M'_b(\lambda)$ is replaced by the (in general) smaller
$M_b(\lambda)$.} \Bk 

{\Bl Now\footnote{Note the missing $-$ signs in the second expression
    for $A$
    in the proof given in the reference \cite{VasyExpDecay}.}, $A=sD_s+A'=-w_b\cdot D_{w_b}-w^b\cdot D_{w^b}+A'$,
$A'\in \DiffSc^1(\Xb,\cC)$ (thus has one order of decay relative
  to $A$),} \Bk hence
\begin{equation*}\begin{split}
T=Ei\widehat{[A,H]}_bE=&{\Bl -} \Bk Ei\widehat{[w_bD_{w_b},H]}_bE{\Bl
  -} \Bk 
Ei\widehat{[w^bD_{w^b},H]}_bE\\
&+E\widehat{A'}_b(\hat H_b-\lambda)E
-E(\hat H_b-\lambda)\widehat{A'}_b E={\Bl -} \Bk Ei\widehat{[w_b D_{w_b},H]}_bE
\end{split}\end{equation*}
since by the virial theorem $iE[w^bD_{w^b},H^b]E=0$ (see the remark
below).
{\Bl Thus we see  that
$$
T=-Ei\widehat{[w_b
  D_{w_b},H]}_bE=E|\xi_b|\Big[\frac{\xi_b}{|\xi_b|}\pa_{\xi_b},\hat
H_b\Big]E\geq M^b(\lambda) E
$$
by the definition of $M^b(\lambda)$, presented
in Definition~\ref{IdMourreConst}.

In the finite rank case, when $E=F$,
$$
FT(E-F)+(E-F)TF+FTF=FTF
$$
and $FTF\geq M^b(\lambda) F$ proves \eqref{eq:Mourre-23} if we
choose $\epv_1<2\epv'$, even with $0$ on the right hand side, since
$M_b(\lambda)\leq M^b(\lambda)$.

In the infinite rank case, if for all $|\xi_b|^2=\lambda-\lambda'$
  $$
  T_1=E|\xi_b|\Big[\frac{\xi_b}{|\xi_b|}\pa_{\xi_b},H^b\Big]E
  $$
  is compact,
  $\|FT(E-F)+(E-F)TF\|<2\epv'$, and
  $$
  FT(E-F)+(E-F)TF=E(FT(E-F)+(E-F)TF)E,
  $$
  so $FT(E-F)+(E-F)TF\geq
  -2\epv' E$. Hence we can finish the argument for \eqref{eq:Mourre-23}  as in the finite rank
  case, with now $-2\epv' E$ needed on the right hand side, but with $\delta^b=0$.} \Bk

{\Bl In the general infinite rank case} \Bk we have
$$
T=T_0+T_1={\Bl -} \Bk Ei\widehat{[w_b D_{w_b},\Delta_{X_b}]}_bE {\Bl -} \Bk Ei\widehat{[w_b
  D_{w_b},H^b]}_bE
$$
where
\begin{equation*}
T_0={\Bl -} \Bk iE\widehat{[w_b D_{w_b},\Delta_{X_b}]}E=2|\xi_b|^2 E=2(\lambda-\lambda')E;
\end{equation*}
here $\lambda'=\lambda-|\xi_b|^2$ is the eigenvalue of $H^b$ to which
$E$ projects.
{\Bl By the definition of $M^b(\lambda)$ we also have
$$
ETE=2|\xi_b|^2
E+|\xi_b|E\Big[\frac{\xi_b}{|\xi_b|}\pa_{\xi_b},H^b\Big]E\geq
M^b(\lambda) E,
$$
and
$$
\Big\||\xi_b|E\Big[\frac{\xi_b}{|\xi_b|}\pa_{\xi_b},H^b\Big]E\Big\|\leq\delta^b(\lambda).
$$
With
$$
T_1=iE[\xi_bD_{\xi_b},\hat H^b]E=E[\xi_b\pa_{\xi_b}, \hat H^b]E,
  $$
  we thus have 
$$
\|T_1\|\leq \delta^b(\lambda).
$$
Thus,
\begin{equation*}\begin{aligned}
    &FT_1(E-F)+(E-F)T_1F\geq -
  \delta^b(\lambda) E
\end{aligned}\end{equation*}
since
\begin{equation*}\begin{aligned}
|\langle FT_1(E-F)\psi,\psi\rangle|&=|\langle
T_1(E-F)\psi,F\psi\rangle|\leq\|T_1\|\|(E-F)\psi\|\|F\psi\|\\
&\leq\|T_1\|\frac{1}{2}(\|(E-F)\psi\|^2+\|F\psi\|^2).
\end{aligned}\end{equation*}
Hence
\begin{equation*}\begin{aligned}
&\langle FT_1(E-F)\psi,\psi\rangle+\langle (E-F)T_1
F\psi,\psi\rangle\\
&\qquad\geq -\|T_1\|(\langle (E-F)\psi,\psi\rangle+\langle
F\psi,\psi\rangle)
=-\|T_1\|\langle E\psi,\psi\rangle.
\end{aligned}\end{equation*}
} \Bk 
We also have $FT_0(E-F)=0=(E-F)T_0F$ since $T_0$ is a scalar multiple
of $E$.
As $M_b(\lambda)\leq M^b(\lambda)$, if we choose $\epv_1<2\epv'$, then it follows that
\begin{equation*}\begin{aligned}
-{\Bl (M_b(\lambda)}&{\Bl -2\epv'+\epv_1)F} \Bk 
+FT(E-F)+(E-F)TF+FTF\\
&\geq {\Bl (-M_b(\lambda)+2\epv'-\epv_1)F +M^b(\lambda)F-
  \delta^b(\lambda) E\geq -\delta^b(\lambda) E};
 \Bk 
\end{aligned}\end{equation*}
 proving \eqref{eq:Mourre-23}.

Thus, by \eqref{eq:Mourre-32},
\begin{equation}\begin{split}\label{eq:Mourre-32p}
i\phi(\hat H_b)\widehat{[A,H]}_b\phi(\hat H_b)
&\geq {\Bl (M'_b(\lambda)-\delta'_b(\lambda)-2\epv') } \Bk \phi(\hat
H_b)^2\\
&\qquad{\Bl +(M_b(\lambda)-M'_b(\lambda)+\delta'_b(\lambda))F-2\epv' E-\delta^b(\lambda)E} \Bk \\
&\qquad+\phi(\hat H_b)(\Id-E)(K_b+K''_b)
(\Id-E)\phi(\hat H_b)\\
&{\Bl \geq (M'_b(\lambda)-\delta'_b(\lambda)-4\epv'-
  \delta^b(\lambda)) (\phi(\hat
  H_b)^2-F)} \Bk \\
&\qquad{\Bl +(M_b(\lambda)-4\epv'-
  \delta^b(\lambda))F}\\
&\qquad+\phi(\hat H_b)(\Id-E)(K_b+K''_b)
(\Id-E)\phi(\hat H_b),
\end{split}\end{equation}
which proves \eqref{eq:Mourre-8p} {\Bl since $\phi(\hat
  H_b)^2=\phi(\hat H_b)^2-F+F,$  $M'_b(\lambda)\geq M_b(\lambda)$
  and $\delta_b(\lambda)=\delta'_b(\lambda)+\delta^b(\lambda)\geq
  \delta^b(\lambda)$.} \Bk Hence the proof of \eqref{eq:Mourre-0} is
complete.

\begin{remark}
The statement $iE[w^bD_{w^b},H^b]E=0$ is formally a consequence of
\begin{equation*}
iE[w^bD_{w^b},H^b]E=iEw^b D_{w^b}(H^b-\lambda-|\xi_b|^2)E
-iE(H^b-\lambda-|\xi_b|^2)w^b D_{w^b}E=0,
\end{equation*}
but this requires justification since $w^b D_{w^b}$ is not bounded
on $L^2(X^b)$. In fact, by elliptic regularity
(namely using $E=E\phi(\hat H_b)$), the only issue is
the lack of decay of $w^b D_{w^b}$ at infinity, but the computation is
justified by replacing $w^b D_{w^b}$ by $\chi_0(|w^b|/C) w^bD_{w^b}$,
$\chi_0\in\Cinf_c(\RR)$ identically $1$ near $0$, and observing that
\begin{equation*}
\phi(\hat H_b)[\chi_0(w^b/C),H]w^bD_{w^b}
\end{equation*}
is uniformly bounded and
goes to $0$ strongly as $C\to\infty$.
\end{remark}

\section{Propagation estimates}\label{prop_est}

{\Bl Propagation estimates in  $N$-body scattering describe the
  propagation of the $N$-body wave front set under generalized broken
  bi-characteristic flow.  The trajectories, called generalized broken
  bi-characteristics(GBBs), are the standard bi-characteristics in the
  free region, away from the proper collision planes $C_a$, which
  project to geodesics on $C_0=\pa X.$ If
  they are normally incident at $C_a$, then they are broken
  bi-characteristics, much as in the reflection of singularities of
  solutions of the wave equation from boundary hypersurfaces or edges,
  which project to broken geodesics. That is,
at their locally unique point in $C_a$ the tangential momentum to
$X_a^*$ is preserved, but the normal component, in $(X^a)^*$, can jump,
subject to the condition that it remains in the characteristic
set.

The definition becomes more complicated for tangentially incident
bi-character-istics, but for $C_a$ which consists of distinct points, as
is the case of wave guides, all the tangential points are in fact
radial. There are no tangential variables along the $C_a$, and the
ambient space $X_a$ only adds the radial direction. Hence, the GBBs in
this case are simply broken bi-characteristics.

With this background, the main propagation estimates of
\cite{VasyAsterisque} in the 3-body case, \cite{Vasy:Propagation-Many}
in the $N$-body case without bound states in the subsystems and
\cite{Vasy:Bound-States} for the general $N$-body case state that the
$N$-body wave front set of $u$, both in the absolute or finite
weighted Sobolev regularity version, is a union of maximally extended
(complete) GBBs. In the special case of the 3-body setting, including
wave guides, the GBBs which are not simply radial points converge (in
the forward, resp.\ backward direction) to either $R^-_{k_0^2,\mf}$ or
$\cup_jR^-_{k^2_0,\ff_j}$, resp.\ $R^+_{k^2_0,\mf}$ or
$\cup_jR^+_{k^2_0,\ff_j}$.

As for the radial sets, much as in the 2-body case (decaying
potentials), there is a distinction between high and low regularity,
which in this context is a measure of the rate of decay. If for some
$\delta>0$ the Sobolev wave front set of $u$, with $-1/2+\delta$
regularity (in the decay sense) is disjoint from say $R^+_{k^2_0,\mf}$
and $\cup_jR^+_{k^2_0,\ff_j}$ then in fact the wave front set is completely
disjoint from $R^+_{k^2_0,\mf}$ and $\cup_jR^+_{k^2_0,\ff_j}$, i.e.\ $u$ is
microlocally Schwartz. This is the content of
\cite[Lemma~17.2]{VasyAsterisque}. It is therefore disjoint from a
neighborhood of these radial sets. We can then use the aforementioned
propagation result along GBBs.

On the other hand, if one does not make an a priori assumption on $u$
at $R^+_{k^2_0,\mf}$ and $\cup_jR^+_{k^2_0,\ff_j}$ beyond being a
tempered distribution and a solution of the $N$-body Helmholtz
equation, then one can propagate the absence of wave front set, of
decay order strictly less than $-1/2$, from a punctured neighborhood of
$R^+_{k^2_0,\mf}$ and $\cup_jR^+_{k^2_0,\ff_j}$ to
  these sets. Of course, $R^+$ may be replaced by
  $R^-$ throughout.

  In particular, in the 3-body setting, assuming that
  $R^+_{k^2_0,\mf}$ and $\cup_jR^+_{k^2_0,\ff_j}$ are disjoint from the
  $-1/2+\delta$, $\delta>0$, wave front set of $u$, one concludes that
  the wave front is contained in $R^-_{k^2_0,\mf}\cup_{j}
  R^-_{k^2_0,\ff_j}$. Furthermore, microlocally, at this locus, $u$ is
  in every weighted Sobolev space of decay order $<-1/2$; this is the
  content of \cite[Lemma~17.7]{VasyAsterisque}.  Further, if one
  assumes that $u$ is in some weighted Sobolev space of order $>-1/2$,
  e.g.\ in $L^2$, one concludes that in fact $\WF_{3\Sc}(u)$ is empty.

  In this section we indicate the changes needed for the proofs of the
  propagation estimates of
  \cite{VasyAsterisque} to accommodate our more general class of
  operators which do not possess a product structure,} \Bk namely
$$
\hat V_a(y_a,\xi_a)=\hat
V^0_a(y_a)+\hat V^1_a(y_a)\xi_a,
$$
with here $y=y_a$ simply, and in the actual wave guide setting (one
dimensional collision plane) there are no $y$ variables, and
$\xi=\tau$. {\Bl We thus have

\begin{theorem}\label{thm:non-product-prop-sing-WG}
Suppose that $H$ is a wave guide 3-body operator, so $C_a$ for $a\neq
0,1$ is discrete. Suppose also that $\lambda=k^2>0$ and that the ideal
Mourre constant is $M'_1(\lambda)>0$. Suppose also that $(H-\lambda)u$
is Schwartz. Then
\begin{enumerate}
\item
  Propagation of
singularities holds, i.e.\ $\WF_{3\Sc}(u)$ is a
union of maximally extended GBBs, and the same holds for all weighted
Sobolev wave front sets, $W^{m,l}_{3\Sc}(u).$
\item
  The high regularity radial estimates hold: if $R^+_{k^2,\mf}$ and
  $\cup_jR^+_{k^2,\ff_j}$ are disjoint from
  $\WF_{3\Sc}^{*,-1/2+\delta}(u)$ for some $\delta>0$, they are also disjoint from
  $\WF_{3\Sc}(u)$. The same holds with $R^+_{k^2,\mf}$ and
  $\cup_jR^+_{k^2,\ff_j}$ replaced by $R^-_{k^2,\mf}$ and
  $\cup_jR^-_{k^2,\ff_j}.$
\item
  The low regularity radial estimates hold: if a punctured
  neighborhood of $R^+_{k^2,\mf}$ and
  $\cup_jR^+_{k^2,\ff_j}$ is disjoint from
  $\WF_{3\Sc}^{*,l}(u)$ for some $l<-1/2$, then so is $R^+_{k^2,\mf}$ and
  $\cup_jR^+_{k^2,\ff_j}$. The same holds for $R^+_{k^2,\mf}$ and
  $\cup_jR^+_{k^2,\ff_j}$ replaced by $R^-_{k^2,\mf}$ and
  $\cup_jR^-_{k^2,\ff_j}$.
\end{enumerate}
  \end{theorem}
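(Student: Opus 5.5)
The plan is to follow the proofs of the propagation estimates in \cite[Sections~13--17]{VasyAsterisque} and isolate the places where the product structure \eqref{eq:H_a-ind-pt-pr} was used. Away from $\cup_j\ff_j$, i.e.\ over $\Int\mf$, the perturbation $V$ vanishes to infinite order. There the operator is the free Helmholtz operator modulo $\Psi_{\Sc}^{1,\infty}$, so the standard scattering positive-commutator arguments of \cite{Melrose94} give parts 1--3 over $\Int\mf$ unchanged. All work is therefore at the front faces. Because $C_a=\{\theta_j\}$ is discrete, the only microlocal variable there is $\tau$, and a commutant is built from functions of $\tau$ (and $\rho$), with the indicial operators $\hN_{\ff_j,0}(H)(\tau)=\hH_{\ff}(\tau)+\tau^2$ as operator-valued symbols.

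For part 1 (propagation along GBBs, including through $\ff_j$ at non-radial $\tau_0$ with $\tau_0^2<k^2$), I would reuse Vasy's commutant $A=\phi(\tau)\chi(\rho)\psi(H)$. Here $\phi$ is monotone in $\tau$, $\psi(H)\in\Psi^{-\infty,0}_{3\Sc}$ is a spectral localizer near $\lambda$, and the positivity comes from the indicial operator of $i[A,H]$. At $\ff_j$ this is $\phi'(\tau)\,\hat G(\tau)$ plus terms supported where $\chi'\neq 0$. Here
\[
\hat G(\tau)=\psi(\hH_{\ff}(\tau)+\tau^2)\,\bigl([\hH_{\ff}(\tau),Y\cdot\pa_Y]+\tau\pa_\tau\hN_{\ff,0}(H)(\tau)\bigr)\,\psi(\hH_{\ff}(\tau)+\tau^2),
\]
as in \eqref{eqn158.502}. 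In the product case the $\tau$-derivative term is just $2\tau^2$. In our case it contains $\tau\tQ_{10}$, whose contribution is handled exactly as in the proof of Theorem~\ref{Mourre+}. Theorem~\ref{Mourre+}, with the hypothesis $M_1'(\lambda)>0$ (note $\delta_1'=0$ in the 3-body case), supplies the indicial inequality $\hat G\geq c\,\psi^2$ modulo compact errors. Multiplying by the microlocal cutoff in $\tau$ removes those errors locally, and this positivity is the only input Vasy's argument needs from the Mourre estimate.

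The extra commutator $[\phi(\tau),\hat V^1\tau]$ is harmless. The term $\hat V^1\tau$ is a multiplication operator in $Y$ times $\tau$, so in the 3-scattering calculus its commutator with $\phi(\tau)$ gains one order of decay in $\rho$. The term $\pa_\tau$ hitting $\hat V^1\tau$ is precisely what was absorbed into $\hat G$. The usual regularization $\phi\mapsto\phi\,(1+\epsilon/\rho)^{-1}$ and the iteration in weight orders then go through verbatim.

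For parts 2 and 3 (radial points at $R^\pm_{k^2,\ff_j}$, i.e.\ at $\tau=\pm k$ and at the wave-guide frequencies $\pm\tau^1_{jl}$), I would use the commutant $\rho^{-2l-1}\phi(\tau)\psi(H)$ localized near the radial value. The sign of the main term is $(2l+1)$ times the $\rho$-weight derivative, multiplied by the symbol of $-\rho D_\rho$-type terms, i.e.\ by $\tau$ at the radial point. This gives a definite sign for $l>-1/2$ (high regularity) or $l<-1/2$ (low regularity), relative to the term $\phi'\hat G$. At a wave-guide frequency, one needs that the $\tau$-derivative term $\tau\pa_\tau$ of the indicial operator, compressed to the eigenspace $E$, has the same sign as $\tau^2$. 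This is exactly the group-velocity condition \eqref{eqn139.500}, encoded in $M^a(\lambda)>0$. Since $E$ is finite rank, Definition~\ref{def:microlocal-Mourre-loss} gives $\delta^a=0$, and the positivity holds modulo a finite-rank error that the microlocalization removes.

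The main obstacle I expect is this radial-point step at the wave-guide frequencies. The eigenprojector $E(\tau)$ varies with $\tau$, so one must check two things. First, that the virial identity $E[\hH_{\ff},Y\cdot\pa_Y]E=0$ survives with only $O(|\tau-\tau_0|)$ errors, using the approximate virial bound \eqref{eqn166.334}. Second, that the cross terms between $E$ and $\Id-E$ introduced by $\tQ_{10}$ are dominated. I would handle both by the finite-rank argument from Appendix~\ref{MourreEstGenCase}, shrinking the $\tau$-support of $\phi$ until these errors are smaller than $M^a(\lambda)/2$.
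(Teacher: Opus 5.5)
There is a genuine gap. Your architecture matches the paper's: you reuse \cite[Sections~13--17]{VasyAsterisque}, do normal propagation where $\tau^2<k^2$, do radial estimates at $\tau=\pm k$ and at the wave-guide frequencies, and take positivity from the group-velocity condition. But the commutator identity your Part~1 rests on is false, and as a result you attach the non-product term to the wrong part of the commutator.

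\textbf{Part 1.} You assert that the indicial operator of $i[\phi(\tau)\chi(\rho)\psi(H),H]$ at $\ff_j$ is $\phi'(\tau)\hat G(\tau)$, with $\hat G$ the Mourre commutator \eqref{eqn158.502}, including $\tau\pa_\tau\hN_{\ff,0}(H)(\tau)$. But $\tau$ is the symbol of $s\cdot sD_s$, so $i[\tau,H]=s\,i[sD_s,H]+i[s,H]\,sD_s$. By the $l=1$ case of the computation $s^{-l}[B,s^l]\mapsto -il\,\pa_\tau\hat B_{\ff}$ in Appendix~\ref{prop_est}, $s^{-2}\,i[s,H]$ has indicial operator $-\pa_\tau\hat H_{\ff}$. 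So the second term contributes $-\tau\pa_\tau\hat H_{\ff}$ to the indicial operator of $s^{-1}i[\tau,H]$, which cancels the $\tau\pa_\tau$ part of \eqref{eqn158.502}. The Hamilton-derivative term is therefore only $\phi'(\tau)\,\psi(\hat H_{\ff})[\hat H_{\ff}(\tau),Y\cdot\pa_Y]\psi(\hat H_{\ff})$. In free space this equals $2(\lambda-\tau^2)$ and vanishes at the radial points, as it must; your $\hat G$ would be $\approx 2\lambda$ there.

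Consequently Part~1 needs no Mourre input at all. For $\tau^2<\lambda$, $[\hat H_{\ff},Y\cdot\pa_Y]$ is $2\Delta_Y$ plus relatively compact terms, now including $\tau\,Y\cdot\nabla_Y\hat V^1$. Since $\lambda-\tau^2>0$ is not an eigenvalue, the shrinking-spectral-support argument gives positivity. The undifferentiated $\hat V^1$ enters only through commutators of $H$ with functions of position (the weight, the regularizer $\Lambda_r$, the cutoff in $\rho$). These terms must be dominated by making the $\tau$-derivative of the microlocalizer large; they do not go through verbatim. Your remark that ``$\pa_\tau$ hitting $\hat V^1\tau$'' is absorbed into $\hat G$ has this backwards.

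\textbf{Parts 2--3.} Here the non-product structure enters exactly through the weight, and this is the computation your proposal does not make: you say the weight term is multiplied by ``$\tau$ at the radial point''. Take $Q=s^{-l-1/2}Q_0$ with $Q_0$ quantizing $\chi(\tau)\psi_0(s)$. Then $-i[s^{-l-1/2},H]Q_0\psi(H)$ has indicial operator $-(l+1/2)(\pa_\tau\hat H_{\ff})\chi(\tau)\psi(\hat H_{\ff})$. This operator replaces the scalar $-(l+1/2)\,2\tau$ in \cite[Eq.~(17.13)]{VasyAsterisque}. The $\chi'$-term lives where $\lambda-\tau^2$ is not an eigenvalue and is handled as in normal propagation.

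What must be proved is a lower bound for $\psi(\hat H_{\ff})\,\sign(\tau)\pa_\tau\hat H_{\ff}\,\psi(\hat H_{\ff})$ on $\supp\chi$. This includes $\tau$ near $\pm\sqrt\lambda$, where eigenvalue branches may tend to $0$. The paper obtains it by rerunning the induction of Appendix~\ref{MourreEstGenCase} with $\sign(\tau)\pa_\tau\hat H_{\ff}$ in place of $i\widehat{[A,H]}_b$. The inputs are that $E\pa_\tau H^aE$ is compact, and that $\tilde M^a(\lambda)>0$ follows from $M'_1(\lambda)>0$ because $\tau$ is bounded above and below.

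So you found the right positivity condition (group velocity, finite rank so $\delta^a=0$), but you placed it in the $\phi'$-term, where it does not occur. The obstacle you anticipate, an approximate virial identity for $E(\tau)[\hat H_{\ff},Y\cdot\pa_Y]E(\tau)$, is an artifact of the incorrect $\hat G$ and does not arise.
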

}
 \Bk 

  {\Bl As we already noted, we discuss the changes  in the proof of this result relative to
  \cite{VasyAsterisque}; the discussion is from the perspective of the
  general $3$-body problem with this structure, without making an ideal
  Mourre constant assumption, but eventually
  specializing to the wave guide problem when this becomes necessary.}
\Bk 
First, all the elliptic arguments are unchanged, and we already
indicated the changes needed for the Mourre estimate in an even more
general, $N$-body type, setting. The paper \cite{VasyAsterisque}
proceeds with a general construction of potential commutants in
Section~13; the arguments there are unaffected until the proof of
Corollary~13.4. This corollary still holds as stated; the main point is that one needs to observe
that the contributions from\footnote{{\Bl Here, as before, $s$ is
  the boundary defining function denoted by $x$ in
  \cite{VasyAsterisque}.} \Bk } $[s^l,V]\tilde Q$ no longer vanishes, but
as $\tilde Q$ is outside the commutator, the contribution to
$\widehat{[s^l\tilde Q,H]}_{\ff,l+1}(\xi)$ depends on $q$ via
$q(0,0,z,\tau,\mu,\nu)$, so (13.37), estimating $\|R(\xi)\|_{L^2,\Hsc^{1,1}}$, holds
with even $\beta=0$. We also remark that
$V_{\ff}\in\Psisc^{0,1}$ still (since $\xi=\xi_a$ is a parameter), but as we have a spectral localizer
$\psi(H)$ factor, $\psi$ compactly supported, the differential order is
always $-\infty$ in any case, and thus never matters.

Turning to the actual propagation estimates, the potentially affected
part of the normal propagation of
singularities relies on this estimate of
$\|R(\xi)\|_{L^2,\Hsc^{1,1}}$, as well as an argument using that
$\lambda-\tau^2-|\nu|^2_z>0$ is not an eigenvalue of $H_{\ff}$, which
is still true. This allows a shrinking support of spectral cutoff
argument, concerning $\widehat{\tilde\psi(H)}_{\ff}$ around
(14.42). Note in particular that the contribution of $V$ to the
commutator with a regularized weight
$\Lambda_r=s^{-l-1/2}(1+r/s)^{-1}$ is of the same order as that with
$\Delta$, thus these can be treated as  error terms on an equal
basis: in general, for propagation results, making the Hamilton derivative of
the microlocalizer large relative to the microlocalizer itself assures
that the weight commutator terms can be dominated by the
microlocalizer contributions. See the comments above on $q$ being
undifferentiated in the contribution of the new term in the commutator
due to the weight in Corollary~13.4. Hence the normal propagation result, Proposition~14.1, holds
as stated.

The tangential propagation estimate, Proposition~15.2, is proved under
additional conditions in \cite{VasyAsterisque}, which either impose
the lack of bound states in the wave guide, or enforce an independence
of $V_{\ff}$ from $z$, which in the present context would amount to
independence from $(z,\tau,\nu)$, and are thus too strong. However,
Lemma~15.1 remains valid, together with its proof. The more
complicated structure of $V$ would have an effect on Proposition~15.2,
though only if $\lambda-\tau^2-|\nu|^2_z$ is an eigenvalue of
$H_{\ff}$ (the argument otherwise parallels that in the normal case).
The affected part starts around (15.51). This issue is
not due to the weights which can be controlled as in the normal case,
but due to contributions from derivatives in $(z,\tau,\nu)$ of $\hat
H_{\ff}$ in (13.38). Rather than discussing this case further, we
concentrate on our wave guide case, where there are no $(z,\nu)$
variables, thus any point in the tangential part of the characteristic
set $\Sigma_t(\lambda)$ is necessarily in the radial sets, $R^\pm_{k^2,\ff}$,
and the same applies to the negative energy bound states considered in
Section~16.

It remains to discuss Section~17 on the radial sets. Lemma~17.1 is
valid with the same proof, while the high regularity Lemma~17.2 is
affected from around (17.22) on, with a mirrored effect on the low
regularity Lemma~17.7. {\Bl We first indicate how, under smallness assumptions, as in
  \eqref{eq:hat-V-1-est}, one can simply modify the arguments, and later give
  a stronger argument that goes through under our weaker assumptions for the
  Mourre estimate.

  For the simple modification the key
  point being that the argument around
(17.22) already incorporates the weight $s^{-l-1/2}$, so for
completeness also the contribution $[s^{-l-1/2}, \hat
  V^1(y)(s^2D_s)]=\hat V^1(y) [s^{-l-1/2}, s^2D_s]$ needs to be
discussed. We recall first that} \Bk Lemma~17.2 around (17.22) refers to the discussion
around (15.51) (briefly mentioned above), and in our setting of
waveguides, with no $(z,\nu)$ variables, (15.52) still holds as in
(13.38), $\pa_\tau\hat H_{\ff}$ only enters when multiplied by a
$\nu$-derivative of the microlocalizer. Returning to $[s^{-l-1/2}, \hat
  V^1(y)(s^2D_s)]$, this commutator, when multiplied by $s^{l-1/2}$ (this is
the renormalization for the normal operator of that order) is bounded
by {\Bl $(l+1/2)\tilde\delta_{1,a}$ (the subscript $a$ of
$\tilde\delta_{1,a}$ indicates the collision plane, as in
\eqref{eq:hat-V-1-est}).} \Bk
This is then
multiplied by $f$, which is $\chi(\tau)$ in that setting. There is no
helpful term like the one with an arbitrary $M$ in (15.58), 
as we do not have a definite term involving $\chi',$ exactly due to the
radial point. As this $M$ in Section~15 (and 14) is only needed to
control the weights, on which no assumptions are made, (which here
have the correct sign, even with regularization added), this is not an
issue. The conclusion is the desired estimate provided
$2(l+1/2)(\lambda^{1/2}-2\ep)$ (the constant $c_1$ of (17.15) which is
$f^\flat$ at the radial set in $\supp\chi$ in (17.16), and $\ep>0$ is
arbitrary in the construction of $\chi$, and $\lambda$ is the spectral
parameter) is greater than the norm, on $\supp\chi$, of the additional
contribution from above, i.e.\ $(l+1/2)\tilde\delta_{1,a}$, {\Bl i.e.\
  if
  \begin{equation}\label{eq:radial-set-small}
  \tilde\delta_{1,a}<2\lambda^{1/2}.
  \end{equation}

  Note that if $H_a$ has eigenvalues (ignoring the infinite rank and
  compact part of the definition), with the notation of part (3) of
  Definition~\ref{def:microlocal-Mourre-loss}, taking
  $\delta^a(\lambda)$ defined by \eqref{eq:delta^a-def-small-V},
  $\delta^a(\lambda)\geq \sqrt{\lambda}\tilde\delta_{1,a}$, so in
  particular, if $\delta^a(\lambda)<2\lambda$ for all 2-body subsystems
  $a$, then \eqref{eq:radial-set-small} holds. Since
  $M'_1(\lambda)\leq M_0(\lambda)=2\lambda$, this means that if the
  Mourre estimate of Theorem~\ref{Mourre+} holds with a positive
  constant on the right hand side, with $\delta_1'(\lambda)$ defined
  via (3) of Definition~\ref{def:microlocal-Mourre-loss} and Equation
  \eqref{eq:delta^a-def-small-V}, then \eqref{eq:radial-set-small} is
  also satisfied. Thus, in this case the microlocal radiation
  condition
$$
\WF_{3\Sc}(u)\subset R^-_{k^2}\cup
R^-_{k^2}\restrictedto_{\ff_j}
$$
also holds. Moreover if $u$ is a priori in a
weighted Sobolev space with weight strictly larger than $-1/2$ then
$\WF_{3\Sc}(u)=\emptyset$.

Turning to the more significant modification, we need to go back to
the basic commutator computations of \cite{VasyAsterisque} because the
weights $s^l$ there were simple to deal with analytically since they
commuted with the interactions $V$: the lack of this in our more
general setting was already
addressed in the previous paragraph, but only in a perturbative
regime, and a more thorough treatment yields significant benefits. For
this, we need to go back to the basic indicial operator computations
in \cite[Sections~6, 7]{VasyAsterisque}. Concretely, as
\cite[Equation~(7.11)]{VasyAsterisque} indicates, the Schwartz kernel
of the indicial
operator of $A$ at the front face evaluated at parameters
$(z,\tau,\nu)$ with kernel variables\footnote{This identity holds at
  the front face.} $Y=\bar Y-\bar Y',\bar Y$, can be
computed, as
\begin{equation}\label{eq:ind-op-ff}
\hat A_{\ff}(Y,\bar Y;z,\tau,\nu)=\int e^{i(S(-\tau+z\nu)-Z\nu)}A(0,\bar Y,z,S,Y,Z)\,dS\,dZ,
\end{equation}
where $S,Y,Z$ are the scattering front face variables defined near the
diagonal.

In order to avoid confusion, we switch from our notation $s$ to $x,$
used in \cite{VasyAsterisque}, to denote the boundary defining
function.\footnote{In \cite{VasyAsterisque} $s$ denotes $x'/x$.} In
particular $S=\frac{x-x'}{x^2},$ where $x'$ is the pullback of the
boundary defining function from the input, and $x$ from the output
slot, see Equation~(3.7) of \cite{VasyAsterisque}, while $\bar Y=y/x$,
$\bar Y'=y'/x'$ with analogous notation for pullbacks. Thus,
$x'/x=1-xS$, and the Schwartz kernel of the conjugate $x^{-l}Ax^{l}$
is $(x'/x)^l=(1-xS)^{l}$ times the Schwartz kernel of $A$. Since
$x^{-l}[A,x^{l}]=x^{-l}Ax^l-A$, its Schwartz kernel is $(1-xS)^l-1$
times that of $A$, which is $-lxS$ times that of $A$ modulo operators
of two orders decay relative to $A$. Hence by \eqref{eq:ind-op-ff} its
indicial operator is an operator with one additional order of decay
(which factors out the $x$); it is
\begin{equation}\begin{aligned}\label{eq:weight-ind-op-ff}
    &\int e^{i(S(-\tau+z\nu)-Z\nu)}(-lS)A(0,\bar Y,z,S,Y,Z)\,dS\,dZ\\
    &=-il \pa_\tau\int e^{i(S(-\tau+z\nu)-Z\nu)}A(0,\bar Y,z,S,Y,Z)\,dS\,dZ=-il \pa_\tau\hat A_{\ff}(Y,\bar Y;z,\tau,\nu).
  \end{aligned}\end{equation}
In fact, for regularization purposes, with $r>0,$ a small 
regularization parameter, it is useful to point out that the indicial
operator of 
$$
\Big(\frac{x+r}{x}\Big)^l\Big[A,\Big(\frac{x}{x+r}\Big)^l\Big]=\Big(\frac{x+r}{x}\Big)^l A\Big(\frac{x}{x+r}\Big)^l-A
$$
can be similarly computed, uniformly as $r\to 0$, using
$$
\Big(\frac{x+r}{x}\frac{x'}{x'+r}\Big)^l-1=(1-xS)^l\Big(1-\frac{x}{x+r}xS\Big)^{-l}-1,
$$
which is
$$
-lxS\Big(1-\frac{x}{x+r}\Big)=-lxS\frac{r}{x+r}
$$
modulo $x^2$ terms with uniform bounds, to yield
$$
-i l\frac{r}{x+r}\pa_\tau\hat A_{\ff}(Y,\bar Y;z,\tau,\nu).
$$
That is: \ $0\leq \frac{r}{x+r}\leq 1$ times \eqref{eq:weight-ind-op-ff}  (corresponding to commuting
through a fixed weight $x^l$).

Now, in the proof of Lemma~17.2 of \cite{VasyAsterisque} one
introduces
$$
q=x^{-l-1/2}\chi(\tau)\psi_0(x),
$$
where $\psi_0$ localizes near $0$, $\chi=1$ on
$(-\infty,-\lambda^{1/2}+\epsilon)$, supported in
$(-\infty,-\lambda^{1/2}+2\epsilon)$, $\chi'\leq 0$, and takes a
quantization $Q$ of this. Here it is useful to have $Q=x^{-l-1/2}Q_0$,
with $Q_0$ quantizing $\chi(\tau)\psi_0(x)$. While $Q$ is not a
3sc-operator, $Q\psi(H)\in\Psi_{3\Sc}^{-\infty,-l-1/2}$ and
$Q_0\psi(H)\in\Psi_{3\Sc}^{-\infty,0}$. We need to compute
$$
-i[Q\psi(H),H]=-i[x^{-l-1/2},H]Q_0\psi(H)-ix^{-l-1/2}[Q_0\psi(H),H].
$$
By the preceding calculation, the first term on the right hand side
has indicial operator (of decay order $-l+1/2$)
\begin{equation}\label{eq:weight-rad-commutator}
-(l+1/2) (\pa_\tau \hat H_{\ff})\widehat{Q_0\psi(H)}_{\ff,0}=-(l+1/2)
(\pa_\tau \hat H_{\ff})\chi(\tau)\psi(\hat H_{\ff});
\end{equation}
this is precisely extending the first term of Equation~(17.13) of
\cite{VasyAsterisque}, for in that setting $\pa_\tau \hat
H_{\ff}=2\tau$.

Since the indicial operator of the second term is
supported on $\supp\chi'$, thus away from eigenvalues of $\hat
H_{\ff}$, the arguments from the normal propagation of singularities
(including shrinking the support of the spectral cutoff) go through
for its contributions (giving a favorable sign). It remains to
follow \eqref{eq:weight-rad-commutator} through the computations. 
For this we can follow the argument of the proof of our general Mourre
estimate of Section~\ref{MourreEstGenCase}, keeping in mind that
$E\pa_\tau H^a E$ is compact due to the decay of $\hat V^1_a$ for our
2-body subsystems $a$ (so we are always in case (1) or (2) of Definition~\ref{def:microlocal-Mourre-loss}), as long as there exists
$\tilde M^a(\lambda)>0,$ for all subsystem eigenvalues $\lambda'\leq \lambda$ and
corresponding projectors $E$ of $\hat H_a$, so that
$$
E\sign(\tau)(\pa_\tau\hat H_a)(\tau) E\geq \tilde M^a(\lambda) E,\qquad \lambda-\lambda'=\tau^2.
$$ The only difference between $\tilde M^a(\lambda)$ and
$M^a(\lambda)$ is that the latter has an extra factor of $\tau$ in its
definition, i.e.\ $\tau \pa_\tau\hat H_a$. For $\lambda>0$,
$\lambda-\lambda'=\tau^2$, as $\lambda'<0$ and bounded below, we see
that $\tau$ is bounded above and below by positive constants.  Hence,
as long as the ideal Mourre constant in all 2-body subsystems is
positive, it follows that $\tilde M^a(\lambda)>0$.
This proves Theorem~\ref{thm:non-product-prop-sing-WG}.

Thus, even in this
general case, under the hypotheses of
Theorem~\ref{thm:non-product-prop-sing-WG}, if the weak microlocal
radiation condition
$$
\WF_{3\Sc}^{*,-1/2+\delta}(u)\cap(R^+_{k^2}\cup
R^+_{k^2}\restrictedto_{\ff_j})=\emptyset,\ \delta>0,
$$
holds,
then the strong microlocal radiation condition
$$
\WF_{3\Sc}(u)\subset R^-_{k^2}\cup
R^-_{k^2}\restrictedto_{\ff_j}
$$
also
holds, and if $u$ is known to be in a
weighted Sobolev space, with weight strictly larger than $-1/2,$ then
$\WF_{3\Sc}(u)=\emptyset$, i.e.\ $u$ is Schwartz.
} \Bk

\section{Identities from Geometry}\label{app1}
These identities are used to prove~\eqref{eqn32}. If $v_{\alpha}$ is the vector
field dual to the 1-form, $\alpha,$ w.r.t.  the Euclidean metric on $\bbR^3,$ then we let
$\alpha\lrcorner\beta=i_{v_{\alpha}}\beta.$ With this understood we have
\begin{enumerate}
\item 
For $\alpha$ a 1-form, and $\beta$ 1 or 3 form,
\begin{equation}
  \begin{split}
    \alpha\lrcorner \beta&=\star(\alpha\wedge\star\beta)\\
    \alpha\lrcorner \star\beta&=-\star(\alpha\wedge\beta).
  \end{split}
\end{equation}
\item For $\alpha$ a 1-form, and    $\beta$ a 2-form,
\begin{equation}
  \begin{split}
    \alpha\lrcorner \beta&=-\star(\alpha\wedge\star\beta)\\
    \alpha\lrcorner \star\beta&=\star(\alpha\wedge\beta).
  \end{split}
\end{equation}
\item These formul{\ae} imply that for $\alpha$ a 1-, 2- or 3-form and $f$ a function
  \begin{equation}
    d^*(f\alpha)=fd^*\alpha-df\lrcorner\alpha.
  \end{equation}
\item For $\alpha,\beta$ 1-forms,
  \begin{equation}
    \alpha\lrcorner[\alpha\wedge\beta]=[\alpha\lrcorner\alpha]\beta-\alpha[\alpha\lrcorner\beta].
    \end{equation}
\item For a 1-form, $\alpha,$ and  $\beta,$ a $k$-form, Cartan's formula for the Lie derivative is
\begin{equation}
  \cL_{\alpha}\beta=\cL_{v_{\alpha}}\beta=d [i_{v_{\alpha}}\beta]+i_{v_{\alpha}}d\beta.
\end{equation}
Note that for a $\cC^1$-function, $f,$ we have
$$\cL_{f\alpha}\beta=f\cL_{\alpha}\beta+df\wedge i_{v_{\alpha}}\beta.$$
\item For a 1-form, $\beta,$
\begin{equation}
  \star\cL_{\alpha}\star\beta=\cL_{\alpha}\beta-[\beta\,d^*\alpha+\beta\lrcorner d\alpha].
\end{equation}
\end{enumerate}

\section{Acknowledgments}
CLE would like to thank Leslie Greengard, Alex Barnett and Manas Rachh
for very helpful conversations about the material in this paper.

\end{document}